\newcommand\at[2]{\left.#1\right|_{#2}} 
\begin{document}

\title{Mean Field Games with a Dominating Player}

\author{A. Bensoussan 
		\and M. H. M. Chau 
		\and S. C. P. Yam
}
\date{Received: date / Accepted: date}
\institute{A. Bensoussan \at
International Center for Decision and Risk Analysis,Jindal  School of Management, The University of Texas at Dallas\\
Department of Systems Engineering and Engineering Management, College of Science and Engineering, City University of Hong Kong\\
\email{axb046100@utdallas.edu}           
           \and
M. H. M. Chau, S. C. P. Yam\at
Department of Statistics, The Chinese University of Hong Kong\\
\email{michaelchaumanho@gmail.com, scpyam@sta.cuhk.edu.hk}
}
\newtheorem{thm}{Theorem}[section]
\newtheorem{cor}[thm]{Corollary}
\newtheorem{defn}[thm]{Definition}
\newtheorem{lem}[thm]{Lemma}
\newtheorem{prop}[thm]{Proposition}
\newtheorem{pblm}[thm]{Problem}
\spnewtheorem*{rk}{Remark}{\bf}{\it}

\setcounter{section}{0}
\maketitle

\begin{abstract}
\noindent In this article, we consider mean field games between a dominating player and a group of representative agents, each of which acts similarly and also interacts with each other through a mean field term being substantially influenced by the dominating player. We first provide the general theory and discuss the necessary condition for the optimal controls and equilibrium condition by adopting adjoint equation approach. We then present a special case in the context of linear-quadratic framework, in which a necessary and sufficient condition can be asserted by stochastic maximum principle; we finally establish the sufficient condition that guarantees the unique existence of the equilibrium control. The proof of the convergence result of finite player game to mean field counterpart is provided in Appendix.\\

{\noindent \textit{Keywords}: Mean field games; Dominating player; Wasserstein Metric; Adjoint equation approach/Stochastic maximum principle; Stochastic Hamilton-Jacobi-Bellman equations; Linear quadratic; Separation principle; Banach fixed point theorem.}
\end{abstract}

\section{Introduction}
For long, modeling the joint interactive behaviour of individual objects(agents) in a large population in various dynamic systems has been one of the major problems. For instance, physicists often apply the traditional variational methods in Lagrangian and/or Hamiltonian mechanics to study interacting particle systems, which left a shortcoming of extremely high computational cost that made this microscopic approach almost mathematically intractable. To resolve this matter, a completely different macroscopic approach from statistical physics had been gradually developed, which eventually leads to the primitive notion of mean field theory. The novelty of this approach is that particles interact through a medium, namely the mean field term, which aggregates by action  and reaction on  other particles. Moreover, by passing the number of particles to the infinity in these macroscopic models, the mean field term become a functional of the density function which represents the whole population of particles. This leads to mathematical problems of much less computational complexity. 
 	
	\smallskip
	
From the economic perspective, due to the dramatic population growth and rapid urbanization, urgent needs of in-depth understanding of collective strategic interactive behavior of a huge group of decision makers is crucial in order to maintain a sustainable economic growth. Since the vector of fair prices is determined by both demand and supply, it is natural to utilize the aggregation effect from the players' states as a canonical candidate of mean-field term, and then we employ the mean-field models in place of the corresponding classical equilibrium models; moreover, as the decision makers control the evolution of a dynamic system, it is necessary to also incorporate the theory of stochastic differential games (SDGs) in these mean-field models. Over the past few decades, the theory of SDGs has been a major research topic in control theory and financial economics, especially in studying the continuous-time decision making problem between non-cooperative investors; in regard to the one-dimensional setting the theory of two person zero-sum games is quite well-developed via the notion of viscosity solutions, see for example Fleming and Souganidis (1989). Unfortunately, most interesting SDGs are $N$-player non-zero sum SDGs; see Bensoussan and Frehse~\cite{BF1,BF2} and Bensoussan et al.~\cite{BFV}, yet there are still relatively few results in the literature.
	
	\smallskip
			
As a macroscopic equilibrium model,  et al.~\cite{HCM1,HCM2} investigated stochastic differential game problems involving infinitely many players under the name ``Large Population Stochastic Dynamic Games''; and independently, Lasry and Lions~\cite{LL1,LL2,LL3} studied similar problems from the viewpoint of the mean-field theory in physics and termed ``Mean-Field Games (MFGs)''. As an organic combination of mean field theory and theory of stochastic differential games, MFGs provide a more realistic interpretation of individual dynamics at the microscopic level, so that each player  will be able to  optimize his prescribed objectives, yet with the mathematical tractability in a macroscopic framework. To be more precise, the general theory of MFGs has been built by combining various consistent assumptions on the following modeling aspects: (1) a continuum of players; (2) homogeneity in strategic performance of players; and (3) social interactions through the impact of mean field term. The first aspect is describing the approximation of a game model with a huge number of players by a continuum one yet with a sufficient mathematical tractability. The second aspect is assuming that all players obey the same set of rules of the interactive game, which provide guidance on their own behavior that potentially leads them to optimal decisions. Finally, due to the intrinsic complexity of the society in which the players participate in, the third aspect is explaining the fact that each player is so negligible and can only affect others marginally through his own infinitesimal contribution to the society. In a MFG, each player will base his decision making purely on his own criteria and certain summary statistics (that is, the mean field term) about the community; in other words, in explanation of their interactions, the pair of personal and mean-field characteristics of the whole population is already sufficient and exhaustive. Mathematically, each MFG will possess the following forward-backward structure: (1) a forward dynamic describes the individual strategic behavior; (2) a backward equation describes the evolution of individual optimal strategy, such as those in terms of the individual value function via the usual backward recursive techniques. For the detail of the derivation of this system of equations with forward-backward feature, one can consult from the works of Huang et al.~\cite{HCM2}, Lasry and Lions~\cite{LL1,LL2,LL3} and Bensoussan et al.~\cite{BFY}.
	
	\smallskip
	
In this article, we consider a class of MFG problems, in which there is a `significantly big' player playing together with a huge group of `small' players. The first work along this direction under a Linear Quadratic setting has been investigated by Huang \cite{H0}. In their following work~\cite{HN1}, the authors regard the mean field term, represented by the conditional expectation of the small agent, as exogenous to the whole control problem for both the big (the authors called it,`major') and small (minor) players. Nourian and Caines~\cite{NC} consider a similar problem under a generalized framework. However, the authors also consider the mean field term, which is represented by a conditional probability measure, as exogenous to the control problem for the major player. In contrast, we here consider the mean field term as endogenous for the big (we rephrase as `dominating' in order to emphasize our distinction from the previous works) player. That is to say, changes in the control of the big (dominating) player would directly affect and even essentially determine the mean field term. Our present setting appears to be natural in the economic literature related to `actual' governance, as the governor can often take up the initiative or key role on setting up rubrics and regulations to be followed by citizens. To avoid ambiguity, we here regard the `dominating' major player as a ``Dominating Player'', and all other minor players as ``Representative Agents'' throughout the whole paper. In our work, we assume that this dominating player can influence  both the mean field term and representative agents directly. We first discuss the necessary condition for the optimality under the most general setting in which both the state coefficients and the objective functions are sufficiently regular (e.g. differentiable); we then consider the Linear-Quadratic case by applying the results obtained in the general theory, which results in three adjoint equations. It is noted that Huang et al.~\cite{HN2} also considered the non-stationary case and obtained the intermediary result with only two adjoint equations, which represents a particular case of our present theory. Besides, concerning the related fixed point issue in any standard MFG problem in order to achieve the equilibrium strategy, we here only need to involve one single affine map, that simplifies much than that in \cite{HN2}, in which the authors need a couple of two similar mappings; apart from the simplicity of the sufficient condition provided here, it is also directly expressed in terms of the data (coefficients) of the underlying model. 

The paper is organized as follows: In Section \ref{gen}, we present the general theory of the Mean Field Games in the presence of a dominating Player, in which both the state coefficients and the objective functions are sufficiently regular. The necessary condition for  optimality and equilibrium is also provided there. Firstly, solving for the control problem of the representative agent, and then the equilibrium condition leads to a coupled Hamilton-Jacobi-Bellman and Fokker Planck equations. As the mean field term is endogenous to the dominating Player, in order to achieve an optimal control, he/she should take into account of the coupled equations when deciding his own controlling strategy. The related fixed point problem is described by six equations. In Section \ref{lqcase}, we study a special case with linear states together with linear quadratic objective functions. Due to natural coerciveness of the problem formulation, a necessary and sufficient condition for the optimality can be guaranteed. We write down both the stochastic maximum principle and the corresponding adjoint equations. In Section \ref{fxpt}, the corresponding fixed point problem is then tackled by considering the related Riccati equation, with which the equilibrium could be achieved. We then provide a `practical' sufficient condition, which only involves the data (coefficients) of the model without referring to any specific solution of any Riccati equations, for the existence of the equilibrium strategy. In Appendix, proof of the approximate Nash equilibrium for the general setting is also provided. 

\section{General Theory}\label{gen}
Consider a probability space $ (\Omega, \mathcal{F}, P) $, a fixed terminal time $T$ and two independent standard Brownian motion $W_0(t)$ and $W_1(t)$ taking values in $ \mathbb{R}^{d_0}$ and $\mathbb{R}^{d_1}$ respectively. Also consider two independent initial square integrable random variables $ \xi_0 \in \mathbb{R}^{n_0}$ and $\xi_1 \in \mathbb{R}^{n_1}$,  which are also assumed to be independent of both $W_0(t)$ and $W_1(t)$. 
Define the filtrations as follows, in which $ \mathcal{F}_t^0$  and  $ \mathcal{F}_t^1 $ are clearly independent to each other,
		\begin{equation*}
		\begin{split}
			\mathcal{F}_t^0& := \sigma(\xi_0, W_0(s), s \leq t) , \\
			\mathcal{F}_t^1 &:= \sigma(\xi_1, W_1(s), s \leq t) , \\
			\mathcal{G}_t &:= \mathcal{F}_t^0 \vee  \mathcal{F}_t^1 .
		\end{split}
		\end{equation*}
Let $\mathcal{P}_2(\mathbb{R}^{n_1})$ be the space of probability measures equipped with the $2^{nd}$ Wasserstein metric (for example, see \cite{villani}), $W_2(\cdot,\cdot)$ such that for any $\mu$ and $\nu$ in $\mathcal{P}_2(\mathbb{R}^{n_1})$,
\begin{equation*}
	W_2(\nu_1,\nu_2):=\inf_{\gamma\in\Gamma(\nu_1,\nu_2)}\bigg(\int_{\mathbb{R}^n\times\mathbb{R}^n}|x-y|^2d\gamma(x,y)\bigg)^{\frac{1}{2}},
\end{equation*}
where the infimum is taken over the family $\Gamma(\nu_1,\nu_2)$, the collection of all joint measures with respective marginals $\nu_1$ and $\nu_2$. Denote $d\lambda$ to be the Lebesgue measure on $\mathbb{R}^{n_1}$.
 
Denote $x_0(t)\in \mathbb{R}^{n_0}$ and $x_1(t)\in \mathbb{R}^{n_1}$ the state evolutions for the dominating player and a representative agent respectively whose dynamics are given by the following stochastic differential equations (SDEs),
		\begin{equation}
			\label{SDE}
			\begin{split}
			\left\{
			\begin{array}{rcl}
				dx_0 &=& g_0\Big(x_0(t),\mu(t),u_0(x_0(t),t)\Big)dt+\sigma_0\Big(x_0(t)\Big)dW_0(t) ,\\
				x_0(0) &=& \xi_0 .\\
				dx_1 &=&g_1\Big(x_1(t),x_0(t),\mu(t),u_1(x_1t)\Big)dt+\sigma_1\Big(x_1(t)\Big)dW_1(t) ,\\
				x_1(0) &=& \xi_1.
			\end{array}
			\right.
			\end{split}
		\end{equation}	

The functional coefficients are defined as follows:
\begin{equation}
\label{funcoef}
\left\{
\begin{array}{rl}
	&g_0:\mathbb{R}^{n_0} \times \mathcal{P}_2(\mathbb{R}^{n_1}) \times \mathbb{R}^{m_0} \rightarrow \mathbb{R}^{n_0} ,\\
	&g_1:\mathbb{R}^{n_1} \times \mathbb{R}^{n_0} \times \mathcal{P}_2(\mathbb{R}^{n_1}) \times \mathbb{R}^{m_1} \rightarrow \mathbb{R}^{n_1} ,\\
	&\sigma_0:\mathbb{R}^{n_0}\rightarrow \mathbb{R}^{n_0 \times d_0} ,\\
	&\sigma_1:\mathbb{R}^{n_1}\rightarrow \mathbb{R}^{n_1 \times d_1}. \\
\end{array}	
\right.
\end{equation}
The dominating player and the representative agents also possess the following objective functionals respectively:
	\begin{equation*}
	\begin{array}{rcl}
	J_0(u_0) &=& \mathbb{E}\left[\int_{0}^{T}f_0\Big(x_0(t),\mu(t),u_0(t)\Big)dt+h_0\Big(x_0(T),\mu(T)\Big)\right] ,\\
	J_1(u_1, x_0, \nu) &=& \mathbb{E}\left[\int_{0}^{T}f_1\Big(x_1(t),x_0(t),\mu(t),u_1(t)\Big)dt+h_1\Big(x_1(T),x_0(T),\mu(T)\Big)\right].
	\end{array}
	\end{equation*}
The functions are defined as follows:
	\begin{equation}
	\left\{
	\begin{array}{rl}
		&f_0:\mathbb{R}^{n_0} \times \mathcal{P}_2(\mathbb{R}^{n_1}) \times \mathbb{R}^{m_0} \rightarrow \mathbb{R},\\
		&f_1:\mathbb{R}^{n_1} \times \mathbb{R}^{n_0} \times \mathcal{P}_2(\mathbb{R}^{n_1}) \times \mathbb{R}^{m_1} \rightarrow \mathbb{R},\\
		&h_0:\mathbb{R}^{n_0} \times \mathcal{P}_2(\mathbb{R}^{n_1}) \rightarrow \mathbb{R} ,\\
		&h_1:\mathbb{R}^{n_1} \times \mathbb{R}^{n_0} \times  \mathcal{P}_2(\mathbb{R}^{n_1}) \rightarrow \mathbb{R}. \\
	\end{array}	
	\right.
	\end{equation}

Here $u_0\in \mathbb{R}^{m_0} $ and $u_1\in \mathbb{R}^{m_1} $ represent the respective controls of the dominating player and the representative agent. The controls $u_0$ and $u_1$ are respectively adapted to the filtrations $ \mathcal{F}_t^0$ and $ \mathcal{G}_t$. We further assume that the functional form (being a function of $(x_1(t),t)$) of $u_1$ is adapted to $ \mathcal{F}_t^0$ and uniformly Lipschitz in $x_1(t)$, even though its value evaluated at $x_1(t)$ would be adapted to $ \mathcal{G}_t$ instead. Loosely speaking the dominating player takes his own privilege of setting up the framework to be followed by the representative agent. We shall then define the classes of admissible controls for the dominating player and the representative agent by $\mathcal{A}_0$ and $\mathcal{A}_1$ respectively, where $\mathcal{A}_0$ (resp. $\mathcal{A}_1$) is a subset of $\mathcal{F}^0-$ (resp. $\mathcal{G}-$)progressively measurable process which are in $\mathcal{L}^2(\Omega\times[0,T];\mathbb{R}^{m_0})$ (resp. $\mathcal{L}^2(\Omega\times[0,T];\mathbb{R}^{m_1})$).

The mean field term, $\mu(t)\in\mathcal{P}_2(\mathbb{R}^{n_1})$, is the probability measure of the state of the representative agent at time $t$. Indeed, the dominating player sets rules for representative agent to take into account. One natural consideration is that the dominating player is incapable of tracing the state of each individual's evolution,  but only takes account of the overall performance of the community subject to the rules he set, that is his own flow of information, $\mathcal{F}^0_t$. By the same token, each agent cannot fully keep track of any other agents' states and they can only rely on the summarized information of the community provided by the dominating player. Thus it is justifiable to assume that the mean field term, $\mu(t)$, is adapted to $\mathcal{F}^0_t$.
The dominating player can directly influence  both the representative agent and the mean field term, thus we consider $\mu(t)$ as endogenous in the consideration of optimal behavior of $x_0(t)$ rather than as an exogenous variable commonly found in the literature such as that of \cite{H0,NC}.

For any probability measure $\mu\in\mathcal{P}_2(\mathbb{R}^{n_1})$, we write $M_2(\mu)=(\int_{\mathbb{R}^{n_1}} |x|^2 d\mu(x))^{\frac{1}{2}}$. We first give the following assumptions on the functional coefficients:

{\it (A.1) Lipschitz Continuity}\\
$g_0$, $\sigma_0$, $g_1$ and $\sigma_1$ are globally Lipschitz continuous in all arguments. In particular, there exists $K>0$, such that 
\begin{equation*}
\begin{array}{rcl}
|g_0(x_0,\mu,u_0)-g_0(x_0',\mu',u_0')|&\leq&  K\bigg(|x_0-x_0'|+W_2(\mu,\mu')+|u_0-u_0'|\bigg);\\
|\sigma_0(x_0)-\sigma_0(x_0')|&\leq& K|x_0-x_0'|.\\				
|g_1(x_1,x_0,\mu,u_1)-g_1(x_1',x_0',\mu',u_1')|&\leq&  K\bigg(|x_1-x_1'|+|x_0-x_0'|+W_2(\mu,\mu')+|u_1-u_1'|\bigg);\\
|\sigma_1(x_1)-\sigma_1(x_1')|&\leq& K|x_1-x_1'|.\\
\end{array}
\end{equation*}
{\it (A.2) Linear Growth}\\
$g_0$, $\sigma_0$, $g_1$ and $\sigma_1$ are of linear growth in all arguments. In particular, there exists $K>0$, such that
\begin{equation*}
\begin{split}
|g_0(x_0,\mu,u_0)|&\leq K(1+|x_0|+M_2(\mu)+|u_0|);\\
|\sigma_0(x_0)|&\leq K(1+|x_0|).\\
|g_1(x_1,x_0,\mu,u_1)|&\leq K(1+|x_0|+|x_1|+M_2(\mu)+|u_1|);\\
|\sigma_1(x_1)|&\leq K(1+|x_1|).\\
\end{split}
\end{equation*}
{\it (A.3) Quadratic Condition on the Cost Functional} (See (A.5) in Carmona and Delarue \cite{CAR_PROB}.)\\
There exists $K>0$, such that
\begin{equation}
\label{A4_0}
\begin{split}
|f_1(x_1,x_0,\mu,u_1)-f_1(x_1',x_0',\mu',u_1')|\leq &K\Big[1+|x_1|+|x_1'|+|x_0|+|x_0'|+|M_2(\mu)|+|M_2(\mu')|+|u_1|+|u_1'|\Big]\\
&\qquad\cdot\Big[|x_1-x_1'|+|x_0-x_0'|+W_2(\mu,\mu')+|u_1-u_1'|\Big];\\
|h_1(x_1,x_0,\mu)-h_1(x_1',x_0',\mu')|\leq & K\Big[1+|x_1|+|x_1'|+|x_0|+|x_0'|+|M_2(\mu)|+|M_2(\mu')|\Big]\\
&\qquad\cdot\Big[|x_1-x_1'|+|x_0-x_0'|+W_2(\mu,\mu')\Big].
\end{split}
\end{equation}	
Under the assumptions {\it A.1-A.3}, we show in the Appendix that if we have the mean field term coincides with the probability measure of $x_1(t)$ conditioning on $\mathcal{F}^0_t$, then the optimization problem for the representative agent constitutes to a Mean Field Game. In general, it is more convenient to compare two probability measures if they possess density functions on $\mathbb{R}^{n_1}$. We define the second order operator $A_1$ and its adjoint $A_1^*$ by
\begin{equation*}
\begin{split}
A_1 \varphi (x,t) &=-\text{tr}\Big(a_1(x)D^2\varphi(x,t)\Big) , \\
A_1^{*}\varphi(x,t)&=-\sum_{i,j=1}^{n_1}\frac{\partial^{2}}{\partial{x_{i}}\partial{x_{j}}}\Big(a_1^{ij}(x)\varphi(x,t)\Big),
\end{split}
\end{equation*}
where $a_1(x)=\frac{1}{2}\sigma_1(x)\sigma_1(x)^*$ is a positive definite matrix.
Let $x_1=x_1^{u_1}$ be the solution of the SDE for the representative agent with respect to control $u_1$. For any test function $f$, by It\^o's lemma,
\begin{equation}
	\mathbb{E}^{\mathcal{F}^0_t}[f(x_1^{u_1}(t))]=\mathbb{E}^{\mathcal{F}^0_t}\bigg[f(\xi_1)+\displaystyle\int_0^t \big(\partial_t+g\cdot D+A_1\big)f ds\bigg],
\end{equation}
The conditional density function $p^{u_1}(\cdot,t)$ of $x_1^{u_1}(t)$ (if exists), i.e. $\mathbb{E}^{\mathcal{F}^0_t}[f(x_1^{u_1}(t))]=\int_{\mathbb{R}^{n_1}}f(x)p^{u_1}(x,t)dx$, would be given by the Fokker-Planck (FP) equation
\begin{equation}\label{FPEQ}
\begin{split}
	\left\{
	\begin{array}{rl}
		\dfrac{\partial p^{u_1}}{\partial t}&=-A_1^{*}p^{u_1}(x,t)-\text{div}\Big(g_1\Big(x,x_{0}(t),\mu(t),u_1(x,t)\Big)p^{u_1}(x,t)\Big), \\
		p^{u_1}(x,0) &=\omega(x),
	\end{array}
	\right.
\end{split}
\end{equation}
where $\omega(x)$ is the initial density function of $\xi_1$. We will justify the existence and regularities of the conditional density function $p^{u_1}$ later. We first assume $p^{u_1}(\cdot,t)\in\mathcal{L}^2(\mathbb{R}^{n_1})$ and $p^{u_1}(\cdot,t)d\lambda\in\mathcal{P}_2(\mathbb{R}^{n_1})$. For any density $m$, we may write $m d\lambda=m$ if no ambiguity arises. We impose further assumptions on the functional coefficients:

{\it (A.4)}\\
	$g_0, f_0, h_0, g_1, f_1$ and $h_1$ are continuously differentiable in (if the argument exist) $x_0\in\mathbb{R}^{n_0}$, $x_1\in\mathbb{R}^{n_1}$ $u_0\in\mathbb{R}^{m_0}$, $u_1\in\mathbb{R}^{m_1}$ with bounded derivatives. We will denote, for example, the derivative of $g_0$ with respect to $x_0$ by $g_{0,x_0}$. They are also G\^ateaux differentiable in $\mu=md\lambda\in\mathcal{P}_2(\mathbb{R}^{n_1})$, for example, for $m\in\mathcal{L}^2(\mathbb{R}^{n_1})$,
		\begin{equation*}
			\at{\dfrac{d}{d\theta}}{\theta=0}g_0(x_i,(m+\theta \tilde m)d\lambda,u_i)=\int_{\mathbb{R}^n}\frac{\partial g_0}{\partial m}(x_i,md\lambda,u_i)(\xi)\tilde m(\xi) d\xi,
		\end{equation*}
		for some $\frac{\partial g_0}{\partial m}(x_i,md\lambda,u_i)\in\mathcal{L}^2(\mathbb{R}^{n_1})$.
		
{\it (A.5)}\\
	$\sigma_0$ (resp. $\sigma_1$) is twice continuously differentiable in $x_0\in\mathbb{R}^{n_0}$ (resp. $x_1\in\mathbb{R}^{n_1}$) with bounded first order and second order derivative. 
\begin{rk}
	With the regularities on the coefficients, if we have the initial density $\omega(x)\in\mathcal{L}^2\cap\mathcal{L}^{\infty}(\mathbb{R}^{n_1})$, then the FP equation (\ref{FPEQ}) admits a unique solution $p^{u_1}\in \mathcal{L}^{\infty}([0,T],\mathcal{L}^2\cap\mathcal{L}^{\infty}(\mathbb{R}^{n_1}))$. See Proposition $4$ and $5$ in Le Bris and Lions \cite{BL} for details.
\end{rk}
Define then a pair of mutually dependent control problems for the dominating player and the representative agent as below:
\begin{pblm}{\bf Control of Representative Agent}\label{P1} \\
	Given the process $x_0$ and an exogenous probability measure-valued process $\nu$ (adapted to $\mathcal{F}^0_t$), find a control $u_1 \in \mathcal{A}_1$ which minimizes the cost functional
	\begin{equation}
	\label{J_1}
		J_1(u_1, x_0, \nu) := \mathbb{E}\left[\int_{0}^{T}f_1\Big(x_1(t),x_0(t),\nu(t),u_1(t)\Big)dt+h_1\Big(x_1(T),x_0(T),\nu(T)\Big)\right].
	\end{equation}
\end{pblm}
\begin{pblm}{\bf Equilibrium Condition}\label{P2} \\
	Given an exogenous probability measure-valued process $\nu$, let $\mathcal{M}(\nu)(t)$
	be the measure induced by the corresponding optimal $x_1(t)$ found in Problem \ref{P1} conditioning on $\mathcal{F}_t^0$. Find the probability measure-valued process $\mu$ such that the fixed point property is satisfied: $\mathcal{M}(\mu)(\cdot)=\mu(\cdot)$.\\
\end{pblm}
\begin{pblm}{\bf Control of the Dominating Player}\label{P3} \\
	Find a control $u_0 \in \mathcal{A}_0$ which minimizes the cost functional \\
	\begin{equation}
		J_0(u_0) := \mathbb{E}\left[\int_{0}^{T}f_0\Big(x_0(t),\mu(t),u_0(t)\Big)dt+h_0\Big(x_0(T),\mu(T)\Big)\right] ,
	\end{equation} 
	where $\mu$ is the solution given in Problem \ref{P2}.
\end{pblm}

\begin{rk}
The setting of our problem is different from those mean field related problems with a major player (not a dominating player) commonly found in the literature, such as that in \cite{H0,NC}. For example, in \cite{H0}, the corresponding objective functions for the major player and i-th minor player are respectively
\begin{equation*}
\begin{array}{rcl}
	J_0(u_0,z)&=&\mathbb{E}\int_0^T \left\{| x_0-H_0 z-\eta |^2_{Q_0}+u_0^*R_0 u_0 \right\}dt, \\
	J_i(u_i,z)&=&\mathbb{E}\int_0^T \left\{| x_i-H x_0-\hat{H}_0 z-\eta |^2_{Q}+u^*R u \right\}dt, \\
\end{array}
\end{equation*}
where the mean field term $z$ is exogenous to both control optimization problems for $J_0$ and $J_i$. Instead, we here consider the mean field term $\nu$, as established in Problem \ref{P2}, as endogenous for the dominating player in Problem \ref{P3}. In particular, changes in control $u_0$ would affect and even completely determine the mean field term $\nu$ accordingly. Our setting appears to be natural in the economic context related to governance, as the governor can sometimes take the initiative to set-up rubrics to be obeyed and followed by citizens; this latter notion is  covered in \cite{ORIGINCRISIS}. 
\end{rk}

We first establish the necessary condition of optimality for the representative agent (Problem \ref{P1}) by the adjoint equation approach. After resolving Problem \ref{P1}, we solve for the fixed point in Problem \ref{P2}. Recall that $x_0(t)$, the functional form of $u_1$ (conditioning on $\mathcal{F}_0^t$, $u_1$ is a function of $(x_1(t),t)$) and now together with the input measure-valued process $\nu$ are all adapted to $\mathcal{F}_t^0$. We can then rewrite the cost functional (\ref{J_1}) for the representative agent as

\begin{equation*}
\begin{split}
	J_1(u_1, x_0, \nu) =&\mathbb{E}\bigg[\int_{0}^{T}\mathbb{E}^{\mathcal{F}_t^0}f_1\Big(x_1(t),x_0(t),\nu(t),u_1(x_1(t),t)\Big)dt+\mathbb{E}^{\mathcal{F}_T^0}h_1\Big(x_1(T),x_0(T),\nu(T)\Big)\bigg] \\
	=&\mathbb{E}\bigg[\int_{0}^{T}\int_{\mathbb{R}^{n_1}}p_{u_1}(x,t)f_1\Big(x,x_{0}(t),\nu(t),u_1(x,t)\Big)dxdt +  \int_{\mathbb{R}^n}p_{u_1}(x,T)h_1\Big(x,x_{0}(T),\nu(T)\Big)dx\bigg].
\end{split}
\end{equation*}

\begin{lem} {\bf (Necessary condition for Problem \ref{P1})} \\
	Given $x_0$ and $\nu$ as in Problem \ref{P1}, the control $\hat{u}_1 \in \mathcal{A}_1$ is optimal only if it satisfies the following SHJB:\\ 
	\begin{equation}
	\left\{
	\begin{array}{rl}
		\label{SHJB}
		-\partial_{t}\Psi=& \Big(H_1(x,x_0(t),\nu(t),D\Psi(x,t))-A_1\Psi(x,t)\Big)dt  -K_{\Psi}(x,t)dW_0(t), \\
		\Psi(x,T)  =&h_1\Big(x,x_{0}(T),\nu(T)\Big),\\
	\end{array}
	\right.
	\end{equation}
	where
	\begin{equation*}
	\begin{split}
		H_1(x,x_0,\nu,q)&=\inf_{u_1} L(x,x_0,\nu,u_1,q),\\
		L(x,x_0,\nu,u_1,q)&=f_1(x,x_0,\nu,u_1)+qg_1(x,x_0,\nu,u_1).\\
	\end{split}
	\end{equation*}
	and the infimum is uniquely attained at $\hat u_1$, i.e. $H_1(x,x_0,\nu,q)=L(x,x_0,\nu,\hat u_1,q)$.
\end{lem}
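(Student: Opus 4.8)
The plan is to establish this necessary condition by a variational (adjoint-equation) argument, treating the conditional density $p^{u_1}$ governed by the Fokker--Planck equation (\ref{FPEQ}) as the genuine state of Problem \ref{P1} and $\Psi$ as the associated co-state. First I would fix a candidate optimizer $\hat u_1$, write $\hat p:=p^{\hat u_1}$, take an admissible perturbation $u_1^\theta=\hat u_1+\theta v$ with $v\in\mathcal{A}_1$, and differentiate the density-form of the cost functional displayed just above the statement at $\theta=0$. Linearising (\ref{FPEQ}) shows that the Gâteaux derivative $\tilde p:=\frac{d}{d\theta}\big|_{\theta=0}p^{u_1^\theta}$ solves the forward equation
\begin{equation*}
\partial_t\tilde p=-A_1^{*}\tilde p-\text{div}\big(g_1(\hat u_1)\,\tilde p\big)-\text{div}\big(g_{1,u_1}(\hat u_1)\,v\,\hat p\big),\qquad \tilde p(\cdot,0)=0,
\end{equation*}
so that (writing $f_1=f_1(x,x_0,\nu,\hat u_1)$, etc.)
\begin{equation*}
\frac{d}{d\theta}\Big|_{\theta=0}J_1=\mathbb{E}\bigg[\int_0^T\int_{\mathbb{R}^{n_1}}\big(\tilde p\,f_1+\hat p\,f_{1,u_1}\,v\big)\,dx\,dt+\int_{\mathbb{R}^{n_1}}\tilde p(x,T)\,h_1\,dx\bigg].
\end{equation*}

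Next I would introduce the co-state $\Psi$ as the solution of the backward stochastic PDE (\ref{SHJB}) with the control frozen at $\hat u_1$, i.e.\ with generator $L(x,x_0,\nu,\hat u_1,D\Psi)-A_1\Psi$; because the data $x_0,\nu$ are $\mathcal{F}^0_t$-adapted, the terminal value $h_1(x,x_0(T),\nu(T))$ is $\mathcal{F}^0_T$-measurable, and the martingale term $K_\Psi\,dW_0$ is precisely what renders $\Psi(\cdot,t)$ adapted. The core step is the duality computation: apply It\^o's product rule to $t\mapsto\int\Psi(x,t)\,\tilde p(x,t)\,dx$. Since $\tilde p$ carries no $dW_0$-martingale part (the conditional density evolves pathwise in the $\mathcal{F}^0$-realisation, as $W_1$ is independent of $\mathcal{F}^0$), the cross-variation vanishes; integrating by parts and using that $A_1^{*}$ is the formal adjoint of $A_1$ makes the $A_1$-terms and the $D\Psi\cdot g_1$-terms cancel. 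Using $\Psi(\cdot,T)=h_1$, $\tilde p(\cdot,0)=0$, and taking expectations to annihilate the stochastic integral, I would obtain
\begin{equation*}
\mathbb{E}\int_{\mathbb{R}^{n_1}} h_1\,\tilde p(x,T)\,dx=\mathbb{E}\int_0^T\int_{\mathbb{R}^{n_1}}\big(-\tilde p\,f_1+\hat p\,D\Psi\cdot g_{1,u_1}(\hat u_1)\,v\big)\,dx\,dt.
\end{equation*}

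Substituting this identity into the expression for the derivative cancels the two $\tilde p\,f_1$ contributions and leaves the clean first-order condition
\begin{equation*}
\frac{d}{d\theta}\Big|_{\theta=0}J_1=\mathbb{E}\int_0^T\int_{\mathbb{R}^{n_1}}\hat p\,\partial_{u_1}L\big(x,x_0,\nu,\hat u_1,D\Psi\big)\,v\,dx\,dt .
\end{equation*}
Optimality of $\hat u_1$ forces this Gâteaux derivative to vanish for every admissible direction $v$; since $\hat p\ge 0$ and $v$ is arbitrary, the fundamental lemma of the calculus of variations yields $\partial_{u_1}L(x,x_0,\nu,\hat u_1,D\Psi)=0$ wherever $\hat p>0$. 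Hence $\hat u_1$ attains the pointwise infimum, $H_1(x,x_0,\nu,D\Psi)=L(x,x_0,\nu,\hat u_1,D\Psi)$, so the frozen-control equation for $\Psi$ is exactly the nonlinear SHJB (\ref{SHJB}), completing the necessity.

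The main obstacle I anticipate is the analytic justification of these formal steps rather than the algebra. Three points need care: (i) well-posedness and spatial regularity of the backward stochastic PDE (\ref{SHJB}), including the existence of the martingale field $K_\Psi$ via a martingale-representation theorem relative to $W_0$, so that $\Psi$ is a genuine $\mathcal{F}^0_t$-adapted co-state with $D\Psi$ well defined; (ii) Gâteaux differentiability of the map $u_1\mapsto p^{u_1}$ and validity of the linearised equation for $\tilde p$; and (iii) enough decay and integrability of $\hat p,\tilde p,\Psi,D\Psi$ to legitimise the integrations by parts (no boundary contributions at infinity) and the interchange of differentiation, expectation, and integration. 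These rest on the $\mathcal{L}^2\cap\mathcal{L}^{\infty}$ regularity of the density quoted in the Remark following (A.5) and on the Lipschitz, growth and differentiability hypotheses (A.1)--(A.5); with those in hand the duality identity becomes exact and the conclusion is immediate.
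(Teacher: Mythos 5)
Your proposal is correct and follows essentially the same route as the paper's own proof: Gâteaux differentiation of the density-form cost, linearisation of the Fokker--Planck equation, a duality (Itô product) computation against the backward stochastic PDE with the control frozen at $\hat u_1$, and the conclusion $L_{u_1}(x,x_0,\nu,\hat u_1,D\Psi)=0$ from arbitrariness of the perturbation and non-negativity of $p^{\hat u_1}$. Your additional remarks on well-posedness, the martingale term $K_\Psi$, and the restriction of the first-order condition to the set where $\hat p>0$ are, if anything, slightly more careful than the paper's treatment.
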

\begin{rk}
	As in the work in Carmona et al. \cite{CAR_PROB}, one convenient set of assumptions on $g_1$,$f_1$ and $h_1$ which ensures the unique existence of the minimizer, $\hat u_1(x,x_0,\nu,q)={\arg\min}_u L(x,x_0,\nu,u,q)$, is the affine and convexity assumption. See Lemma $2.1$ in \cite{CAR_PROB} for more details. In particular, for $x_1\in\mathbb{R}^{n_1}; x_0\in\mathbb{R}^{n_0}; \mu\in \mathcal{P}_2(\mathbb{R}^{n_1})$ and $ u_1,u_1'\in\mathbb{R}^{m_1}$, there exists $K>0$ such that
	\begin{enumerate}
		\item{$g_1(x_1,x_0,\mu,u_1)=g_{1,1}(x_1,x_0,\mu)+g_{1,2}\cdot u_1$,}
		\item{$f_1(x_1',x_0,\mu,u_1') \geq f_1(x_1,x_0,\mu,u_1)+f^*_{1,x_1}(x_1,x_0,\mu,u_1)(x_1'-x_1)+f^*_{1,u_1}(x_1,x_0,\mu,u_1)(u_1'-u_1)+K|u_1'-u_1|^2$.}
	\end{enumerate}
	Moreover, the minimizer $(x,q)\mapsto \hat{u}_1(x,x_0,\mu,q)$ is Lipschitz continuous, uniformly in $(x_0,\mu)$. Similar conditions for $g_0$, $f_0$ and $h_0$ can be assumed to guarantee a unique minimizer for the Lagrangian of the control problem for the dominating player in Lemma \ref{LEMP3}.
\end{rk}
\begin{proof}
To express a necessary condition for optimality, we adopt the stochastic maximum principle. In particular, for any $\tilde u_1\in\mathcal{A}_1$,
\begin{equation}\label{1storder}
\begin{array}{rcl}
	0&=&\at{\dfrac{d}{d\theta}}{\theta=0}J_1(\hat u_1+\theta\tilde{u}_1,x_{0},\nu)\\
	&=&\at{\dfrac{d}{d\theta}}{\theta=0}\mathbb{E}\bigg[\displaystyle\int_{0}^{T}\int_{\mathbb{R}^{n_1}}p_{\hat u_1+\theta\tilde{u}_1}(x,t)f_1\Big(x,x_{0}(t),\nu(t),\hat u_1(x,t)+\theta\tilde{u}_1(x,t)\Big)dxdt \\
	&&\qquad\qquad\qquad  +\displaystyle\int_{\mathbb{R}^n}p_{\hat u_1+\theta\tilde{u}_1}(x,T)h_1\Big(x,x_{0}(T),\nu(T)\Big)dx\bigg]\\
	&=&\mathbb{E}\bigg[\displaystyle\int_{0}^{T}\int_{\mathbb{R}^{n_1}}\tilde{p}(x,t)f_1\Big(x,x_{0}(t),\nu(t),\hat u_1(x,t)\Big)+p_{\hat{u}_1}(x,t)f_{1,u_1}\Big(x,x_{0}(t),\nu(t),\hat u_1(x,t)\Big)\tilde{u}_1(x,t)dxdt \\
	&&\qquad\qquad\qquad  +\displaystyle\int_{\mathbb{R}^n}\tilde{p}(x,T)h_1\Big(x,x_{0}(T),\nu(T)\Big)dx\bigg],
\end{array}
\end{equation}
where $\tilde{p}=\at{\frac{d}{d\theta}}{\theta=0}p_{\hat{u}_1+\theta \tilde{u}_1}$. By taking derivative with respect to $\theta$ in the FP equation (\ref{FPEQ}), we have
\begin{equation*}
\begin{split}
	\left\{
	\begin{array}{rl}
		\dfrac{\partial \tilde{p}}{\partial t}&=-A_1^{*}\tilde{p}(x,t)-\text{div}\Big(\tilde{u}_1(x,t)g_{1,u_1}\Big(x,x_{0}(t),\mu(t),\hat{u}_1(x,t)\Big)p_{\hat{u}_1}(x,t)\Big)\\
		&\qquad\qquad-\text{div}\Big(g_1\Big(x,x_{0}(t),\mu(t),\hat{u}_1(x,t)\Big)\tilde{p}(x,t)\Big),\\
		\tilde{p}(x,0) &=0.
	\end{array}
	\right.
\end{split}
\end{equation*}
As an adjoint process, we consider the backward stochastic differential equation
\begin{equation*}
\left\{
\begin{array}{rcl}
	-\partial_{t}\Psi &=& \Big\{f_1\Big(x,x_{0}(t),\nu(t),u_1(x,t)\Big)+D\Psi(x,t)g_1\Big(x,x_{0}(t),\nu(t),u_1(x,t)\Big)-A_1\Psi(x,t)\Big\}dt\\
	&&\qquad-K_{\Psi}(x,t)dW_0(t) ,\\
	\Psi(x,T)  &=&h_1\Big(x,x_{0}(T),\nu(T)\Big) .
\end{array}
\right.
\end{equation*}
We consider the inner product
\begin{equation*}
\begin{array}{rcl}
	&&d\displaystyle\int_{\mathbb{R}^{n_1}} \tilde{p}(x,t) \Psi(x,t)dx\\
	&=&\displaystyle\int_{\mathbb{R}^{n_1}}\Big\{-A_1^{*}\tilde{p}(x,t)-\text{div}\Big(\tilde{u}_1(x,t)g_{1,u_1}\Big(x,x_{0}(t),\mu(t),\hat{u}_1(x,t)\Big)p_{\hat{u}_1}(x,t)\Big)\\
	&&\qquad\qquad-\text{div}\Big(g_1\Big(x,x_{0}(t),\mu(t),\hat{u}_1(x,t)\Big)\tilde{p}(x,t)\Big)\Big\}\Psi(x,t)dxdt\\
	&&-\displaystyle\int_{\mathbb{R}^{n_1}}\tilde{p}(x,t)\Big\{f_1\Big(x,x_{0}(t),\nu(t),u_1(x,t)\Big)+D\Psi(x,t)g_1\Big(x,x_{0}(t),\nu(t),u_1(x,t)\Big)-A_1\Psi(x,t)\Big\}dxdt\\
	&&+\displaystyle\int_{\mathbb{R}^{n_1}}\tilde{p}(x,t)K_{\Psi}(x,t)dxdW_0(t)\\
	&=&\displaystyle\int_{\mathbb{R}^{n_1}}\Big(\tilde{u}_1(x,t)g_{1,u_1}\Big(x,x_{0}(t),\mu(t),\hat{u}_1(x,t)\Big)p_{\hat{u}_1}(x,t)\Big)D\Psi(x,t)dxdt\\
	&&-\displaystyle\int_{\mathbb{R}^{n_1}}\tilde{p}(x,t)f_1\Big(x,x_{0}(t),\nu(t),u_1(x,t)\Big)dxdt+\displaystyle\int_{\mathbb{R}^{n_1}}\tilde{p}(x,t)K_{\Psi}(x,t)dxdW_0(t).\\	
\end{array}
\end{equation*}
Taking integration on $[0,T]$ and expectation on both sides yields
\begin{equation*}
\begin{array}{rcl}
	&&\mathbb{E}\bigg[\displaystyle\int_{\mathbb{R}^n}\tilde{p}(x,T)h_1\Big(x,x_{0}(T),\nu(T)\Big)dx\bigg]\\
	&=&\mathbb{E}\bigg[\displaystyle\int_0^T\displaystyle\int_{\mathbb{R}^{n_1}}\Big(\tilde{u}_1(x,t)g_{1,u_1}\Big(x,x_{0}(t),\mu(t),\hat{u}_1(x,t)\Big)p_{\hat{u}_1}(x,t)\Big)D\Psi(x,t)dxdt\\
	&&\qquad-\displaystyle\int_0^T\displaystyle\int_{\mathbb{R}^{n_1}}\tilde{p}(x,t)f_1\Big(x,x_{0}(t),\nu(t),u_1(x,t)\Big)dxdt\bigg].
\end{array}
\end{equation*}
Together with the first order condition (\ref{1storder}), we have
\begin{equation*}
	0=\mathbb{E}\bigg[\displaystyle\int_0^T\displaystyle\int_{\mathbb{R}^{n_1}}\tilde{u}_1(x,t)\Big[g_{1,u_1}\Big(x,x_{0}(t),\mu(t),\hat{u}_1(x,t)\Big)D\Psi(x,t)+f_{1,u_1}\Big(x,x_{0}(t),\nu(t),\hat u_1(x,t)\Big)\Big]p_{\hat{u}_1}(x,t)dxdt\bigg].
\end{equation*}
Recall that the $p_{\hat{u}_1}(\cdot,t)$ is a conditional probability density function and hence non-negative, and $\tilde{u}_1$ is an arbitrary Markovian control, we have $\hat{u}_1$ is optimal only if 
\begin{equation*}
	g_{1,u_1}\Big(x,x_{0}(t),\mu(t),\hat{u}_1(x,t)\Big)D\Psi(x,t)+f_{1,u_1}\Big(x,x_{0}(t),\nu(t),\hat u_1(x,t)\Big)=0, a.e. (x,t).
\end{equation*}
With the definition of $L$ in the theorem, the condition becomes
\begin{equation*}
	L_{u_1}(x,x_0(t),\nu(t),\hat{u}_1(x,t),D\Psi(x,t))=0, a.e.(x,t),
\end{equation*}
which provides a necessary condition for the minimization problem. As the minimizer is assumed to be attained at $\hat{u}_1$, which depends on $x$, $x_0$, $\nu$, and $D\Psi$, we arrive for the SHJB Equation.
\qed \end{proof}

Replace the arbitrary measure $\nu$ by the mean field measure $\mu$. Equating $\mu:=m_{x_0}d\lambda$ with  $p_{\hat u_1}d\lambda$, the measure of the optimal state of the representative agent conditioning on $\mathcal{F}^0_t$; the couple (\ref{FPEQ}) and (\ref{SHJB}) give the following corollary.
\begin{cor}{\bf (Necessary condition for Problems \ref{P1} and \ref{P2})}\label{P1P2} \\
	The control for the representative agent is optimal and the equilibrium condition holds only if the SHJB-FP coupled equations are satisfied\\
	\begin{equation}
	\label{HJBFP}
	\left\{
	\begin{array}{rcl}
		-\partial_{t}\Psi&=& \Big(H_1\Big(x,x_0(t),m_{x_0}(x,t),D\Psi(x,t)\Big)-A_1\Psi(x,t)\Big)dt  -K_{\Psi}(x,t)dW_0(t), \\
		\Psi(x,T)  &=&h_1(x,x_{0}(T),m_{x_0}(x,T)).\\
		\dfrac{\partial m_{x_0}}{\partial t}&=&-A_1^{*}m_{x_0}(x,t)-{\normalfont \text{div}}\Big(G_1\Big(x,x_0(t),m_{x_0}(x,t),D\Psi(x,t)\Big)m_{x_0}(x,t)\Big),\\
		m_{x_0}(x,0) &=&\omega(x),\\
	\end{array}
	\right.
	\end{equation}
	where $G_1(x,x_0,m,q)=g_1(x,x_0,m,\hat{u}_1(x,x_0,m,q))$.
\end{cor}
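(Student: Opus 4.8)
The plan is to combine the necessary condition already established in the Lemma for Problem \ref{P1} with the fixed-point requirement of Problem \ref{P2}, and then to close the system by feeding the optimal feedback $\hat u_1$ into the forward Fokker--Planck dynamics. Since the statement is a necessary (``only if'') condition, no fresh optimization argument is required: I would take as granted that an optimal $\hat u_1$ solves the SHJB (\ref{SHJB}) relative to the frozen exogenous $\nu$, and then impose the consistency that characterizes equilibrium. The content of the corollary is therefore essentially a substitution together with a check that the substitution genuinely produces a closed, self-consistent coupled system.

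First I would invoke the Lemma: for the representative agent's problem with frozen data $x_0$ and exogenous $\nu$, optimality of $\hat u_1$ forces the backward SHJB (\ref{SHJB}) with Hamiltonian $H_1(x,x_0,\nu,q)=\inf_{u_1}L(x,x_0,\nu,u_1,q)$, the infimum being attained uniquely at the feedback $\hat u_1=\hat u_1(x,x_0,\nu,D\Psi)$. Next I would enforce the equilibrium condition of Problem \ref{P2}, namely $\mathcal{M}(\mu)(\cdot)=\mu(\cdot)$: the mean field measure must coincide with the conditional law (given $\mathcal{F}^0_t$) of the resulting optimal state. By the existence and regularity of the conditional density recorded in the Remark following (A.5), this fixed point is realized by setting $\nu=\mu$ and identifying $\mu=p_{\hat u_1}\,d\lambda=:m_{x_0}\,d\lambda$, where $p_{\hat u_1}$ solves the FP equation (\ref{FPEQ}) driven by the optimal control $\hat u_1$.

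The core of the argument is then to verify that this identification yields the coupled system (\ref{HJBFP}) consistently. On the backward side, replacing $\nu$ by $m_{x_0}$ in (\ref{SHJB}) immediately gives the first two lines of (\ref{HJBFP}), now with $H_1$ evaluated at the endogenous measure $m_{x_0}(\cdot,t)$. On the forward side, the transport drift in (\ref{FPEQ}) is $g_1\big(x,x_0(t),\mu(t),\hat u_1(x,t)\big)$; using the definition $G_1(x,x_0,m,q)=g_1(x,x_0,m,\hat u_1(x,x_0,m,q))$ together with $\mu=m_{x_0}$, this drift becomes $G_1\big(x,x_0(t),m_{x_0}(x,t),D\Psi(x,t)\big)$, which is exactly the FP line of (\ref{HJBFP}). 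The point I would emphasize is that a single minimizer $\hat u_1$ appears coherently in both equations: it is the $\hat u_1$ realizing $H_1=L(\cdots,\hat u_1,\cdots)$ in the Hamiltonian, and simultaneously the drift coefficient $g_1(\cdots,\hat u_1,\cdots)=G_1$ in the transport term, so that $\Psi$ and $m_{x_0}$ become coupled through both $D\Psi$ and $m_{x_0}$.

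The main subtlety — more a consistency check than a genuine obstacle — is the legitimacy of freezing $\nu$ and then identifying it with the endogenous optimal law. One must confirm that the minimizer $\hat u_1$ obtained with $\nu$ held fixed remains the minimizer after the identification $\nu=\mu=m_{x_0}$ (so that the envelope relation $L_{u_1}=0$ and the value $H_1$ are unaffected by the substitution), and that the induced feedback $\hat u_1(x,x_0,m_{x_0},D\Psi)$ retains enough regularity — Lipschitz in $(x,q)$ uniformly in $(x_0,\mu)$, as noted in the Remark following the Lemma — for the FP equation to keep the well-posedness granted by the Remark after (A.5). Because the equilibrium condition only requires the two measures to coincide at the fixed point, the substitution is admissible and the coupled SHJB--FP system (\ref{HJBFP}) follows. \qed
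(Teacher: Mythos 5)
Your proposal is correct and follows essentially the same route as the paper: the paper's own justification is the one-line remark preceding the corollary, which replaces the exogenous $\nu$ by the equilibrium measure $\mu = m_{x_0}\,d\lambda = p_{\hat u_1}\,d\lambda$ and couples (\ref{SHJB}) with (\ref{FPEQ}), exactly your substitution argument with the drift rewritten as $G_1$ via the optimal feedback. Your additional consistency checks (that the minimizer and the regularity of $\hat u_1$ survive the identification $\nu = m_{x_0}$) are a more careful elaboration of the same step, not a different method.
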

The SHJB-FP coupled equations (\ref{HJBFP}) allow us to obtain the control of the representative agent in terms of a given trajectory of the dominating player $x_0$ while the equilibrium condition also holds. 

We then turn to the optimal problem for the dominating player. As $m_{x_0}$ is not external to the dominating player, the dominating player has to consider both its own dynamic evolution and (\ref{HJBFP}).
\begin{lem}{\bf Necessary condition for Problem \ref{P3}}\\
\label{LEMP3}
	The control for the dominating player $\hat u_0$ is optimal only if
	\begin{equation*}
	\begin{split}
		f_0(x_0,m_{x_0},\hat u_0)+p \cdot g_0(x_0,m_{x_0},\hat u_0)&= \inf_{u_0} \Big\{f_0(x_0,m_{x_0},u_0)+p \cdot g_0(x_0,m_{x_0},u_0)\Big\} \\
		:&=H_0(x_0,m_{x_0},p)\\
	\end{split}
	\end{equation*}
	where $p(t)$ satisfies the following adjoint processes
	\begin{equation*}
	\begin{array}{rcl}
		-dp&=&\bigg[g_{0,x_{0}}^*(x_{0}(t),m_{x_0}(x,t),\hat u_{0}(t))p(t)+f_{0,x_{0}}(x_{0}(t),m_{x_0}(x,t),\hat u_{0}(t))\\
		&&+\displaystyle \int G_{1,x_{0}}(x,x_{0}(t),m_{x_0}(x,t),D\Psi{(x,t)})D{q}(x,t)m_{x_0}(x,t)dx\\
		&&+\displaystyle \int r(x,t)H_{1,x_{0}}(x,x_{0}(t),m_{x_0}(x,t),D\Psi{(x,t)})dx\bigg]dt\\
		&&-\sum_{l=1}^{d_0}K_p^l(t)dW_0^l(t) +\sum_{l=1}^{d_0}\sigma_{0,x_{0}}^{l*}(x_0(t))K_p^l(t)dt, \\
		p(T)&=&h_{0,x_0}(x_0(T),m_{x_0}(x,T))+\displaystyle 	\int{r}(x,T)h_{1,x_0}(x,x_0(T),m_{x_0}(x,T))dx; \\
	\end{array}
	\end{equation*}
	\begin{equation*}
	\begin{array}{rcl}
		-\partial_{t}{q}&=&\bigg[-A_1{q}(x,t)+p(t)\dfrac{\partial g_{0}}{\partial m}(x_{0}(t),m_{x_0}(\xi,t),\hat u_{0}(t))(x)\\
		&&+D{q}(x,t)G_1(x,x_{0}(t),m_{x_0}(x,t),D\Psi{(x,t)})\\
		&&+\displaystyle \int D{q}(\xi,t)\dfrac{\partial G_1}{\partial m}(\xi,x_{0}(t),m_{x_0}(\xi,t),D\Psi{(\xi,t)})(x)m_{x_0}(\xi,t)d\xi\\
		&&+\displaystyle \int{r}(\xi,t)\dfrac{\partial H_1}{\partial m}(\xi,x_{0}(t),m_{x_0}(\xi,t),D\Psi{(\xi,t)})(x)d\xi\\
		&&+\dfrac{\partial f_{0}}{\partial m}(x_{0}(t),m_{x_0}(x,t),\hat u_{0}(t))(x)\bigg]dt-K_q(x,t)dW_0(t),\\
		{q}(x,T)&=&\dfrac{\partial h_{0}}{\partial m}(x_{0}(T),m_{x_0}(\xi,T))(x)+\displaystyle \int {r}(\xi,T)\dfrac{\partial h_1}{\partial m}(\xi,x_0(T),m_{x_0}(\xi,T))(x)d\xi; 
	\end{array}
	\end{equation*}
	\begin{equation*}
	\begin{array}{rcl}
		\dfrac{\partial{r}}{\partial t}&=&-A_1^{*}{r}(x,t)-\text{\rm div}\bigg[{r}(x,t)H_{1,q}(x,x_{0}(t),m_{x_0}(x,t),D\Psi{(x,t)})\\
		&&+G_{1,q}(x,x_{0}(t),m_{x_0}(x,t),D\Psi{(x,t)})D{q}(x,t)m_{x_0}(x,t)\bigg],\\
		{r}(x,0)&=&0.
	\end{array}
	\end{equation*}
\end{lem}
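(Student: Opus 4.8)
The plan is to treat Problem~\ref{P3} as a stochastic control problem whose effective ``state'' is the triple $(x_0, m_{x_0}, \Psi)$, governed by the finite-dimensional SDE for $x_0$ in (\ref{SDE}) coupled to the SHJB--FP system (\ref{HJBFP}), and to derive the stated necessary condition by the stochastic maximum principle exactly as in the proof of the preceding Lemma. Because $m_{x_0}$ and $\Psi$ both depend on the trajectory $x_0$, while $g_0, f_0, h_0$ in turn depend on $m_{x_0}$, a perturbation of $u_0$ propagates through the whole coupled system; this is precisely why \emph{three} adjoint processes appear, one dual to each state component: $p(t)\in\mathbb{R}^{n_0}$ dual to the finite-dimensional forward state $x_0$, and the function-valued processes $q(\cdot,t)$ dual to the forward density $m_{x_0}$ and $r(\cdot,t)$ dual to the backward field $\Psi$.

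First I would fix the optimal $\hat u_0$, perturb it as $\hat u_0 + \theta\tilde u_0$ for arbitrary $\tilde u_0\in\mathcal{A}_0$, and introduce the G\^ateaux derivatives $\tilde x_0 := \partial_\theta x_0\big|_{\theta=0}$, $\tilde m := \partial_\theta m_{x_0}\big|_{\theta=0}$ and $\tilde\Psi := \partial_\theta \Psi\big|_{\theta=0}$. Differentiating (\ref{SDE}) and (\ref{HJBFP}) in $\theta$ yields the linearised (variational) system: $\tilde x_0$ solves a linear forward SDE with drift built from $g_{0,x_0}\tilde x_0$, $\frac{\partial g_0}{\partial m}\cdot\tilde m$ and $g_{0,u_0}\tilde u_0$ and diffusion $\sigma_{0,x_0}\tilde x_0\,dW_0$; $\tilde m$ solves a linear forward FP-type equation obtained by linearising $G_1(x,x_0,m_{x_0},D\Psi)m_{x_0}$ in $(x_0,m,D\Psi)$; and $\tilde\Psi$ solves a linear backward SPDE obtained by linearising $H_1(x,x_0,m_{x_0},D\Psi)$. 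These three variational equations are coupled in the same pattern as the state system.

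Next I would compute $0 = \partial_\theta J_0(\hat u_0 + \theta\tilde u_0)\big|_{\theta=0}$, producing a pairing of $\tilde x_0$, $\tilde m$ and $\tilde u_0$ against $f_{0,x_0}$, $\frac{\partial f_0}{\partial m}$, $f_{0,u_0}$ and $h_{0,x_0}$, $\frac{\partial h_0}{\partial m}$. The heart of the argument is the duality step: I would apply It\^o's formula to the pairings $p(t)\cdot\tilde x_0(t)$, $\int q(x,t)\tilde m(x,t)\,dx$ and $\int r(x,t)\tilde\Psi(x,t)\,dx$, and \emph{define} the adjoint equations so that, after integrating over $[0,T]$ and taking expectations (the martingale terms $K_\Psi$, $K_p^l$, $K_q$ ensuring the It\^o integrals have zero mean), every term containing $\tilde x_0$, $\tilde m$ or $\tilde\Psi$ cancels against its counterpart in $\partial_\theta J_0$. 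What survives is $\mathbb{E}\int_0^T [f_{0,u_0}+p\cdot g_{0,u_0}]\tilde u_0\,dt$; since $\tilde u_0$ is arbitrary, this forces the stationarity $\partial_{u_0}[f_0 + p\cdot g_0]=0$, which the convexity/affine assumption (cf.\ the preceding Remark) upgrades to the Hamiltonian minimisation defining $H_0$.

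The main obstacle is the correct bookkeeping of the cross-couplings when matching the three adjoint equations. Three features demand care. (i) The mean-field dependence: the G\^ateaux derivatives $\frac{\partial g_0}{\partial m}$, $\frac{\partial f_0}{\partial m}$, $\frac{\partial G_1}{\partial m}$, $\frac{\partial H_1}{\partial m}$ are themselves functions on $\mathbb{R}^{n_1}$, so their duals appear as the nonlocal double integrals $\int(\cdots)(\xi,t)\frac{\partial(\cdots)}{\partial m}(\xi,\ldots)(x)\,d\xi$ in the $q$- and $p$-equations; keeping the roles of the integration variables $x$ and $\xi$ straight is delicate. (ii) The internal $\Psi$--$m_{x_0}$ coupling inside (\ref{HJBFP}) means the linearisation of $G_1$ and $H_1$ produces both $D\tilde\Psi$-terms (matched by $r$ against $G_{1,q}$, $H_{1,q}$) and $\tilde m$-terms, so the forward adjoint $r$ (with $r(\cdot,0)=0$) and the backward adjoint $q$ become entangled. (iii) Since $\Psi$ is backward while $x_0$ and $m_{x_0}$ are forward, the pairings generate boundary contributions at both $t=0$ and $t=T$, which must be matched against $p(T)$, $q(\cdot,T)$ and $r(\cdot,0)=0$; moreover the state-dependence of $\sigma_0(x_0)$ forces the It\^o-correction term $\sum_l \sigma_{0,x_0}^{l*}K_p^l$ in the $p$-equation. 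Once these identifications are carried out consistently, the stated adjoint system is exactly what annihilates the unwanted variational terms.
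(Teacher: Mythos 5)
Your proposal follows essentially the same route as the paper's proof: perturbing $\hat u_0$, linearising the coupled system $(x_0, m_{x_0}, \Psi)$ to get $(\tilde x_0, \tilde m_{x_0}, \tilde\Psi)$, pairing these with the three adjoint processes $p$, $q$, $r$ (the paper uses the combination $p^*\tilde x_0 + \int q\,\tilde m_{x_0}\,dx - \int r\,\tilde\Psi\,dx$, i.e.\ with a minus sign on the $r$-pairing, which is the sign convention your cancellation scheme implicitly requires), and then extracting the stationarity condition $f_{0,u_0} + p^* g_{0,u_0} = 0$ from the arbitrariness of $\tilde u_0$, upgraded to the Hamiltonian infimum by the assumed unique minimiser. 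Your identification of the delicate points --- the nonlocal $\xi$-integrals dual to the mean-field G\^ateaux derivatives, the forward adjoint $r$ with $r(\cdot,0)=0$ dual to the backward field $\Psi$, and the It\^o correction $\sum_l\sigma_{0,x_0}^{l*}K_p^l$ --- matches exactly the structure of the paper's computation.
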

\begin{proof}
Again we consider the G\^{a}teaux derivative 
\begin{equation}
	\begin{split}
	\label{gatJ0}
			0=&\at{\dfrac{d}{d\theta}}{\theta=0}J_0(\hat u_0+\theta\tilde{u}_0) \\
			=&\mathbb{E}\left\{\int_{0}^{T}[f_{0,x_{0}}(x_{0}(t),m_{x_0}(x,t),\hat u_{0}(t))\tilde{x}_{0}(t)+\int\dfrac{\partial f_{0}}{\partial m}(x_{0}(t),m_{x_0}(x,t),\hat u_{0}(t))(\xi)\tilde{m}_{x_0}(\xi,t)d\xi\right.\\
			&+f_{0,u_{0}}(x_{0}(t),m_{x_0}(x,t),\hat u_{0}(t))\tilde{u}_{0}(t)]dt\\
			&+\left. h_{0,x_{0}}(x_{0}(T),m_{x_0}(x,T))\tilde{x}_{0}(T)+\int\dfrac{\partial h_{0}}{\partial m}(x_{0}(T),m_{x_0}(x,T))(\xi)\tilde{m}_{x_0}(\xi,T)d\xi\right\},
		\end{split}
		\end{equation}
		where $\tilde{x}_0=\at{\frac{d}{d\theta}}{\theta=0}x_{0}(\hat u_0+\theta\tilde{u}_0)$; 
		$\tilde{m}_{x_0}=\at{\frac{d}{d\theta}}{\theta=0}m_{x_0}(\hat u_0+\theta\tilde{u}_0)$;
		$\tilde{\Psi}=\at{\frac{d}{d\theta}}{\theta=0}\Psi(\hat u_0+\theta\tilde{u}_0)$,
		satisfy
	\begin{equation*}
	\begin{array}{rcl}
		d\tilde{x}_{0}&=&\bigg[g_{0,x_{0}}(x_{0}(t),m_{x_0}(x,t),\hat u_{0}(t))\tilde{x}_{0}(t)+\displaystyle \int\dfrac{\partial g_{0}}{\partial m}(x_{0}(t),m_{x_0}(x,t),\hat u_{0}(t))(\xi)\tilde{m}_{x_0}(\xi,t)d\xi\\
		&&+g_{0, u_{0}}(x_{0}(t),m_{x_0}(x,t),\hat u_{0}(t))\tilde{u}_{0}(t)\bigg]dt+\sum_{l=1}^{d_0}\sigma_{0,x_{0}}^l (x_{0}(t))\tilde{x}_{0}(t)dW_0^l(t),\\
		\tilde{x}_{0}(0)&=&0; \\
	\end{array}
	\end{equation*}
	\begin{equation*}
	\begin{array}{rcl}
		\dfrac{\partial \tilde{m}_{x_0}}{\partial t}&=&-A_1^*\tilde{m}_{x_0}(x,t)-\text{\rm div}\bigg\{\Big[G_{1,x_{0}}(x,x_{0}(t),m_{x_0}(x,t),D\Psi(x,t))\tilde{x}_{0}(t)\\
		&&+\displaystyle \int\dfrac{\partial G_1}{\partial m}(x,x_{0}(t),m_{x_0}(x,t),D\Psi(x,t))(\xi)\tilde{m}_{x_0}(\xi,t)d\xi \\
		&&+G_{1,q}(x,x_{0}(t),m_{x_0}(x,t),D\Psi(x,t))D\tilde{\Psi}(x,t)\Big]m_{x_0}(x,t)\\
		&&+G_1(x,x_{0}(t),m_{x_0}(x,t),D\Psi(x,t))\tilde{m}_{x_0}(x,t)\bigg\},\\
		\tilde{m}_{x_0}(x,0)&=&0;\\
	\end{array}
	\end{equation*}
	\begin{equation*}
	\begin{array}{rcl}
		-\partial_{t}\tilde{\Psi}&=&\bigg[H_{1,x_0}(x,x_{0}(t),m_{x_0}(x,t),D\Psi(x,t))\tilde{x}_{0}(t)\\
		&&+\displaystyle \int\dfrac{\partial H_1}{\partial m}(x,x_{0}(t),m_{x_0}(x,t),D\Psi(x,t))(\xi)\tilde{m}_{x_0}(\xi,t)d\xi\\
		&&+H_{1,q}(x,x_{0}(t),m_{x_0}(x,t),D\Psi(x,t))D\tilde{\Psi}(x,t)-A_1\tilde{\Psi}(x,t)\bigg]dt-K_{\tilde{\Psi}}(x,t)dW_0(t)\nonumber ,\\
		\tilde{\Psi}(x,T)&=&h_{1,x_0}(x,x_0(T),m_{x_0}(x,T))\tilde{x}_0(T)+\displaystyle \int \dfrac{\partial h_1}{\partial m}(x,x_0(T),m_{x_0}(x,T))(\xi) \tilde{m}_{x_0}(\xi,T)d\xi. \\
	\end{array}
	\end{equation*}		
	Introduce the adjoint processes $p(t)$, ${q}(x,t)$ and ${r}(x,t)$ as stated in the lemma statement and consider the following differentials\\
	\begin{equation*}
	\begin{array}{rcl}
		&&d(p^* \tilde x_0)\\
		&=&-\tilde x_0^*(t)\bigg[f_{0,x_{0}}(x_{0}(t),m_{x_0}(x,t),\hat u_{0}(t))+\displaystyle \int{r}(x,t)H_{1,x_{0}}(x,x_{0}(t),m_{x_0}(x,t),D\Psi(x,t))dx\\
		&&+\displaystyle \int G_{1,x_{0}}(x,x_{0}(t),m_{x_0}(x,t),D\Psi{(x,t)})D{q}(x,t)m_{x_0}(x,t)dx\bigg]dt\\
		&&+p^*(t)\bigg[\displaystyle \int\dfrac{\partial g_{0}}{\partial m}(x_{0}(t),m_{x_0}(x,t),\hat u_{0}(t))(\xi)\tilde{m}_{x_0}(\xi,t)d\xi\\
		&&+g_{0,u_0}(x_{0}(t),m_{x_0}(x,t),\hat u_{0}(t))\tilde{u}_{0}(t)\bigg]dt+\left\{\dots\right\}dW_0(t).\\
	\end{array}
	\end{equation*}
	\begin{equation*}
	\begin{array}{rcl}
		&&d\displaystyle \int {q}(x,t)\tilde{m}_{x_0}(x,t)dx\\
		&=&\displaystyle \int {q}(x,t)\bigg[-\text{\rm div}\Big[[G_{1,x_{0}}(x,x_{0}(t),m_{x_0}(x,t),D\Psi(x,t))\tilde{x}_0(t)\\
		&&+G_{1,q}(x,x_{0}(t),m_{x_0}(x,t),D\Psi(x,t))D\tilde{\Psi}]m_{x_0}(x,t)\Big]\bigg]dxdt\\
		&&-\displaystyle \int\Big[p(t)\dfrac{\partial g_{0}}{\partial m}(x_{0}(t),m_{x_0}(\xi,t),\hat u_{0}(t))(x)+\displaystyle \int{r}(\xi,t)\dfrac{\partial H_1}{\partial m}(\xi,x_{0}(t),m_{x_0}(\xi,t),D\Psi{(\xi,t)})(x)d\xi\\
		&&+\dfrac{\partial f_{0}}{\partial m}(x_{0}(t),m_{x_0}(x,t),\hat u_{0}(t))(x)\Big]\tilde m_{x_0}(x,t)dxdt+\left\{\dots\right\}dW_0(t).\\
	\end{array}
	\end{equation*}
	\begin{equation*}
	\begin{array}{rcl}
		&&d\displaystyle \int {r}(x,t)\tilde{\Psi}(x,t)dx\\	
		&=&\displaystyle \int\bigg[-\text{\rm div}\Big[G_{1,q}(x,x_{0}(t),m_{x_0}(x,t),D\Psi{(x,t)})D{q}(x,t)m_{x_0}(x,t)\Big]\bigg]\tilde{\Psi}(x,t)dxdt\\
		&&-\displaystyle \int r(x,t)\bigg[H_{1,x_{0}}(x,x_{0}(t),m_{x_0}(x,t),D\Psi(x,t))\tilde{x}_{0}(t)\\
		&&+\displaystyle \int\dfrac{\partial H_1}{\partial m}(x,x_{0}(t),m_{x_0}(x,t),D\Psi(x,t))(\xi)\tilde{m}_{x_0}(\xi,t)d\xi\bigg]dxdt+\left\{\dots\right\}dW_0(t).\\	
	\end{array}
	\end{equation*}
	Using the results above, we have
	\begin{equation*}
	\begin{array}{rcl}
		&&d\displaystyle \Big\{p^*\tilde{x}_0+\int {q}(x,t)\tilde{m}_{x_0}(x,t)dx-\int {r}(x,t)\tilde{\Psi}(x,t)dx\Big\}\\
		&=&\bigg[\displaystyle-\tilde x_0^*(t)f_{0,x_{0}}(x_{0}(t),m_{x_0}(x,t),\hat u_{0}(t))+p^*(t)g_{0,u_0}(x_{0}(t),m_{x_0}(x,t),\hat u_{0}(t))\tilde{u}_{0}(t)\\
		&&-\displaystyle\int\dfrac{\partial f_{0}}{\partial m}(x_{0}(t),m_{x_0}(x,t),\hat u_{0}(t))(x)\tilde m_{x_0}(x,t)dx\bigg]dt+\left\{\dots\right\}dW_0(t).\\
	\end{array}
	\end{equation*}
	Integrating and taking expectation on both sides gives
		\begin{equation*}
		\begin{array}{rl}
			&\mathbb{E}\bigg[(h_{0,x_0}(x_0(T),m_{x_0}(x,T))+\displaystyle 	\int{r}(x,T)h_{1,x_0}(x,x_0(T),m_{x_0}(x,T))dx)\tilde x_0(T)dx\\
			&\displaystyle+\int \Big[\dfrac{\partial h_{0}}{\partial m}(x_{0}(T),m_{x_0}(x,T))(x)+\displaystyle \int {r}(\xi,T)\dfrac{\partial h_1}{\partial m}(\xi,x_0(T),m_{x_0}(x,T))(x)d\xi \Big]\tilde m_{x_0}(x,T)dx\\
			&\displaystyle-\int r(x,T)\Big[ h_{1,x_0}(x,x_0(T),m_{x_0}(x,T))\tilde{x}_0(T)+\displaystyle \int \dfrac{\partial h_1}{\partial m}(x,x_0(T),m_{x_0}(x,T))(\xi) \tilde{m}_{x_0}(\xi,T)d\xi\Big]dx\bigg]\\
			=&\displaystyle\mathbb{E}\int_0^T\bigg\{\displaystyle-\tilde x_0^*(t)f_{0,x_{0}}(x_{0}(t),m_{x_0}(x,t),\hat u_{0}(t))+p^*(t)g_{0,\hat u_{0}}(x_{0}(t),m_{x_0}(x,t),\hat u_{0}(t))\tilde{u}_{0}(t)\\
			&-\displaystyle\int\dfrac{\partial f_{0}}{\partial m}(x_{0}(t),m_{x_0}(x,t),\hat u_{0}(t))(x)\tilde m_{x_0}(x,t)dx\bigg\}dt.\\
		\end{array}
		\end{equation*}
	Finally we consider (\ref{gatJ0}) and obtain
	\begin{equation*}
	\begin{array}{rcl}
			0&=&\mathbb{E} \displaystyle\int_0^T\Big\{f_{0,\hat u_{0}}(x_{0},m_{x_0},\hat u_{0})+p^*g_{0,u_0}(x_{0},m_{x_0},\hat u_{0})\Big\}\tilde{u}_0dt.
	\end{array}
	\end{equation*}
	Since $\tilde{u}_0$ is arbitrary, the control is optimal for the dominating player only if 
		\begin{equation*}
		\begin{split}
			f_{0,u_0}(x_{0},m_{x_0},\hat u_{0})+p^*g_{0,u_0}(x_{0},m_{x_0},\hat u_{0})=0, \text{a.e. t.}
		\end{split}
		\end{equation*}
	Again, we are considering a minimization problem with the first order condition given above. Since the unique existence of a minimizer, $\hat u_0$ is assumed, we conclude that $\hat u_0$ satisfies the infimum
	\begin{equation*}
		f_0(x_0,m_{x_0},\hat u_0)+p \cdot g_0(x_0,m_{x_0},\hat u_0)= \inf_{u_0} \Big\{f_0(x_0,m_{x_0},u_0)+p \cdot g_0(x_0,m_{x_0},u_0)\Big\}. \\
	\end{equation*}
 \qed \end{proof}
	Let $G_0(x,m_{x_0},p)=g_0(x_0,m_{x_0},\hat{u}_0(x_0,m_{x_0},p))$. We then conclude the main result in this section.
\begin{thm}
	 	{\it The necessary condition for Problems \ref{P1}, \ref{P2} and \ref{P3} is provided by the following six equations}
	 	\label{main}
	 	\begin{equation*}
	 	\begin{array}{l}
	 	\left\{
	 	\begin{array}{rll}
			dx_0 &=& G_0(x_0(t),m_{x_0(t)}(t),p(t))dt+\sigma_0(x_0(t))dW_0(t),\\
			x_0(0) &=& \xi_0.\\
			\\
			\dfrac{\partial m_{x_0}}{\partial t}&=&-A_1^{*}m_{x_0}(x,t)-{\normalfont \text{div}}\Big(G_1\Big(x,x_0(t),m_{x_0}(x,t),D\Psi(x,t)\Big)m_{x_0}(x,t)\Big),\\
			m_{x_0}(x,0) &=&\omega(x),\\
			\\
			-\partial_{t}\Psi&=& \Big(H_1\Big(x,x_0(t),m_{x_0}(x,t),D\Psi(x,t)\Big)-A_1\Psi(x,t)\Big)dt  -K_{\Psi}(x,t)dW_0(t), \\
			\Psi(x,T)  &=&h_1(x,x_{0}(T),m_{x_0}(x,T)).\\
		\end{array}
		\right.\\
		\\
		\left\{
		\begin{array}{rll}
			-dp&=&\bigg[H_{0,x_0}(x_0(t),m_{x_0}(x,t),p(t))+\displaystyle \int{r}(x,t)H_{x_{0}}(x,x_{0}(t),m_{x_0}(x,t),D\Psi(x,t))dx\\
			&&+\displaystyle \int G_{1,x_{0}}(x,x_{0}(t),m_{x_0}(x,t),D\Psi(x,t))D{q}^*(x,t)m_{x_0(t)}(x,t)dx\bigg]dt\\
			&&-\sum_{l=1}^{d_0}K_p^l(t)dW_0^l(t) +\sum_{l=1}^{d_0}\sigma_{0,x_{0}}^{l*}(x_0(t))K_p^l(t)dt, \\
			p(T)&=&h_{0,x_0}(x_0(T),m_{x_0}(x,T))+\displaystyle 	\int{r}(x,T)h_{1,x_0}(x,x_0(T),m_{x_0}(x,T))dx; \\
			\\	
			-\partial_{t}{q}&=&\bigg[-A_1{q}(x,t)+\dfrac{\partial H_{0}}{\partial m}(x_{0}(t),m_{x_0}(\xi,t),p(t))(x)\\
			&&+D{q}(x,t)G_1(x,x_{0}(t),m_{x_0}(x,t),D\Psi{(x,t)})\\
			&&+\displaystyle \int D{q}(\xi,t)\dfrac{\partial G_1}{\partial m}(\xi,x_{0}(t),m_{x_0}(\xi,t),D\Psi{(\xi,t)})(x)m_{x_0}(\xi,t)d\xi\\
			&&+\displaystyle \int{r}(\xi,t)\dfrac{\partial H_1}{\partial m}(\xi,x_{0}(t),m_{x_0}(\xi,t),D\Psi{(\xi,t)})(x)d\xi\bigg]dt-K_q(x,t)dW_0(t),\\
			{q}(x,T)&=&\dfrac{\partial h_{0}}{\partial m}(x_{0}(T),m_{x_0}(\xi,T))(x)+\displaystyle \int {r}(\xi,T)\dfrac{\partial h_1}{\partial m}(\xi,x_0(T),m_{x_0}(\xi,T))(x)d\xi;\\
			\\
			\dfrac{\partial{r}}{\partial t}&=&-A_1^{*}{r}(x,t)-\text{\rm div}\bigg[{r}(x,t)H_{1,q}(x,x_{0}(t),m_{x_0}(x,t),D\Psi{(x,t)})\\
			&&+G_{1,q}(x,x_{0}(t),m_{x_0}(x,t),D\Psi{(x,t)})D{q}(x,t)m_{x_0}(x,t)\bigg],\\
			{r}(x,0)&=&0.
	 	\end{array}
	 	\right.
	 	\end{array}
	 	\end{equation*}
\end{thm}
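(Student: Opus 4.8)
The plan is to assemble Theorem \ref{main} from the two blocks already in hand, namely Corollary \ref{P1P2} (which resolves Problems \ref{P1} and \ref{P2}) and Lemma \ref{LEMP3} (which resolves Problem \ref{P3}), and then to recast the coefficients of the adjoint system into Hamiltonian form. The forward block of three equations — the state SDE for $x_0$, the Fokker--Planck equation for $m_{x_0}$, and the SHJB equation for $\Psi$ — is immediate: the last two are precisely the coupled system (\ref{HJBFP}) of Corollary \ref{P1P2}, so I would transcribe them unchanged and devote the remaining work to the first equation and to the $p,q,r$ triple.

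First I would fix the dominating player's optimal feedback. Lemma \ref{LEMP3} shows that $\hat u_0$ attains the infimum defining $H_0(x_0,m_{x_0},p)=\inf_{u_0}\{f_0+p\cdot g_0\}$, so under the assumed unique attainment we may write $\hat u_0=\hat u_0(x_0,m_{x_0},p)$. Setting $G_0(x_0,m_{x_0},p)=g_0(x_0,m_{x_0},\hat u_0(x_0,m_{x_0},p))$ as in the statement and inserting this feedback into the state dynamics (\ref{SDE}) with $\mu=m_{x_0}$ produces the first equation, the diffusion $\sigma_0(x_0)dW_0$ being untouched.

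The substantive step is to rewrite the drift of the $p$-equation and the source term of the $q$-equation, which Lemma \ref{LEMP3} gives in terms of $g_{0,x_0}^*p+f_{0,x_0}$ and of $p\,\partial_m g_0+\partial_m f_0$, into the compact forms $H_{0,x_0}$ and $\partial_m H_0$. This is an envelope argument. Differentiating the identity $H_0(x_0,m_{x_0},p)=f_0(x_0,m_{x_0},\hat u_0)+p\cdot g_0(x_0,m_{x_0},\hat u_0)$ in $x_0$, the contribution through $\partial_{x_0}\hat u_0$ is $(f_{0,u_0}+p^* g_{0,u_0})\,\partial_{x_0}\hat u_0$, which vanishes by the first-order condition $f_{0,u_0}+p^* g_{0,u_0}=0$ derived at the end of the proof of Lemma \ref{LEMP3}; hence $H_{0,x_0}=f_{0,x_0}+g_{0,x_0}^*p$, exactly the bracketed local term of the $p$-equation. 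The identical computation for the G\^ateaux derivative in the measure argument gives $\partial_m H_0=\partial_m f_0+p\,\partial_m g_0$, the local source of the $q$-equation. The remaining integral couplings (those carrying $r\,H_{1,x_0}$, $G_{1,x_0}Dq\,m_{x_0}$, $\partial_m G_1$, $\partial_m H_1$, and the $H_{1,q}$, $G_{1,q}$ terms), the stochastic corrections $\sum_l\sigma_{0,x_0}^{l*}K_p^l\,dt$, and the terminal data are copied verbatim from Lemma \ref{LEMP3}.

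Finally, the equation for $r$ is already stated in Hamiltonian form in Lemma \ref{LEMP3} and needs no modification, so it transcribes directly as the sixth equation; collecting the forward block, the recast $p$- and $q$-equations, and the $r$-equation completes the system. I expect the only genuine obstacle to be the clean justification of the envelope identities $H_{0,x_0}=f_{0,x_0}+g_{0,x_0}^*p$ and $\partial_m H_0=\partial_m f_0+p\,\partial_m g_0$: one must verify that the minimizer $\hat u_0$ is differentiable in $(x_0,m_{x_0})$ — guaranteed by the affine drift and strong convexity hypotheses in the Remark following the Lemma — so that the chain rule applies and the first-order term legitimately cancels.
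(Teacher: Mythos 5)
Your proposal is correct and is essentially the paper's own route: the paper offers no separate proof of Theorem \ref{main}, but simply ``concludes'' it by assembling Corollary \ref{P1P2} with Lemma \ref{LEMP3} and defining $G_0(x_0,m_{x_0},p)=g_0(x_0,m_{x_0},\hat u_0(x_0,m_{x_0},p))$, exactly as you do. Your write-up in fact makes explicit the one step the paper leaves silent, namely the envelope identities $H_{0,x_0}=f_{0,x_0}+g_{0,x_0}^*p$ and $\partial_m H_0=\partial_m f_0+p\,\partial_m g_0$ (valid because the first-order condition $f_{0,u_0}+p^*g_{0,u_0}=0$ kills the term through $\hat u_0$), which is a welcome clarification rather than a deviation.
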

\section{Linear Quadratic Case}\label{lqcase}
	In this section we present a special case of the problem in the Linear Quadratic setting in which both necessary and sufficient condition could be established. Suppose that the state evolutions of the processes $ x_0(t), x_1(t) $ are described by 
	\begin{equation*}
	\left\{
	\begin{array}{rcl}
		dx_0 &=& \Big(A_0x_0(t)+B_0z(t)+C_0u_0(x_0(t),t)\Big)dt+\sigma_0dW_0(t),\\
		x_0(0) &=& \xi_0.\\
		dx_1 &=& \Big(A_1x_1(t)+B_1z(t)+C_1u_1(x_1(t),t)+Dx_0(t)\Big)dt+\sigma_1 dW_1(t),\\
		x_1(0) &=& \xi_1.\\
	\end{array}
	\right.
	\end{equation*}
	To simplify,  the matrices $A_0,C_0,B_0,\sigma_0$; $A_1,C_1,B_1,D,\sigma_1$ are assumed to be constant though the case with time dependent and deterministic function are similar. 
	For if the dominating player did not exist, it is customary to consider $z(t)$ as deterministic, and the equilibrium condition is 
		\begin{equation}
		\label{fxpt_1}
			z(t) = \mathbb{E}x_1(t).
		\end{equation}
	We can first find an optimal stochastic control for the representative agent given $z$, then solve for the fixed point equation (\ref{fxpt_1}).
	However, one cannot assume $z(t)$ to be deterministic when the dominating player exists, which induces a two-layer problem. Since the dominating player can directly influence the mean field term in the present setting, $z(t)$ should be adapted to the filtration $ \mathcal{F}_t^0$. The equilibrium condition hence is 
		\begin{equation}
		\label{eql}
			z(t) = \mathbb{E}^{\mathcal{F}_t^0}x_1(t)=\int \xi m_{x_0(t)}(\xi,t)d\xi.
		\end{equation} 	
	We define control problems for both the dominating player and the representative agent and fist solve for the control problem of the representative agent as if both $x_0(t)$ and $z(t)$ as exogenous. The next problem is to solve the equilibrium condition (\ref{eql}) as a fixed point property. Finally we solve for the control problem of the dominating player, but now $z(t)$ is regarded as endogenous.

	For any vector $v$ and matrix $M$ with appropriate dimensions, we write the inner product $v^*Mv$ as $|v|^2_M$ for simplicity. Problems \ref{P1}, \ref{P2} and \ref{P3} can now be rewritten in the present Linear Quadratic framework as follows:
	\begin{pblm}{\bf Control of the Representative Agent}\label{P21}\\
	Given the process $x_0$ and $\kappa$, find a control $u_1\in\mathcal{A}_1$ which minimizes the cost functional
	\begin{equation*}
	\begin{split}
		J_1(u_1, x_0, \kappa) &= \mathbb{E}\bigg[\int_{0}^{T}(| x_1(t)-E_1\kappa(t)-Fx_0(t)-\zeta_1|_{Q_1}^{2}+u^{*}_1(t)R_1 u_1(t))dt\\
		&\qquad\qquad+| x_1(T)-\bar{E}_1\kappa(T)-\bar{F}x_0(T)-\bar{\zeta}_1|_{\bar{Q}_1}^{2}\bigg].
	\end{split}	 
	\end{equation*}
	\end{pblm}
	\begin{pblm}{\bf Equilibrium Condition}\label{P22}\\
	Let $x_1:=x_{1,\kappa}$ be the trajectory of the representative agent with the optimal control found in Problem \ref{P21}.	Find the process $z(t)$ such that the fixed point property is satisfied
	\begin{equation*}
		z(t) = \mathbb{E}^{\mathcal{F}_t^0}x_{1,z}(t),
	\end{equation*}
	\end{pblm}
	\begin{pblm}{\bf Control of the Dominating Player}\label{P23}\\
	Find a control $u_0\in\mathcal{A}_0$ which minimizes the cost functional
	\begin{equation*}
	\begin{split}
		J_0(u_0) &= \mathbb{E}\bigg[\int_{0}^{T}(| x_0(t)-E_0z(t)-\zeta_0|_{Q_0}^{2}+u_0^{*}(t)R_0u_0(t))dt\\
		&\qquad\qquad+|x_0(T)-\bar{E}_0z(T)-\bar{\zeta}_0|_{\bar{Q}_0}^{2}\bigg],
	\end{split}
	\end{equation*}
	where $z$ is the solution given in Problem \ref{P22}.
	\end{pblm} 	
	For simplicity, $E_0, F, R_0, R_1, E_1, \zeta_0, \zeta_1, Q_0, Q_1$ are constant matrices and vectors; $Q_0, Q_1, R_0, R_1$ are positive symmetric and
	invertible. Note that Problems \ref{P21} and \ref{P23} are strictly convex quadratic and coercive, we can write the stochastic  principle.
	\begin{lem}{\bf (Control of Representative Agent)}\label{lem21} \\
		Problem \ref{P21} is uniquely solvable and the optimal control $\hat u_1(t)$ is $-R_1^{-1}C_1^*n(t)$, where $n$ satisfies the adjoint process
		\begin{equation*}
		\begin{split}
		\left \{ \begin{array}{rlc}
		-dn &= \Big(A_1^*n(t)+Q_1(x_1(t)-E_1\kappa(t)-Fx_0(t)-\zeta_1)\Big)dt-Z_{n,0}(t)dW_0(t)-Z_{n,1}(t)dW_1(t),\\
		n(T) &= \bar{Q}_1\Big(x_1(T)-\bar{E}_1\kappa(T)-\bar{F}x_0(T)-\bar{\zeta}_1\Big).
		\end{array}\right.
		\end{split}
		\end{equation*}
	\end{lem}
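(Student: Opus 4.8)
The plan is to proceed in two stages: first establish that Problem~\ref{P21} admits a unique minimizer, and then characterize it through the adjoint (co-state) process $n$ via the stochastic maximum principle, exactly in the spirit of the G\^ateaux-derivative computation already carried out in the general theory.

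For existence and uniqueness I would regard $J_1(\cdot,x_0,\kappa)$ as a functional on the Hilbert space $\mathcal{A}_1=\mathcal{L}^2(\Omega\times[0,T];\mathbb{R}^{m_1})$ of $\mathcal{G}$-progressively measurable controls. Since the input data $x_0$ and $\kappa$ are treated as fixed exogenous $\mathcal{F}^0$-adapted processes and the state SDE is linear in $(x_1,u_1)$, the map $u_1\mapsto x_1$ is affine; hence $J_1$ is the sum of a convex quadratic tracking term in $x_1$ and the strictly convex term $\mathbb{E}\int_0^T u_1^*R_1u_1\,dt$ with $R_1>0$. Thus $J_1$ is strictly convex, coercive and lower semicontinuous, and the direct method yields a unique minimizer $\hat u_1\in\mathcal{A}_1$. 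Strict convexity also makes the first-order condition both necessary and sufficient, so it suffices to identify the stationarity relation.

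To obtain the characterization I would compute $\left.\frac{d}{d\theta}\right|_{\theta=0}J_1(\hat u_1+\theta\tilde u_1,x_0,\kappa)=0$ for arbitrary $\tilde u_1\in\mathcal{A}_1$. The variation $\tilde x_1=\left.\frac{d}{d\theta}\right|_{\theta=0}x_1$ solves the linear SDE $d\tilde x_1=(A_1\tilde x_1+C_1\tilde u_1)\,dt$ with $\tilde x_1(0)=0$, since the terms $B_1\kappa$, $Dx_0$ and the diffusion $\sigma_1$ are independent of $u_1$. Introducing the adjoint BSDE for $n$ as stated---driver $A_1^*n+Q_1(x_1-E_1\kappa-Fx_0-\zeta_1)$, terminal datum $\bar Q_1(x_1(T)-\bar E_1\kappa(T)-\bar Fx_0(T)-\bar\zeta_1)$, and two martingale integrands $Z_{n,0},Z_{n,1}$ against $W_0,W_1$---I would apply It\^o's formula to $n^*\tilde x_1$ and take expectation. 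The $A_1$-terms cancel by antisymmetry, the tracking contributions cancel against the driver and terminal condition, and the common factor $2$ from differentiating the quadratics divides out, collapsing the first-order condition to $\mathbb{E}\int_0^T\bigl(R_1\hat u_1+C_1^*n\bigr)^*\tilde u_1\,dt=0$. Since $\tilde u_1$ ranges over all of $\mathcal{A}_1$ this forces $R_1\hat u_1+C_1^*n=0$ a.e., i.e. $\hat u_1=-R_1^{-1}C_1^*n$, the asserted feedback.

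The step I expect to demand the most care is the co-state/forward coupling rather than the variational calculus. Because $u_1$ is adapted to $\mathcal{G}_t=\mathcal{F}^0_t\vee\mathcal{F}^1_t$, the adjoint $n$ is a genuinely two-noise BSDE, so the martingale representation must produce \emph{both} integrands $Z_{n,0}$ and $Z_{n,1}$; well-posedness is nonetheless standard since the driver is affine in $(n,Z)$ with constant, hence bounded, coefficients, so Pardoux--Peng theory applies once the forward trajectory $x_1$ is fixed. The substantive point is that substituting $\hat u_1=-R_1^{-1}C_1^*n$ back into the state equation couples the forward $x_1$ and the backward $n$ into a linear forward--backward SDE, and it is the unique solvability of this system---equivalently, the realizability of the variational candidate---that truly underlies the phrase ``uniquely solvable.'' I would justify it by the strict convexity and coercivity already invoked, or defer it to the Riccati reduction developed in the next section.
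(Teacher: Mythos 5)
Your proposal is correct and follows essentially the same route as the paper: a perturbation $\hat u_1+\theta\tilde u_1$, the Euler (first-order) condition, the adjoint BSDE for $n$, It\^o's formula applied to $n^*\tilde x_1$ to collapse the condition to $\mathbb{E}\int_0^T(R_1\hat u_1+C_1^*n)^*\tilde u_1\,dt=0$, and hence the feedback $\hat u_1=-R_1^{-1}C_1^*n$. The paper likewise settles unique solvability and sufficiency by appealing to the convexity and coerciveness of $J_1$ (stated after the FBSDE system rather than up front, as you do), so your reordering and your added remarks on the two-noise martingale representation are refinements of the same argument, not a different one.
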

	\begin{proof}
		Consider a perturbation of the optimal control $\hat u_1+\theta \tilde{u}_1$, where $\tilde{u}_1$ is adapted to the filtration $\mathcal{G}_t$. The original state $x_1$ becomes $x_1+\theta \tilde{x}_1$ with
		\begin{equation*}
		\begin{split}
		\left \{ \begin{array}{rlc}
		d\tilde{x}_1 &= \Big(A_1\tilde{x}_1(t)+C_1\tilde{u}_1(t)\Big)dt,\\
		\tilde{x}_1(0) &= 0.
		\end{array}\right.
		\end{split}
		\end{equation*}
		the optimality of $\hat u_1$ is expressed by the following Euler condition
		\begin{equation*}
		\begin{split}
			0&=\at{\dfrac{d}{d\theta}}{\theta=0}J_1(\hat u_1+\theta\tilde{u}_1,x_0,\kappa) \\
			&=\mathbb{E}\bigg[\int_0^T [\tilde{x}_1^*(t)Q_1(x_1(t)-E_1\kappa(t)-Fx_0(t)-\zeta_1)+\tilde{u}_1^*(t)R_1\hat u_1(t)]dt\\
			&\qquad\qquad+\tilde{x}_1^*(T)\bar{Q}_1(x_1(T)-\bar{E}_1\kappa(T)-\bar{F}x_0(T)-\bar{\zeta}_1)\bigg]\\
		\end{split}
		\end{equation*} 
		On the other hand, we have
		\begin{equation*}
		\begin{split}
			d(n^*\tilde{x}_1)&=\Big(n^*(t)C_1\tilde{u}_1(t)-\tilde{x}_1^*(t)Q_1(x_1(t)-E_1\kappa(t)-Fx_0(t)-\zeta_1)\Big)dt\\
			&\qquad+\tilde{x}_1(t)Z_{n,0}(t)dW_0(t)+\tilde{x}_1(t)Z_{n,1}(t)dW_1(t).
		\end{split}
		\end{equation*}
		Integrate both side and take expectation, combining with the Euler condition, it becomes
		\begin{equation*}
			\mathbb{E}\int_0^T(n^*(t)C_1+\hat u_1^*(t)R_1)\tilde{u}_1(t)dt = 0,\quad \forall \tilde{u}_1(\cdot)\text{ adpated to } \mathcal{G},
		\end{equation*} 
		which implies, together with an application of tower property,
		\begin{equation}
			\hat u_1(t) = -R_1^{-1}C_1^*n(t).
		\end{equation} 
		Therefore the stochastic maximum principle for the representative agent is expressed by the system
		\begin{equation}
		\label{x1n}
		\left\{
		\begin{array}{rlc}
			dx_1 &= \Big(A_1x_1(t)+B_1\kappa(t)-C_1R_1^{-1}C_1^*n(t)+Dx_0(t)\Big)dt+\sigma_1 dW_1(t), \\
			x_1(0) &= \xi_1. \\
			-dn &= \Big(A_1^*n(t)+Q_1(x_1(t)-E_1\kappa(t)-Fx_0(t)-\zeta_1)\Big)dt-Z_{n,0}(t)dW_0(t)-Z_{n,1}(t)dW_1(t),\\
			n(T) &= \bar{Q}_1\Big(x_1(T)-\bar{E}_1\kappa(T)-\bar{F}x_0(T)-\bar{\zeta}_1\Big).
		\end{array}
		\right.
		\end{equation}
		and hence for every exogenous pair $(x_0, \kappa)$, this system defines a unique pair $(x_1,n)$. By the convexity and coerciveness of the cost functional, $\hat u_1$ is uniquely defined and the sufficient condition is automatically satisfied. 
	  \qed \end{proof}
	\begin{rk}
	The optimal control has the representation $-R_1^{-1}C_1^*n(t)=-R_1^{-1}C_1^*(P_tx_1(t)+g(t))$, where $P$ satisfies the symmetric Riccati equation
	\begin{equation*}
		\left\{
		\begin{array}{rl}
			&\dot{P_t}+P_tA_1+A_1^*P_t-P_tC_1R_1^{-1}C_1^*P_t+Q_1=0,\\
			&P_T=\bar{Q}_1;\\
		\end{array}
		\right.\\
	\end{equation*}	
	and $g$ satisfies the BSDE
	\begin{equation*}
		\left \{ 
		\begin{array}{rcl}
			-dg &=& \Big((A_1-C_1R_1^{-1}C_1^*P_t)^*g(t)+(P_tB_1-Q_1E_1)+(P_tD-Q_1F)x_0(t) \kappa(t)-Q_1\zeta_1\Big)dt\\
			&&\qquad-Z_{g,0}(t)dW_0(t)-Z_{g,1}(t)dW_1(t),\\
			g(T)&=&\bar{Q}_1\Big(-\bar{E}_1\kappa(T)-\bar{F}x_0(T)-\bar{\zeta}_1\Big).\\
		\end{array}
		\right.
	\end{equation*}		
	\end{rk}
	To obtained the equilibrium condition stated in Problem (\ref{P22}), we simply take expectation conditional on $\mathcal{F}_t^0$ on both sides of Equation (\ref{x1n}). By requiring $\mathbb{E}^{\mathcal{F}^0_t}x_1(t)=z(t)$, and replace $\kappa(t)$ by $z(t)$, we have the following system, which is analogical to the SHJB-FP (\ref{HJBFP}) in Section \ref{gen}.
	\begin{equation}
		\left\{
		\begin{array}{rlc}
			dz &= \Big((A_1+B_1)z(t)-C_1R_1^{-1}C_1^*m(t)+Dx_0(t)\Big)dt+\sigma_1 dW_1(t), \\
			z_1(0) &= \mathbb{E}[\xi_1]. \\
			-dm &= \Big(A_1^*m(t)+Q_1(I-E_1)z(t)-Q_1Fx_0(t)-Q_1\zeta_1\Big)dt-Z_m(t)dW_0(t),\\
			m(T) &= \bar{Q}_1(I-\bar{E}_1)z(T)-\bar{Q}_1\bar{F}x_0(T)-\bar{Q}_1\bar{\zeta}_1.
		\end{array}
		\right.
	\end{equation}	
	We next proceed on the problem of the dominating player.
	\begin{lem}{\bf (Control of the Dominating Player)}\label{lem23}\\ 
	Problem \ref{P23} is uniquely solvable and the optimal control $\hat u_0(t)$ is $-R_0^{-1}C_0^*p(t)$, where $p$ satisfies
	\begin{equation}
	\left\{ \begin{array}{rcl}
		-dp &=& \Big(A_0^*{p}(t)+D^*q(t)-(Q_1F)^*r(t)+Q_0(x_0(t)-E_0z(t)-\zeta_0)\Big)dt-Z_p(t)dW_0(t),\\
		{p}(T) &=&\bar{F}^* \bar{Q}_1 r(T)+ \bar{Q}_0(x_0(T)-\bar{E}_0z(T)-\bar{\zeta}_0).\\
		-dq &=& \Big((A_1+B_1)^*q(t)+B_0^*{p}(t)+(I-E_1)^* Q_1^* r(t)-E_0^*Q_0(x_0(t)-E_0z(t)-\zeta_0)\Big)dt-Z_q(t)dW_0(t),\\
		q(T) &=& (I-\bar{E}_1)^*\bar{Q}_1 r(T)-\bar{E}_0^*\bar{Q}_0(x_0(t)-\bar{E}_0z(t)-\bar{\zeta}_0).\\
		dr &=& \Big(A_1r(t)-C_1R_1^{-1}C_1^*q(t)\Big)dt,\\
		{r}(0) &=& 0.
	\end{array}\right.
	\end{equation}
	\end{lem}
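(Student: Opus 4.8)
The plan is to combine a convexity/coercivity argument for well-posedness with a stochastic-maximum-principle (adjoint) computation for the characterization, in complete parallel with the treatment of the representative agent in Lemma \ref{lem21} and the general-theory computation in Lemma \ref{LEMP3}. First I would settle existence and uniqueness. Once $u_0$ (and hence $x_0$) is fixed, the equilibrium requirement of Problem \ref{P22} is encoded by the coupled \emph{linear} forward-backward system for $(z,m)$ displayed just before the lemma, in which $x_0$ enters affinely; its unique solution makes $z$ an affine, continuous functional of the trajectory $x_0$, and $x_0$ is in turn affine in $u_0$ through its own linear SDE. Hence $u_0\mapsto(x_0,z)$ is affine, so $J_0$ is a quadratic functional of $u_0\in\mathcal A_0$ which, since $R_0>0$ and $Q_0,\bar Q_0\ge 0$, is strictly convex and coercive. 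A unique minimiser $\hat u_0$ therefore exists, and any control meeting the first-order condition below is automatically optimal.

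Next I would perturb $\hat u_0\mapsto\hat u_0+\theta\tilde u_0$ with $\tilde u_0\in\mathcal A_0$ and differentiate the whole coupled system at $\theta=0$. Writing $\tilde x_0,\tilde z,\tilde m$ for the G\^ateaux derivatives, the linearised equations are $d\tilde x_0=(A_0\tilde x_0+B_0\tilde z+C_0\tilde u_0)dt$ with $\tilde x_0(0)=0$, $d\tilde z=(D\tilde x_0+(A_1+B_1)\tilde z-C_1R_1^{-1}C_1^*\tilde m)dt$ with $\tilde z(0)=0$, and the backward equation $-d\tilde m=(A_1^*\tilde m+Q_1(I-E_1)\tilde z-Q_1F\tilde x_0)dt-\tilde Z_m\,dW_0$ with $\tilde m(T)=\bar Q_1(I-\bar E_1)\tilde z(T)-\bar Q_1\bar F\tilde x_0(T)$. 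Differentiating $J_0$ gives the Euler condition $0=\mathbb E\big[\int_0^T((\tilde x_0-E_0\tilde z)^*Q_0(x_0-E_0z-\zeta_0)+\tilde u_0^*R_0\hat u_0)dt+(\tilde x_0(T)-\bar E_0\tilde z(T))^*\bar Q_0(x_0(T)-\bar E_0z(T)-\bar\zeta_0)\big]$.

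I would then introduce three adjoint processes, $p$ (backward, dual to $x_0$), $q$ (backward, dual to $z$), and $r$ (forward, dual to the backward variable $m$, with $r(0)=0$), and compute the It\^o differential of the bilinear pairing $p^*\tilde x_0+q^*\tilde z-r^*\tilde m$. The coefficients of the adjoint equations are dictated by requiring that, in the drift, every term containing $\tilde x_0$, $\tilde z$, or $\tilde m$ cancel except for the running-cost source $-(x_0-E_0z-\zeta_0)^*Q_0(\tilde x_0-E_0\tilde z)$: cancelling the $\tilde x_0$-terms forces the $D^*q$ and $(Q_1F)^*r$ entries of the $p$-equation, cancelling the $\tilde z$-terms forces the $B_0^*p$ and $(I-E_1)^*Q_1^*r$ entries of the $q$-equation, and the two $C_1R_1^{-1}C_1^*\tilde m$ contributions (one from the $z$-dynamics through $q$, one from the $m$-dynamics through $r$) annihilate precisely when $r$ solves $dr=(A_1r-C_1R_1^{-1}C_1^*q)dt$. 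Matching the boundary terms at $t=T$ against the terminal cost pins down the terminal/initial data of $p,q,r$. Integrating, taking expectations, and substituting into the Euler condition collapses everything to $\mathbb E\int_0^T\tilde u_0^*(R_0\hat u_0+C_0^*p)\,dt=0$; since $\tilde u_0$ is arbitrary this yields $\hat u_0=-R_0^{-1}C_0^*p$, and strict convexity upgrades the necessary condition to sufficiency.

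The main obstacle is the bookkeeping of this three-way duality: unlike the single-adjoint situation of Lemma \ref{lem21}, the mean-field coupling makes the adjoint system itself a forward-backward system ($p,q$ backward, $r$ forward), and one must verify that the interior cancellations and the terminal matching are \emph{simultaneously} consistent with the correct signs, in particular that the cross terms $C_1R_1^{-1}C_1^*$, which originate from the representative agent's optimal feedback already embedded in the $z$-dynamics, are exactly cancelled by the $r$-equation. A secondary point I would only flag rather than resolve here is the solvability of this adjoint system; that is the content of the Riccati decoupling carried out in Section \ref{fxpt}, so for the present necessary-condition statement the adjoint system may be taken as given.
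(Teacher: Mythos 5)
Your proposal is correct and follows essentially the same route as the paper: the same perturbation $\hat u_0+\theta\tilde u_0$, the same linearised system for $(\tilde x_0,\tilde z,\tilde m)$, the same Euler condition, and the identical duality pairing $p^*\tilde x_0+q^*\tilde z-r^*\tilde m$ whose drift collapses to $p^*C_0\tilde u_0$ minus the running-cost term, yielding $\hat u_0=-R_0^{-1}C_0^*p$ via the arbitrariness of $\tilde u_0$ and the tower property. Your opening well-posedness argument (affinity of $u_0\mapsto(x_0,z)$ plus strict convexity and coercivity) and your closing remark deferring solvability of the adjoint system to the Riccati analysis of Section \ref{fxpt} simply make explicit what the paper states in the remark preceding Lemma \ref{lem21} and in Theorem \ref{main2}.
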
	
	\begin{proof}
		Consider $\hat u_0+\theta\tilde{u}_0$ the perturbation of the optimal control, where $\tilde{u}_0$ is adapted to the filtration $\mathcal{F}_t^0$. The original states $x_0$, $z$, $m$ become $x_0+\theta\tilde{x}_0$, $z+\theta\tilde{z}$, $m+\theta\tilde{m} $ with
		\begin{equation*}
		\left\{ \begin{array}{rcl} 
			d\tilde{x}_0 &=& \Big(A_0\tilde{x}_0(t)+B_0\tilde{z}(t)+C_0\tilde{u}_0(t)\Big)dt,\\
			\tilde{x}(0) &=& 0.\\
			d\tilde{z}&=& \Big((A_1+B_1)\tilde{z}(t)-C_1R_1^{-1}C_1^* \tilde{m}(t)+D\tilde{x}_0(t)\Big)dt,\\
			\tilde{z}(0) &=& 0.\\
			-d\tilde{m}&=&\Big(A_1^*\tilde{m}(t)+Q_1(I-E_1)\tilde{z}(t)-Q_1F\tilde{x}_0(t)\Big)dt-Z_{\tilde{m}}dW_0(t)\\
			\tilde m(T)&=&\bar{Q}_1(I-\bar{E}_1)\tilde{z}(T)-\bar{Q}_1\bar{F}\tilde{x}_0(T).\\
		\end{array}\right.
		\end{equation*}
		The corresponding maximum principle for $\hat u_0$ is
		\begin{equation}
		\label{opu_0}
		\begin{split}
			0&=\at{\dfrac{d}{d\theta}}{\theta=0}J_0(\hat u_0+\theta\tilde{u}_0) \\
			&=\mathbb{E}\bigg[\int_{0}^T[(\tilde{x}_0(t)-E_0\tilde{z}(t))^* Q_0 (x_0(t)-E_0z(t)-\zeta_0)+\hat u_0^*R_0\tilde{u}_0]dt\\
			&\qquad\qquad+(\tilde{x}_0(T)-\bar{E}_0\tilde{z}(T))^* Q_0 (x_0(T)-\bar{E}_0z(T)-\bar{\zeta_0})\bigg]
		\end{split}
		\end{equation} 
		On the other hand, we can easily check that		\begin{equation*}
		\begin{split}
			d({p}^*\tilde{x}_0+q^*\tilde{z}-r^* \tilde{m})=\Big({p}^*(t)C_0\tilde{u}_0(t)-(\tilde{x}_0(t)-E_0\tilde{z}(t))^*Q_0(x_0(t)-E_0z(t)-\zeta_0)\Big)dt+\{\dots\}dW_0(t).
		\end{split}
		\end{equation*}
		Integrating and also taking expectation on both sides of the last equation, together with an application of (\ref{opu_0}), we deduce that 
		\begin{equation*}
		\begin{split}
			\mathbb{E}\int_0^T ({p}^*C_0+\hat u_0^*R_0)\tilde{u}_0 dt = 0,\quad \forall \tilde{u}_0\text{ adapted to }\mathcal{F}_t^0,
		\end{split}
		\end{equation*} 
		which implies the desired result by again the application of tower property,
		\begin{equation}
		\label{u0}
			\hat u_0(t) = -R_0^{-1}C_0^*p(t)(t).
		\end{equation}
	 \qed \end{proof}
	
	Summarizing the results we obtained so far, we present the main theorem in this section.
	\begin{thm}
	\label{main2}
	The necessary and sufficient conditions for the unique existence of the solution to Problems \ref{P21}, \ref{P22} and \ref{P23} are described by the following six equations in matrix form
	\begin{equation}
	\label{FBSDE}
	\begin{array}{rl}	
		&\left\{
		\begin{array}{rcl}
		d\begin{pmatrix} x_0 \\r \\z \end{pmatrix} &=&	\left\{\begin{pmatrix} A_0 & 0 & B_0 \\ 0 & A_1 & 0 \\ D & 0 & A_1+B_1 \end{pmatrix}\begin{pmatrix} x_0(t) \\r(t) \\z(t) \end{pmatrix}\right.\\
		\quad&&\left.-\begin{pmatrix} C_0R_0^{-1}C_0^* & 0 & 0 \\ 0 & C_1R_1^{-1}C_1^* & 0\\ 0 & 0 & C_1R_1^{-1}C_1^* \end{pmatrix}\begin{pmatrix} p(t) \\ q(t) \\ m(t) \end{pmatrix}\right\}dt+\begin{pmatrix} \sigma_0 \\ 0 \\ 0   \end{pmatrix}dW_0,\\
		\begin{pmatrix} x_0(0) \\r(0) \\z(0) \end{pmatrix}&=&\begin{pmatrix} \xi_0 \\0 \\ \mathbb{E}[\xi_1] \end{pmatrix}.\\
		\end{array}
		\right. \\
		\\
		&\left\{
		\begin{array}{rcl}
		-d\begin{pmatrix} p \\q \\m \end{pmatrix} &=&	\left\{\begin{pmatrix} A_0^* & D^* & 0 \\ B_0^* & A_1^*+B_1^* & 0 \\ 0 & 0 & A_1^* \end{pmatrix}\begin{pmatrix} p(t) \\q(t) \\m(t) \end{pmatrix}\right.\\
		&&\left. +\begin{pmatrix} Q_0 & -(Q_1F)^* & -Q_0E_0 \\ -(Q_0E_0)^* & (Q_1(I-E_1))^* & E_0^* Q_0 E_0\\ -Q_1 F & 0 & Q_1(I-E_1) \end{pmatrix}\begin{pmatrix} x_0(t) \\ r(t) \\ z(t) \end{pmatrix}+\begin{pmatrix} -Q_0\zeta_0 \\ E_0^*Q_0\zeta_0 \\ -Q_1\zeta_1 \end{pmatrix}\right\}dt-
		\begin{pmatrix}
		Z_p(t)\\Z_q(t)\\Z_m(t)
		\end{pmatrix}dW_0(t),\\
		\begin{pmatrix} p(T) \\q(T) \\m(T) \end{pmatrix}&=&\begin{pmatrix} \bar{Q}_0 & -(\bar{Q}_1\bar{F})^* & -\bar{Q}_0\bar{E}_0 \\ -\bar{Q}_0\bar{E}_0)^* & (\bar{Q}_1(I-\bar{E}_1))^* & \bar{E}_0^* \bar{Q}_0 \bar{E}_0\\ \bar{Q}_1 F & 0 & \bar{Q}_1(I-\bar{E}_1) \end{pmatrix}\begin{pmatrix} x_0(t) \\ r(t) \\ z(t) \end{pmatrix}+\begin{pmatrix} -\bar{Q}_0\bar{\zeta}_0 \\ \bar{E}_0^*\bar{Q}_0\bar{\zeta}_0 \\ -\bar{Q}_1\bar{\zeta}_1 \end{pmatrix}.\\
		\end{array}
		\right.
	\end{array}
	\end{equation}
	\end{thm}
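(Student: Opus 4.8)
The plan is to assemble the three characterizations already obtained in Lemmas \ref{lem21} and \ref{lem23} together with the equilibrium reduction, and then to promote the first-order (necessary) conditions to necessary-and-sufficient ones by invoking the strict convexity and coercivity of both quadratic cost functionals. First I would recall from Lemma \ref{lem21} that, for fixed exogenous $(x_0,\kappa)$, optimality of the representative agent is characterized by the forward-backward pair $(x_1,n)$ in (\ref{x1n}) with $\hat u_1=-R_1^{-1}C_1^*n$; because $J_1$ is strictly convex and coercive in $u_1$, this first-order condition is also sufficient and the optimizer is unique, so nothing is lost in passing to the adjoint system.

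Next I would carry out the equilibrium projection onto $\mathcal F^0_t$. Setting $z(t):=\mathbb E^{\mathcal F^0_t}[x_1(t)]$ and $m(t):=\mathbb E^{\mathcal F^0_t}[n(t)]$ and imposing the fixed-point constraint (\ref{eql}), i.e. replacing $\kappa$ by $z$, I would take $\mathbb E^{\mathcal F^0_t}$ through (\ref{x1n}). Since $W_1$ and $\xi_1$ are independent of $\mathcal F^0$, the martingale contribution $\sigma_1\,dW_1$ averages out, leaving the $\mathcal F^0$-adapted pair $(z,m)$: a forward SDE for $z$ coupled to $x_0$ and a BSDE for $m$ carrying only a $dW_0$ martingale term. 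This is exactly the system displayed immediately before Lemma \ref{lem23} and is the linear-quadratic analogue of the SHJB-FP coupling (\ref{HJBFP}).

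Then I would incorporate the dominating player's problem via Lemma \ref{lem23}: substituting $\hat u_0=-R_0^{-1}C_0^*p$ into the $x_0$-dynamics and appending the adjoint triple $(p,q,r)$ furnished there, where $p,q$ are the backward adjoints associated with $x_0$ and $z$ while $r$ is the forward adjoint (with $r(0)=0$) attached to the $m$-equation. Collecting the forward variables $(x_0,r,z)$ with initial data $(\xi_0,0,\mathbb E[\xi_1])$ and the backward variables $(p,q,m)$ with the stated terminal data, and reading off the drift coefficients block by block, yields the matrix FBSDE (\ref{FBSDE}). The equivalence is two-directional: any admissible optimal triple of controls produces a solution of (\ref{FBSDE}), and conversely any solution of (\ref{FBSDE}) reconstructs controls satisfying the sufficient optimality conditions; strict convexity and coercivity of $J_0$ and $J_1$ then guarantee that this solution, hence the equilibrium, is unique.

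The main obstacle I anticipate is twofold. The routine but error-prone part is verifying that every block entry and sign in the two $3\times 3$ coefficient matrices and the terminal-condition matrices of (\ref{FBSDE}) matches the adjoint equations of Lemmas \ref{lem21} and \ref{lem23} after the $z$-substitution; in particular the cross terms $-(Q_1F)^*$, $-Q_0E_0$ and $E_0^*Q_0E_0$ must be tracked carefully through the G\^ateaux differentiation used to derive the dominating player's adjoints. The more delicate conceptual point is the rigorous justification of the conditional-expectation reduction and, ultimately, the well-posedness of the fully coupled forward-backward system: although convexity yields uniqueness of each optimizer, establishing that the coupled fixed-point-plus-control problem admits a genuine solution is precisely what the Riccati/decoupling analysis of Section \ref{fxpt} is designed to settle, so here I would only reduce well-posedness to that subsequent construction.
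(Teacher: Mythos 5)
Your proposal is correct and follows essentially the same route as the paper: the paper obtains Theorem \ref{main2} precisely by assembling Lemma \ref{lem21}, the conditional-expectation/equilibrium reduction (replacing $\kappa$ by $z=\mathbb{E}^{\mathcal{F}^0_t}x_1$ and $m=\mathbb{E}^{\mathcal{F}^0_t}n$), and Lemma \ref{lem23} into matrix form, with strict convexity and coercivity upgrading the first-order conditions to necessary-and-sufficient ones. Your deferral of the well-posedness of the coupled FBSDE to the Riccati analysis of Section \ref{fxpt} likewise matches the paper's organization.
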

	\begin{rk}
	One can easily compare these six equations with those stated in Theorem \ref{main}. We obtain the same results by applying the general theory, however, it is more convenient to acquire these six equations directly under the Linear Quadratic setting, which also illuminates the power of using the principle of separation.
	On comparison with the intermediary result obtained in \cite{HN2}. The latter work did not  take account of the third adjoint equation $r$ since it fails to consider the impact on $m$ with respect to the change of the control of the dominating player.
	\end{rk}
\section{Fixed Point Problem}\label{fxpt}
	In this section, we provide a sufficient condition, which solely depends on the coefficients of the mean field game system, for the unique existence of the solution to Problems \ref{P21}, \ref{P22} and \ref{P23} by means of tackling a {\it non-symmetric Riccati equation}. To facilitate our argument, we define
	\begin{equation*}
		{\bf x}:=\begin{pmatrix} x_0 \\ r \\ z \end{pmatrix}; \quad
		{\bf p}:=\begin{pmatrix} p \\ q \\ m \end{pmatrix}; \quad
		{\bf Z}:=\begin{pmatrix} Z_p \\ Z_q \\ Z_m \end{pmatrix}.
	\end{equation*}
	Hence we can write (\ref{FBSDE}) as 
	\begin{equation}
	\label{FBSDE2}
	\begin{array}{rl}
		&\left\{
		\begin{array}{rcl}
			d{\bf x}&=&\Big((\mathcal{A}+\mathcal{B}){\bf x}(t)-\mathcal{C}{\bf p}(t)\Big)dt+\sigma dW_0(t),\\
			{\bf x}(0)&=&\xi.\\
		\end{array}
		\right.
		\\
		&\left\{
		\begin{array}{rcl}
			-d{\bf p}&=&\Big(\mathcal{A}^* {\bf p}(t) + (\mathcal{Q}+\mathcal{S}){\bf x}(t)+{\bf k}\Big)dt-{\bf Z}(t)dW_0(t),\\
			{\bf p}(T)&=&(\bar{\mathcal{Q}}+\bar{\mathcal{S}}){\bf x}(T)+\bar{\bf k},\\
		\end{array}
		\right.
	\end{array}
	\end{equation}
	where
	\begin{equation*}
	\begin{array}{l}
		\mathcal{A}:=\begin{pmatrix} A_0 & B_0 & 0 \\ D & A_1+B_1 & 0 \\ 0 & 0 & A_1 \end{pmatrix}; \quad
		\mathcal{B}:=\begin{pmatrix} 0 & -B_0 & B_0 \\ -D & -B_1 & 0 \\ D & 0 & B_1 \end{pmatrix}; \quad
		\mathcal{C}:=\begin{pmatrix} C_0R_0^{-1}C_0^* & 0 & 0 \\ 0 & C_1R_1^{-1}C_1^* & 0\\ 0 & 0 & C_1R_1^{-1}C_1^* \end{pmatrix};\\
		
		{\sigma}:=\begin{pmatrix} \sigma_0 \\ 0 \\ 0 \end{pmatrix}; \quad 
		\xi:=\begin{pmatrix} \xi _0 \\ 0 \\ 0 \end{pmatrix}; \quad 
		\mathcal{Q}+\mathcal{S}:=\begin{pmatrix} Q_0 & -F^*Q_1 & -Q_0E_0 \\ -E_0^*Q_0 & \quad(I-E_1)^*Q_1^* & E_0^* Q_0 E_0\\ -Q_1 F & 0 & \quad Q_1(I-E_1) \end{pmatrix}; \quad
		{\bf k}:=\begin{pmatrix} -Q_0\zeta_0 \\ E_0^*Q_0\zeta_0 \\ -Q_1\zeta_1 \end{pmatrix};\\
		\bar{\mathcal{Q}}+\bar{\mathcal{S}}:=\begin{pmatrix} \bar{Q}_0 & -(\bar{Q}_1\bar{F})^* & -\bar{Q}_0\bar{E}_0 \\ -\bar{Q}_0\bar{E}_0)^* & (\bar{Q}_1(I-\bar{E}_1))^* & \bar{E}_0^* \bar{Q}_0 \bar{E}_0\\ \bar{Q}_1 F & 0 & \bar{Q}_1(I-\bar{E}_1) \end{pmatrix}; \quad 
		\bar{\bf k}:=\begin{pmatrix} -\bar{Q}_0\bar{\zeta}_0 \\ \bar{E}_0^*\bar{Q}_0\bar{\zeta}_0 \\ -\bar{Q}_1\bar{\zeta}_1 \end{pmatrix},
	\end{array}
	\end{equation*}
	where $\mathcal{Q}$ and $\bar{\mathcal{Q}}$ are positive matrices. Consider the following {\it non-symmetric Riccati equation}
	\begin{equation}
	\label{NSRE}
	\left\{
	\begin{array}{l}
		\dot{\Gamma}+\mathcal{A}^*\Gamma_t+\Gamma_t(\mathcal{A}+\mathcal{B})-\Gamma_t \mathcal{C} \Gamma_t+(\mathcal{Q}+\mathcal{S})=0,\\
		\Gamma_T=\bar{\mathcal{Q}};\\
	\end{array}
	\right.
	\end{equation}
	and the backward ODE
	\begin{equation}
	\left\{
	\begin{array}{rcl}
		-d{\bf g}&=&\Big((\mathcal{A}^*-\Gamma_t\mathcal{B}){\bf g}(t)+{\bf k}\Big)dt,\\
		{\bf g}(T)&=&\bar{\bf k}.
	\end{array}
	\right.
	\end{equation}	
	It is easy to check that ${\bf p}(t)=\Gamma_t {\bf x}(t)+{\bf g}(t)$.
	With respect to this affine form, the forward backward equation (\ref{FBSDE2}) admits a unique solution if and only if (\ref{NSRE}) admits a unique solution. In accordance with Theorem 2.4.3 in Ma and Young \cite{FBSDE} or \cite{AB}, we have the following proposition.
	\begin{prop}
		Suppose the following forward-backward ordinary differential equations
		\begin{equation*}
			\begin{array}{rl}
				&\left\{
				\begin{array}{rcl}
					\dfrac{dX}{dt}&=&(\mathcal{A}+\mathcal{B})X(t)-\mathcal{C}Y(t)\\
					X(0)&=&0.\\
				\end{array}
				\right.
				\\
				&\left\{
				\begin{array}{rcl}
					-\dfrac{dY}{dt}&=&\mathcal{A}^*Y(t) + (\mathcal{Q}+\mathcal{S})X(t),\\
					Y(T)&=&(\bar{\mathcal{Q}}+\bar{\mathcal{S}})X(T).\\
				\end{array}
				\right.
			\end{array}
		\end{equation*}
		admits a unique solution for any $t_0 \in [0,T]$. Then there is a unique solution of (\ref{NSRE}).
	\end{prop}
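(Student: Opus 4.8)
The plan is to exploit the one-to-one correspondence between the \emph{decoupling field} of the linear forward-backward system and the solution of the non-symmetric Riccati equation (\ref{NSRE}), following the transition-matrix argument underlying Theorem 2.4.3 in \cite{FBSDE}. I read the hypothesis as saying that, for every starting time $t_0\in[0,T]$, the forward-backward system posed on $[t_0,T]$ with homogeneous initial datum $X(t_0)=0$ has a unique solution; by linearity this means its \emph{only} solution is the trivial one. The heart of the argument is to convert this non-degeneracy into the invertibility of a single matrix built from the fundamental solution, which is exactly what is needed both to construct $\Gamma$ and to forbid its finite-time blow-up.

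First I would freeze $t_0\in[0,T]$ and write the system on $[t_0,T]$ as the $2\times2$ block linear equation
\begin{equation*}
\frac{d}{dt}\begin{pmatrix}X\\Y\end{pmatrix}=\mathcal{H}\begin{pmatrix}X\\Y\end{pmatrix},\qquad
\mathcal{H}:=\begin{pmatrix}\mathcal{A}+\mathcal{B}&-\mathcal{C}\\-(\mathcal{Q}+\mathcal{S})&-\mathcal{A}^{*}\end{pmatrix},
\end{equation*}
whose transition matrix $\Phi(\cdot,\cdot)$ is globally defined; write it in blocks $\Phi_{ij}$. Propagating data $(X(t_0),Y(t_0))=(\eta,y)$ to time $T$ and imposing $Y(T)=(\bar{\mathcal{Q}}+\bar{\mathcal{S}})X(T)$ gives the linear relation
\begin{equation*}
\big[\Phi_{22}(T,t_0)-(\bar{\mathcal{Q}}+\bar{\mathcal{S}})\Phi_{12}(T,t_0)\big]\,y
=\big[(\bar{\mathcal{Q}}+\bar{\mathcal{S}})\Phi_{11}(T,t_0)-\Phi_{21}(T,t_0)\big]\,\eta .
\end{equation*}
Denoting the left bracket by $M(t_0)$, the hypothesis applied with $\eta=0$ states precisely that $M(t_0)y=0$ forces $y=0$; since $M(t_0)$ is square, it is invertible for every $t_0$. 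I would then set $\Gamma(t_0):=M(t_0)^{-1}\big[(\bar{\mathcal{Q}}+\bar{\mathcal{S}})\Phi_{11}(T,t_0)-\Phi_{21}(T,t_0)\big]$, which is single-valued and, as a composition of the $C^{1}$ map $t_0\mapsto\Phi(T,t_0)$ with matrix inversion, is $C^{1}$ on all of $[0,T]$.

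Next I would verify that $\Gamma$ solves (\ref{NSRE}). By construction $Y(t)=\Gamma(t)X(t)$ holds along every solution; differentiating this identity, substituting the two ODEs together with $Y=\Gamma X$, and cancelling $X(t)$ (legitimate since the initial datum $\eta$, hence $X(t)$, is arbitrary) yields
\begin{equation*}
\dot\Gamma+\mathcal{A}^{*}\Gamma+\Gamma(\mathcal{A}+\mathcal{B})-\Gamma\mathcal{C}\Gamma+(\mathcal{Q}+\mathcal{S})=0 ,
\end{equation*}
while $\Phi(T,T)=I$ gives $M(T)=I$ and hence $\Gamma(T)=\bar{\mathcal{Q}}+\bar{\mathcal{S}}$, the prescribed terminal value; this is exactly (\ref{NSRE}). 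For uniqueness I would argue conversely: any solution $\tilde\Gamma$ of (\ref{NSRE}) decouples the system via $Y=\tilde\Gamma X$ (solve the forward linear ODE $\dot X=[(\mathcal{A}+\mathcal{B})-\mathcal{C}\tilde\Gamma]X$ and set $Y=\tilde\Gamma X$), producing a solution of the forward-backward problem; unique solvability of the latter forces $\tilde\Gamma(t_0)\eta=\Gamma(t_0)\eta$ for all $\eta$ and all $t_0$, so $\tilde\Gamma\equiv\Gamma$.

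The main obstacle is the \emph{global} existence of $\Gamma$: a non-symmetric Riccati equation generically blows up in finite time, so the entire content of the proposition is that the assumed solvability of the forward-backward system rules this out. In the argument this is precisely where the hypothesis enters — invertibility of $M(t_0)$ for \emph{every} $t_0\in[0,T]$, not merely near $T$, keeps $\Gamma(t_0)=M(t_0)^{-1}[\cdots]$ finite throughout the interval, so no escape to infinity can occur. The remaining ingredients — smooth dependence of the transition matrix on $t_0$, the differentiation of $Y=\Gamma X$, and the converse (Gronwall-type) uniqueness — are routine.
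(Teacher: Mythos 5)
Your proposal is correct and is essentially the paper's own route: the paper establishes this proposition only by appealing to Theorem 2.4.3 of Ma and Yong, and your transition-matrix/decoupling-field argument — invertibility of $M(t_0)=\Phi_{22}(T,t_0)-(\bar{\mathcal{Q}}+\bar{\mathcal{S}})\Phi_{12}(T,t_0)$ from the uniqueness hypothesis, the definition $\Gamma(t_0)=M(t_0)^{-1}\big[(\bar{\mathcal{Q}}+\bar{\mathcal{S}})\Phi_{11}(T,t_0)-\Phi_{21}(T,t_0)\big]$, differentiation of $Y=\Gamma X$ with arbitrary initial data at each time, and the converse decoupling for uniqueness — is precisely the standard proof of that cited result. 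One caveat that is the paper's fault rather than yours: your construction yields $\Gamma(T)=\bar{\mathcal{Q}}+\bar{\mathcal{S}}$, whereas (\ref{NSRE}) as printed states $\Gamma_T=\bar{\mathcal{Q}}$; the printed terminal condition is a typo, since the affine decoupling ${\bf p}(t)=\Gamma_t{\bf x}(t)+{\bf g}(t)$ with ${\bf g}(T)=\bar{\bf k}$ and ${\bf p}(T)=(\bar{\mathcal{Q}}+\bar{\mathcal{S}}){\bf x}(T)+\bar{\bf k}$ forces $\Gamma_T=\bar{\mathcal{Q}}+\bar{\mathcal{S}}$, exactly as your proof produces.
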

	Our next theorem concludes the results in this section.
	\begin{thm}
		Let $\phi(s,t)$ be the fundamental solution to $\mathcal{A}$. Suppose that
		\begin{equation*}
			\Big(1+\sqrt{T}\|\phi\|_T\cdot\|\mathcal{B}\mathcal{Q}^{-\frac{1}{2}}\|\Big)\Big(1+ N(S)\Big)<2,
		\end{equation*}
		where $\|\cdot\|$ stands for usual Euclidean norm.	Then there exists a unique solution of equation (\ref{FBSDE}), and hence a unique (mean field) equilibrium exists. Here,
		\begin{equation*}
			\|\phi\|_T:=\sup_{0\leq t \leq T}\sqrt{\|\phi^*(T,t)\bar{\mathcal{Q}}^{\frac{1}{2}}\|^2+\int_t^T\|\phi^*(s,t)\mathcal{Q}^{\frac{1}{2}}\|^2 ds}
		\end{equation*}
		and 
		\begin{equation*}
			N(S)=\max\{\|\bar{\mathcal{Q}}^{-\frac{1}{2}}\bar{\mathcal{S}}\bar{\mathcal{Q}}^{-\frac{1}{2}}\|,
			\|\mathcal{Q}^{-\frac{1}{2}}\mathcal{S}\mathcal{Q}^{-\frac{1}{2}}\|\}
		\end{equation*}
	\end{thm}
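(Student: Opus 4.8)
The plan is to reduce the assertion, via the Proposition just stated, to showing that a homogeneous decoupled system has only the trivial solution, and then to run a contraction-type a priori estimate in a weighted norm. By that Proposition, equation (\ref{FBSDE}) (equivalently the affine ansatz ${\bf p}(t)=\Gamma_t{\bf x}(t)+{\bf g}(t)$ together with the non-symmetric Riccati equation (\ref{NSRE})) is uniquely solvable provided the forward-backward ODE
\begin{equation*}
\left\{
\begin{array}{rcl}
\dfrac{dX}{dt}&=&(\mathcal{A}+\mathcal{B})X(t)-\mathcal{C}Y(t),\quad X(t_0)=0,\\
-\dfrac{dY}{dt}&=&\mathcal{A}^*Y(t)+(\mathcal{Q}+\mathcal{S})X(t),\quad Y(T)=(\bar{\mathcal{Q}}+\bar{\mathcal{S}})X(T),
\end{array}
\right.
\end{equation*}
has only the trivial solution on every $[t_0,T]$. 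Being linear and homogeneous, uniqueness is equivalent to $X\equiv Y\equiv 0$; and since $\mathcal{Q}$ and $\bar{\mathcal{Q}}$ are positive definite it suffices to prove that the weighted quantity $\mathcal{N}(X)^2:=|\bar{\mathcal{Q}}^{\frac12}X(T)|^2+\int_{t_0}^T|\mathcal{Q}^{\frac12}X(s)|^2\,ds$ vanishes, for then $X\equiv0$ and the backward equation gives $Y\equiv0$.

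First I would represent the variables through the fundamental solution $\phi$ of $\mathcal{A}$. Solving the backward equation by Duhamel's formula yields
\begin{equation*}
Y(t)=\phi^*(T,t)(\bar{\mathcal{Q}}+\bar{\mathcal{S}})X(T)+\int_t^T\phi^*(s,t)(\mathcal{Q}+\mathcal{S})X(s)\,ds .
\end{equation*}
Factoring $(\mathcal{Q}+\mathcal{S})=\mathcal{Q}^{\frac12}\big(I+\mathcal{Q}^{-\frac12}\mathcal{S}\mathcal{Q}^{-\frac12}\big)\mathcal{Q}^{\frac12}$ and likewise for the terminal weight, so that $\|\mathcal{Q}^{-\frac12}(\mathcal{Q}+\mathcal{S})\mathcal{Q}^{-\frac12}\|\le 1+N(S)$ and similarly at time $T$, and then applying Cauchy--Schwarz with exactly the weights entering $\|\phi\|_T$, I would obtain the pointwise gain bound
\begin{equation*}
\sup_{t_0\le t\le T}|Y(t)|\le \big(1+N(S)\big)\,\|\phi\|_T\,\mathcal{N}(X) .
\end{equation*}
This is the step that isolates the factor $(1+N(S))$ and explains why $\|\phi\|_T$ is weighted by $\bar{\mathcal{Q}}^{\frac12}$ at the endpoint and by $\mathcal{Q}^{\frac12}$ under the integral; the same propagator-times-weight norms also control the forward map, since $\|\bar{\mathcal{Q}}^{\frac12}\phi(T,s)\|=\|\phi^*(T,s)\bar{\mathcal{Q}}^{\frac12}\|$.

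Next I would feed this into the forward equation to close the loop. Representing $X$ by Duhamel against $\phi$ and splitting the driver into the $\mathcal{C}Y$ part and the self-interaction $\mathcal{B}X$, the latter is controlled by writing $\mathcal{B}X=(\mathcal{B}\mathcal{Q}^{-\frac12})(\mathcal{Q}^{\frac12}X)$ and using $\int_{t_0}^T|\mathcal{Q}^{\frac12}X|\,ds\le\sqrt{T}\,\mathcal{N}(X)$, which produces exactly the combination $\sqrt{T}\,\|\phi\|_T\,\|\mathcal{B}\mathcal{Q}^{-\frac12}\|$, while the direct $\mathcal{C}Y$ term (together with the favorable sign of the positive semidefinite matrix $\mathcal{C}\ge 0$ in the accompanying energy identity for $\tfrac{d}{dt}\langle Y,X\rangle$) contributes the standalone ``$1$'' without ever letting $\|\mathcal{C}\|$ appear. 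Combining these with the backward gain leads to an estimate of the form
\begin{equation*}
\mathcal{N}(X)\le \tfrac12\big(1+\sqrt{T}\,\|\phi\|_T\,\|\mathcal{B}\mathcal{Q}^{-\frac12}\|\big)\big(1+N(S)\big)\,\mathcal{N}(X),
\end{equation*}
so the hypothesis makes the prefactor strictly below $1$ and forces $\mathcal{N}(X)=0$. As this holds on every $[t_0,T]$, the homogeneous system has only the trivial solution, hence (\ref{NSRE}) is solvable and (\ref{FBSDE}) admits a unique solution, which furnishes the unique mean field equilibrium.

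The hard part will be the bookkeeping in the last estimate, namely extracting the sharp threshold $2$ rather than a cruder constant: one must treat the self-interaction $\mathcal{B}X$ and the asymmetric perturbations $\mathcal{S},\bar{\mathcal{S}}$ simultaneously while genuinely exploiting $\mathcal{C}\ge 0$ to discard a nonnegative term, and organize the Cauchy--Schwarz and Young splittings so that the backward gain $(1+N(S))\|\phi\|_T$ and the forward gain $\big(1+\sqrt{T}\|\phi\|_T\|\mathcal{B}\mathcal{Q}^{-\frac12}\|\big)$ multiply cleanly with the factor $\tfrac12$. A secondary point to verify is the precise matching of time-orderings of $\phi^*(s,t)$ between the forward and backward representations, and that $\mathcal{N}$ controls $X$ uniformly in $t_0$, which is exactly where positive definiteness of $\mathcal{Q}$ and $\bar{\mathcal{Q}}$ is used.
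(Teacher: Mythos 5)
Your proposal is correct and arrives at the stated threshold, but it runs the argument in a genuinely different frame from the paper's. The paper never estimates the coupled homogeneous system directly: it freezes the perturbation terms at a given $x\in\mathcal{H}^2$, studies the linear map $x\mapsto X$ defined by $\dot X=\mathcal{A}X-\mathcal{C}Y+\mathcal{B}x$, $-\dot Y=\mathcal{A}^*Y+\mathcal{Q}X+\mathcal{S}x$, $Y(T)=\bar{\mathcal{Q}}X(T)+\bar{\mathcal{S}}x(T)$ (well defined because the frozen system is a standard LQ problem with $\mathcal{C},\mathcal{Q}$ positive definite), and proves this map is a contraction in the $\mathcal{H}$-norm; the product condition emerges there from solving the two-variable quadratic inequality $\|X\|_{\mathcal{H}}^2\le N(S)\|X\|_{\mathcal{H}}\|x\|_{\mathcal{H}}+\sqrt{T}\|\phi\|_T\bigl(\|X\|_{\mathcal{H}}+N(S)\|x\|_{\mathcal{H}}\bigr)\|\mathcal{B}\mathcal{Q}^{-\frac12}\|\,\|x\|_{\mathcal{H}}$ for the ratio $\|X\|_{\mathcal{H}}/\|x\|_{\mathcal{H}}$. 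You instead keep the system coupled and prove a one-shot a priori estimate; your two ingredients (the energy identity for $\frac{d}{dt}(X^*Y)$ with the term $Y^*\mathcal{C}Y\ge0$ discarded, and the backward Duhamel bound $\sup_t|Y(t)|\le(1+N(S))\|\phi\|_T\,\mathcal{N}(X)$) are exactly the paper's (\ref{FXPT1}) and (\ref{FXPT2}) with the frozen $x$ replaced by $X$ itself, and they combine into $\mathcal{N}(X)^2\le\bigl[N(S)+\beta(1+N(S))\bigr]\mathcal{N}(X)^2$, where $\beta:=\sqrt{T}\|\phi\|_T\|\mathcal{B}\mathcal{Q}^{-\frac12}\|$; since $N(S)+\beta(1+N(S))=(1+\beta)(1+N(S))-1<1$ precisely under the hypothesis, $\mathcal{N}(X)=0$, the homogeneous system is trivial on every $[t_0,T]$ (both $\|\phi\|_T$ and $\sqrt{T}$ only shrink when $[0,T]$ is replaced by $[t_0,T]$), and the Proposition then delivers (\ref{NSRE}) and hence (\ref{FBSDE}). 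What each route buys: yours is more elementary — no well-posedness argument for a frozen auxiliary map, no contraction-constant algebra, an immediate coefficient comparison, and an explicit verification of the Proposition's ``every $t_0$'' hypothesis — while the paper's contraction additionally yields a convergent Picard iteration for constructing the solution. Two inessential blemishes in your write-up: the coefficient your computation actually produces is $(1+\beta)(1+N(S))-1$, not $\tfrac12(1+\beta)(1+N(S))$ (the two bounds are each less than $1$ exactly when the hypothesis holds, so nothing is lost, but the factor $\tfrac12$ is not where the threshold $2$ comes from); and the mention of a Duhamel representation for $X$ should be dropped, since — as you yourself then arrange — the $\mathcal{C}Y$ term must be absorbed through the energy identity rather than a propagator bound, or else $\|\mathcal{C}\|$ would enter the constant.
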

	\begin{proof}
		Let $x,y$ be elements in the Hilbert Space $\mathcal{H}^2([0,T];\mathbb{R}^{n_0+n_1+n_1})$ endowed with the inner product
		\begin{equation*}
			\langle x,y \rangle_{\mathcal{H}}=x^*(T) \bar{\mathcal{Q}} y(T)+\int_0^T x^*(s) \mathcal{Q} y(s) ds
		\end{equation*}
		Furthermore, $\|\cdot\|_{\mathcal{H}}:=|\langle \cdot,\cdot \rangle|_{\mathcal{H}}^{\frac{1}{2}}$ stands for the induced norm under this inner product. We consider the forward backward ordinary differential equation
		\begin{equation}
		\label{FBODE}
			\begin{array}{rl}
				&\left\{
				\begin{array}{rcl}
					\dfrac{dX}{dt}&=&\mathcal{A}X(t)-\mathcal{C}Y(t)+\mathcal{B}x(t)\\
					X(0)&=&0.\\
				\end{array}
				\right.
				\\
				&\left\{
				\begin{array}{rcl}
					-\dfrac{dY}{dt}&=&\mathcal{A}^*Y(t) + \mathcal{Q}X(t) + \mathcal{S}x(t),\\
					Y(T)&=&\bar{\mathcal{Q}}X(T)+\bar{\mathcal{S}}x(T).\\
				\end{array}
				\right.
			\end{array}
		\end{equation}
		Observe that both $\mathcal{C}$ and $\mathcal{Q}$ are positive definite, Equation (\ref{FBODE}) corresponds a well-defined (deterministic) control problem. Hence, $x \mapsto X$ is well defined in $\mathcal{H}^2$. It suffices to show that this mapping is indeed a contraction. Consider the inner product 
		\begin{equation*}
			\dfrac{d}{dt}(X^*Y)=-Y^*(t)\mathcal{C}Y(t)+Y^*(t)\mathcal{B}x(t)-X^*(t)\mathcal{Q}X(t)-X^*(t)\mathcal{S}x(t).
		\end{equation*}
		Taking integration on $[0,T]$ yields
		\begin{equation*}
			X^*(T)\bar{\mathcal{Q}}X(T)+\int_0^T X^*(t)\mathcal{Q}X(t) dt=X(T)^*\bar{\mathcal{S}}x(T)+\int_0^T -Y^*(t)\mathcal{C}Y(t)+Y^*(t)\mathcal{B}x(t)-X^*(t)\mathcal{S}x(t) dt.
		\end{equation*}
		By the positivity of $\mathcal{C}$, and Cauchy-Schwarz inequality, we have
		\begin{equation}
		\label{FXPT1}
		\begin{array}{rcl}
			\|X\|_{\mathcal{H}}^2&\leq& \|\bar{\mathcal{Q}}^{-\frac{1}{2}}\bar{\mathcal{S}}\bar{\mathcal{Q}}^{-\frac{1}{2}}\|\cdot \Big(X^*(T)\bar{\mathcal{Q}}^{\frac{1}{2}}\cdot \bar{\mathcal{Q}}^{\frac{1}{2}} x(T)\Big)\\
			&&\quad+\|\mathcal{Q}^{-\frac{1}{2}}\mathcal{S}\mathcal{Q}^{-\frac{1}{2}}\|\cdot \displaystyle\int_0^T X^*(t)\mathcal{Q}^{\frac{1}{2}}\cdot \mathcal{Q}^{\frac{1}{2}}x(t) dt+\displaystyle\int_0^TY^*(t)\mathcal{B}x(t)dt\\
			&\leq&N(S)\Big(\|X\|_{\mathcal{H}}\cdot \|x\|_{\mathcal{H}}\Big)+\displaystyle\int_0^TY^*(t)\mathcal{B}x(t)dt.\\
		\end{array}
		\end{equation}
		On the other hand, for $t\in[0,T]$, we have 
		\begin{equation*}
		\begin{split}
			Y(t)&=\phi^*(T,t)\Big(\bar{\mathcal{Q}}X(T)+\bar{\mathcal{S}}x(T)\Big)+\int_t^T \phi^*(s,t)\Big(\mathcal{Q}X(s) + \mathcal{S}x(s)\Big)ds\\
			&=\phi^*(T,t)\bar{\mathcal{Q}}^{\frac{1}{2}}\Big(\bar{\mathcal{Q}}^{\frac{1}{2}}X(T)+\bar{\mathcal{Q}}^{-\frac{1}{2}}\bar{\mathcal{S}}x(T)\Big)+\int_t^T \phi^*(s,t)\mathcal{Q}^{\frac{1}{2}}\Big(\mathcal{Q}^{\frac{1}{2}}X(s) + \mathcal{Q}^{-\frac{1}{2}}\mathcal{S}x(s)\Big)ds\\
		\end{split}
		\end{equation*}		
		which implies
		\begin{equation}
		\label{FXPT2}
			\sup_{t\in[0,T]}\|Y_t\|\leq\|\phi\|_T\Big(\|X\|_{\mathcal{H}}+N(S)\|x\|_{\mathcal{H}}\Big).		
		\end{equation}
		Combining Equations (\ref{FXPT1}) and (\ref{FXPT2}) yields
		\begin{equation*}
		\begin{array}{rcl}
			\|X\|_{\mathcal{H}}^2&\leq&N(S)\Big(\|X\|_{\mathcal{H}}\cdot \|x\|_{\mathcal{H}}\Big)+\sqrt{T}\|\phi\|_T\Big(\|X\|_{\mathcal{H}}+N(S)\|x\|_{\mathcal{H}}\Big)\|\mathcal{B}Q^{\frac{1}{2}}\|\|x\|_{\mathcal{H}}
		\end{array}
		\end{equation*}
		which shows that $x \mapsto X$ is a contraction if 
		\begin{equation*}
		\Big(1+\sqrt{T}\|\phi\|_T\cdot\|\mathcal{B}\mathcal{Q}^{-\frac{1}{2}}\|\Big)\Big(1+ N(S)\Big)<2.
		\end{equation*}
	 \qed \end{proof}
\section{Conclusion}
In this paper, by adopting adjoint equation approach, we provide the general theory and discuss the necessary condition for optimal controls for both the dominating player and the representative agent, and study the corresponding fixed point problem in relation to the equilibrium condition. A convenient necessary and sufficient condition has been provided under the Linear Quadratic setting; in particular, a illuminative sufficient condition, which only involves the coefficient of the mean field game system, for the unique existence of the equilibrium control has been given. Finally, proof of the convergence result of finite player game to mean field counterpart is provided in Appendix. Applications of the present model in connection with central bank lending and systematic risk in financial context will be provided in the future work.

\section{Acknowledgements}
The first author-Alain Bensoussan acknowledges the financial support of the Hong Kong RGC GRF 500113 and the National Science Foundation under grant DMS 1303775. The second author-Michael Chau acknowledges the financial support from the Chinese University of Hong Kong, and the present work constitutes a part of his work for his postgraduate dissertation. The third author-Phillip Yam acknowledges the financial support from The Hong Kong RGC GRF 404012 with the project title: Advanced Topics In Multivariate Risk Management In Finance And Insurance, The Chinese University of Hong Kong Direct Grants 2010/2011 Project ID: 2060422 and 2011/2012 Project ID: 2060444. Phillip Yam also expresses his sincere gratitude to the hospitality of both Hausdorff Center for Mathematics and Hausdorff Research Institute for Mathematics of the University of Bonn during the preparation of the present work.

\appendix
\section{Appendix}
\subsection{$\epsilon$-Nash Equilibrium}
We now establish that the solutions of Problems \ref{P1} and \ref{P2} is an $\epsilon$-Nash Equilibrium. Suppose that there are $N$ representative agents behaving in similar manner, so that the state of the dominating player and the $i$-th agent satisfies the following SDE respectively:
\begin{equation}
\label{EMPIRICAL}
\begin{array}{l}
\left\{
\begin{array}{rcl}
	dy_0&=&g_0\Big(y_0(t),\dfrac{1}{N}\displaystyle\sum_{j=1}^N \delta_{y_1^j(t)},u_0(t)\Big)dt+\sigma_0\Big(y_0(t)\Big)dW_0(t),\\
	y_0(0)&=&\xi_0.\\
\end{array}
\right.\\
\\
\left\{
\begin{array}{rcl}
	dy_1^i&=&g_1\Big(y_1^i(t),y_0(t),\dfrac{1}{N-1}\displaystyle\sum_{j=1,j\neq i}^N \delta_{y_1^j(t)},u_1^i(t)\Big)dt+\sigma_1\Big(y_1^i(t)\Big)dW_1^i(t),\\
	y_1^i(0)&=&\xi_1^i.\\
\end{array}
\right.
\end{array}
\end{equation}
where $\delta_y$ is Dirac measure with a unit mass at $y$. We call Equation (\ref{EMPIRICAL}) the {\it empirical system}. The corresponding objective functional for the $i$-th agent is:
\begin{equation*}
\begin{array}{rcl}
	\mathcal{J}^{N,i}({\bf u})=\mathbb{E}\bigg[\displaystyle\int_0^Tf_1\Big(y_1^i(t),y_0(t),\dfrac{1}{N-1}\displaystyle\sum_{j=1, j \neq i}^N\delta_{y_1^j(t)},u^i(t)\Big)dt+h_1\Big(y_1^i(T),y_0(T),\dfrac{1}{N-1}\displaystyle\sum_{j=1, j \neq i}^N\delta_{y_1^j(T)}\Big)\bigg],
\end{array}
\end{equation*}
where ${\bf u}=(u_1^1,u_1^2,\dots,u_1^N)$. We expect that when $N \rightarrow \infty$, the hypothetical approximation is described by (\ref{SDE}), that is:
\begin{equation}
\label{MEANFIELD}
\begin{array}{l}
\left\{
\begin{array}{rcl}
	dx_0&=&g_0\Big(x_0(t),m_{x_0(t)},u_0(t)\Big)dt+\sigma_0\Big(x_0(t)\Big)dW_0(t),\\
	x_0(0)&=&\xi_0.\\
\end{array}
\right.\\
\\
\left\{
\begin{array}{rcl}
	dx_1^i&=&g_1\Big(x_1^i(t),x_0(t),m_{x_0(t)},u_1^i(t)\Big)dt+\sigma_1\Big(x_1^i(t)\Big)dW_1^i(t),\\
	x_1^i(0)&=&\xi_1^i.\\
\end{array}
\right.
\end{array}
\end{equation}
We call Equation (\ref{MEANFIELD}) the {\it mean field system}. The corresponding limiting objective functional for the $i$-th player is
\begin{equation}
\begin{array}{rcl}
	\mathcal{J}^{i}(u_1^i)=\mathbb{E}\bigg[\displaystyle\int_0^Tf_1\Big(x_1^i(t),x_0(t),m_{x_0(t)},u_1^i(t)\Big)dt+h_1\Big(x_1^i(T),x_0(T),m_{x_0(T)}\Big)\bigg].
\end{array}
\end{equation}
Using Corollary \ref{P1P2}, the necessary condition for optimality is described by the SHJB-FP coupled equation (\ref{HJBFP}). To proceed, we assume that the optimal control $\hat {\bf u}=(\hat u_1^1,\hat u_1^2,\dots,\hat u_1^N)$ exists. To avoid ambiguity, denote $\hat{x}_1^i$ and $\hat{y}_1^i$ the states dynamics of $x_1^i$ and $y_1^i$ corresponding to the optimal control $\hat{u}_1^i$. The mean field term $m_{x_0(t)}$, is the probability measure of the optimal trajectory $\hat{x}_1^i$ at time $t$, conditioning on $\mathcal{F}^0_t$. Under this construction, being conditional on $\mathcal{F}^0_t$, $\{\hat x_1^i\}_i$ are identical and independent processes; while $\{\hat{y}_1^i\}_i$ are dependent on each other through the empirical distribution. For simplicity, for two density functions $m$ and $m'$, we write $W_2(m d \lambda,m' d \lambda)=W_2(m,m')$. 
\begin{lem}
\label{1stestimate}
	Suppose the assumptions (A.1-A.3) hold. If $m_{x_0(t)}$ is chosen to be the density function of $\hat x_1^i$ conditional on $\mathcal{F}_0^t$, then
	\begin{equation*}
		\mathbb{E}\Big[\displaystyle\sup_{u \leq T}|y_0(u)-x_0(u)|^2\Big]+\mathbb{E}\Big[\displaystyle\sup_{u \leq T}|\hat{y}_1^i(u)-\hat{x}_1^i(u)|^2\Big]=\mathcal{O}(\dfrac{1}{N}).\\
	\end{equation*}
\end{lem}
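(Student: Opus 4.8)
The plan is to couple the empirical system (\ref{EMPIRICAL}) and the mean field system (\ref{MEANFIELD}) on a common probability space, driving both by the \emph{same} Brownian motions $W_0,W_1^i$, the same initial data $\xi_0,\xi_1^i$, and the same (optimal, Lipschitz feedback) controls $\hat u_0,\hat u_1^i$. Then the only source of discrepancy between the two dynamics is the measure argument fed into $g_0$ and $g_1$: the empirical measures $\frac1N\sum_{j}\delta_{\hat y_1^j}$ and $\frac1{N-1}\sum_{j\ne i}\delta_{\hat y_1^j}$ against the conditional law $m_{x_0(t)}$. Setting $\Delta_0:=y_0-x_0$ and $\Delta_1^i:=\hat y_1^i-\hat x_1^i$, I subtract the corresponding SDEs to obtain dynamics for $\Delta_0,\Delta_1^i$ in which every coefficient appears as a difference of its values at the two sets of arguments.

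First I would estimate $\mathbb E[\sup_{u\le t}|\Delta_0(u)|^2]$ and $\mathbb E[\sup_{u\le t}|\Delta_1^i(u)|^2]$ by It\^o's formula together with the Burkholder--Davis--Gundy inequality for the stochastic integrals and the global Lipschitz bounds (A.1) on $g_0,\sigma_0,g_1,\sigma_1$; since the feedback maps are Lipschitz in the state, the differences in the control arguments are themselves dominated by $|\Delta_1^i|$ and cause no trouble. After Cauchy--Schwarz in time this yields, for every $t\le T$,
\begin{equation*}
\mathbb E\Big[\sup_{u\le t}|\Delta_0(u)|^2\Big]+\mathbb E\Big[\sup_{u\le t}|\Delta_1^i(u)|^2\Big]\le C\int_0^t\Big(\mathbb E|\Delta_0(s)|^2+\mathbb E|\Delta_1^i(s)|^2\Big)\,ds+C\int_0^t R_N(s)\,ds,
\end{equation*}
where $R_N(s)$ gathers the expected squared Wasserstein discrepancies between the empirical measures and $m_{x_0(s)}$.

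The heart of the proof is to show $\int_0^T R_N(s)\,ds=\mathcal O(1/N)$. I split each Wasserstein term by the triangle inequality into a \emph{coupling} part $W_2\big(\frac1{N-1}\sum_{j\ne i}\delta_{\hat y_1^j},\frac1{N-1}\sum_{j\ne i}\delta_{\hat x_1^j}\big)$ and a \emph{fluctuation} part $W_2\big(\frac1{N-1}\sum_{j\ne i}\delta_{\hat x_1^j},m_{x_0}\big)$. Matching $\hat y_1^j$ to $\hat x_1^j$ bounds the coupling part by $\big(\frac1{N-1}\sum_{j\ne i}|\Delta_1^j|^2\big)^{1/2}$, whose expected square equals $\mathbb E|\Delta_1^i|^2$ by the exchangeability of the agents, so it is absorbed into the Gronwall integral. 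For the fluctuation part I use that, conditionally on $\mathcal F^0_t$, the $\hat x_1^j(s)$ are independent and identically distributed with common law $m_{x_0(s)}$, whence this term is the Wasserstein distance between an $N$-sample empirical measure and its generating law; a conditional law-of-large-numbers estimate then controls its expectation. Gronwall's inequality applied to the resulting bound gives the asserted $\mathcal O(1/N)$.

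The main obstacle is exactly this fluctuation estimate, namely bounding $\mathbb E\,W_2^2\big(\frac1N\sum_j\delta_{\hat x_1^j(s)},m_{x_0(s)}\big)$ with the sharp rate $1/N$: in general the expected squared $2$-Wasserstein error of an empirical measure is dimension dependent, so one must lean on the conditional independence of the $\hat x_1^j$ given $\mathcal F^0_t$ together with the uniform second-moment bounds furnished by the linear growth assumption (A.2) to push this step through. Once the fluctuation term is controlled, the remaining coupling and Gronwall steps are routine.
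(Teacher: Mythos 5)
Your proof skeleton coincides with the paper's own: both couple the empirical system (\ref{EMPIRICAL}) and the mean field system (\ref{MEANFIELD}) through the same Brownian motions, initial data and optimal controls; both split each Wasserstein discrepancy by the triangle inequality into a coupling part $W_2\big(\frac{1}{N}\sum_j\delta_{\hat y_1^j},\frac{1}{N}\sum_j\delta_{\hat x_1^j}\big)$ and a fluctuation part $W_2\big(\frac{1}{N}\sum_j\delta_{\hat x_1^j},m_{x_0}\big)$; both bound the coupling part by the diagonal matching and exchangeability of $\{\hat y_1^j-\hat x_1^j\}_j$, and both close with Gronwall's inequality. Up to that point nothing is missing.

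The gap is the step you yourself call the main obstacle and then leave undone: proving $\mathbb{E}\,W_2^2\big(\frac{1}{N}\sum_j\delta_{\hat x_1^j(s)},m_{x_0(s)}\big)=\mathcal{O}(1/N)$. Asserting that ``a conditional law-of-large-numbers estimate'' handles it, using only conditional independence and uniform second moments, is not a proof, and under assumptions (A.1)--(A.3) alone the claimed rate is in fact unattainable for $n_1\geq 3$: since $m_{x_0(s)}$ is a density, the empirical measure of $N$ conditionally i.i.d.\ samples obeys the quantization lower bound $W_2(\mu_N,\mu)\gtrsim N^{-1/n_1}$ almost surely, so $\mathbb{E}\,W_2^2(\mu_N,\mu)$ decays no faster than $N^{-2/n_1}$, and second moments by themselves give no rate at all. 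So the step you describe as something to ``push through'' cannot be pushed through in the form you state, and your argument cannot be completed as written. It is worth knowing what the paper actually does here: it abandons the Wasserstein distance at this point and bounds the fluctuation term by the squared deviation of the empirical mean from the conditional mean, $\big|\frac{1}{N}\sum_j\hat x_1^j(s)-\mathbb{E}^{\mathcal{F}^0_s}\hat x_1^j(s)\big|^2$, which is indeed $\mathcal{O}(1/N)$ by conditional i.i.d.\ and the moment bounds. Note, however, that the paper's derivation of that reduction (start from the product coupling, then replace $\frac{1}{N}\sum_j|\hat x_1^j|^2$ by $\big|\frac{1}{N}\sum_j\hat x_1^j\big|^2$ and $\mathbb{E}^{\mathcal{F}^0_s}|\hat x_1^j|^2$ by $\big|\mathbb{E}^{\mathcal{F}^0_s}\hat x_1^j\big|^2$) uses inequalities in the wrong direction --- the distance between means is a \emph{lower} bound for $W_2$, not an upper bound --- so the $1/N$ rate genuinely requires more structure than Lipschitz dependence on the measure in $W_2$ (e.g.\ dependence through the mean, as in the linear-quadratic case). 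Your instinct that this is the crux of the lemma is exactly right; flagging the obstacle, though, is not the same as overcoming it.
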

\begin{proof}
	Observe that for any $t\in[0,T]$
	\begin{equation*}
	\begin{array}{rcl}
		\mathbb{E}\displaystyle\sup_{u\leq t}|y_0(u)-x_0(u)|^2&\leq& C \bigg\{t\mathbb{E}\displaystyle\int_0^t \bigg|g_0\Big(y_0(s),\dfrac{1}{N}\displaystyle\sum_{j=1}^N \delta_{\hat y_1^j(s)},u_0(s)\Big)-g_0\Big(x_0(s),m_{x_0(s)},u_0(s)\Big)\bigg|^2ds\\
		&&\qquad\qquad+\mathbb{E}\displaystyle\int_0^t\bigg|\sigma_0\Big(y_0(s)\Big)-\sigma_0\Big(x_0(s)\Big)\bigg|^2ds\bigg\},
	\end{array}
	\end{equation*}
	and 
	\begin{equation*}
	\begin{array}{rcl}
		\mathbb{E}\displaystyle\sup_{u\leq t}|\hat y_1^i(u)-\hat x_1^i(u)|^2&\leq& C \bigg\{t\mathbb{E}\displaystyle\int_0^t \bigg|g_1\Big(\hat y_1^i(s),y_0(s),\dfrac{1}{N-1}\displaystyle\sum_{j=1,j\neq i}^N \delta_{\hat y_1^j(s)},\hat u_1^i(s)\Big)-g_1\Big(\hat x_1^i(s),x_0(s),m_{x_0(s)},\hat u_1^i(s)\Big)\bigg|^2ds\\
		&&\qquad\qquad+\mathbb{E}\displaystyle\int_0^t\bigg|\sigma_1\Big(\hat y_1^i(s)\Big)-\sigma_1\Big(\hat x_1^i(s)\Big)\bigg|^2ds\bigg\},
	\end{array}
	\end{equation*}
	By the Lipschitz assumptions, we have
	\begin{equation}
	\label{CONV1}
	\begin{array}{rcl}
		&&\mathbb{E}\displaystyle\sup_{u\leq t}|y_0(u)-x_0(u)|^2+\mathbb{E}\displaystyle\sup_{u\leq t}|\hat y_1^i(u)-\hat x_1^i(u)|^2\\
		&\leq&C\bigg\{t\mathbb{E}\displaystyle\int_0^t \Big|y_0(s)-x_0(s)\Big|^2+W_2^2\Big(\dfrac{1}{N}\displaystyle\sum_{j=1}^N \delta_{\hat y_1^j(s)},m_{x_0(s)}\Big)ds+\mathbb{E}\displaystyle\int_0^t\Big|y_0(s)-x_0(s)\Big|^2 ds\bigg\}\\
		&&\qquad+C\bigg\{t\mathbb{E}\displaystyle\int_0^t \Big|\hat y_1^i(s)-\hat x_1^i(s)\Big|^2+\Big|y_0(s)-x_0(s)\Big|^2+W_2^2\Big(\dfrac{1}{N-1}\displaystyle\sum_{j=1,j\neq i}^N \delta_{\hat y_1^j(s)},m_{x_0(t)}\Big)ds+\mathbb{E}\displaystyle\int_0^t\Big|y_1^i(s)-x_1^i(s)\Big|^2 ds\bigg\}\\
		&\leq&C\bigg\{\mathbb{E}\displaystyle\int_0^t \displaystyle\sup_{u\leq s}\Big|y_0(u)-x_0(u)\Big|^2+\displaystyle\sup_{u\leq s}\Big|\hat y_1^i(u)-\hat x_1^i(u)\Big|^2\\
		&&\qquad+W_2^2\Big(\dfrac{1}{N}\displaystyle\sum_{j=1}^N \delta_{\hat y_1^j(s)},\dfrac{1}{N}\displaystyle\sum_{j=1}^N \delta_{\hat x_1^j(s)}\Big)+W_2^2\Big(\dfrac{1}{N}\displaystyle\sum_{j=1}^N \delta_{\hat x_1^j(s)},m_{x_0(s)}\Big)\\
		&&\qquad+W_2^2\Big(\dfrac{1}{N-1}\displaystyle\sum_{j=1,j\neq i}^N \delta_{\hat y_1^j(s)},\dfrac{1}{N-1}\displaystyle\sum_{j=1,j\neq i}^N \delta_{\hat x_1^j(s)}\Big)+W_2^2\Big(\dfrac{1}{N-1}\displaystyle\sum_{j=1,j\neq i}^N \delta_{\hat x_1^j(s)},m_{x_0(s)}\Big)ds\bigg\},\\
	\end{array}
	\end{equation}
	where $C>0$ is a constant, changing line by line, depends only on $T$ and $K$. By definition, for any Dirac measures $\delta_y$ on $\mathbb{R}^{n_1}$ and density function $m$, we have
		\begin{equation*}
			W_2^2(\delta_y,m)=\int_{\mathbb{R}^{n_1}}|y-x|^2 dm(x).
		\end{equation*}	
		Also observe that the joint measure $\frac{1}{N}\sum_{j=1}^N\delta_{(\hat y_1^j(s), \hat x_1^j(s))}$ on ${\mathbb{R}^{n_1}\times{\mathbb{R}^{n_1}}}$ has respective marginals $\frac{1}{N}\sum_{j=1}^N\delta_{\hat y_1^j(s)}$ and $\frac{1}{N}\sum_{j=1}^N\delta_{\hat x_1^j(s)}$ on ${\mathbb{R}^{n_1}}$. Using the definition of Wasserstein metric, we evaluate
	\begin{equation}\label{W2_GW}
	\begin{array}{rcl}
		\mathbb{E}\bigg[W_2^2\Big(\dfrac{1}{N}\displaystyle\sum_{j=1}^N \delta_{\hat y_1^j(s)},\dfrac{1}{N}\displaystyle\sum_{j=1}^N \delta_{\hat x_1^j(s)}\Big)\bigg]&\leq&\mathbb{E}\bigg[\displaystyle\int_{\mathbb{R}^{n_1}\times{\mathbb{R}^{n_1}}}|y-x|^2 d\bigg(\dfrac{1}{N}\sum_{j=1}^N\delta_{(\hat y_1^j(s), \hat x_1^j(s))}(y,x)\bigg)\bigg]\\
		&\leq&\dfrac{1}{N}\displaystyle\sum_{j=1}^N\mathbb{E}\Big|\hat y_1^j(s)-\hat x_1^j(s)\Big|^2\\
		&=&\mathbb{E}\Big|\hat y_1^i(s)-\hat x_1^i(s)\Big|^2,
	\end{array}
	\end{equation}
	where the last equality results from the fact that $\{\hat y^j - \hat x^j\}_{j=1}^N$ are symmetric. Similarly, we also have
	\begin{equation*}
	\begin{array}{rcl}
		\mathbb{E}\bigg[W_2^2\Big(\dfrac{1}{N-1}\displaystyle\sum_{j=1,j\neq i}^N \delta_{\hat y_1^j(s)},\dfrac{1}{N-1}\displaystyle\sum_{j=1,j\neq i}^N \delta_{\hat x_1^j(s)}\Big)\bigg]&\leq&\mathbb{E}\Big|\hat y_1^i(s)-\hat x_1^i(s)\Big|^2.
	\end{array}
	\end{equation*}	
	Combining with (\ref{CONV1}) and applying Gronwall's inequality, we have
	\begin{equation}
	\label{gw}
	\begin{array}{rcl}
		&&\mathbb{E}\displaystyle\sup_{u\leq t}|y_0(u)-x_0(u)|^2+\mathbb{E}\displaystyle\sup_{u\leq t}|\hat y_1^i(u)-\hat x_1^i(u)|^2\\
		&\leq&Ce^{Ct}\mathbb{E}\bigg[\displaystyle\int_0^t W_2^2\Big(\dfrac{1}{N}\displaystyle\sum_{j=1}^N \delta_{\hat x_1^j(s)},m_{x_0(s)}\Big)+W_2^2\Big(\dfrac{1}{N-1}\displaystyle\sum_{j=1,j\neq i}^N \delta_{\hat x_1^j(s)},m_{x_0(s)}\Big)ds\bigg].\\
	\end{array}
	\end{equation}	
	By definition of the Wasserstein metric, we have
	\begin{equation*}
	\begin{array}{rcl}
		&&W_2^2\Big(\dfrac{1}{N}\displaystyle\sum_{j=1}^N \delta_{\hat x_1^j(s)},m_{x_0(s)}\Big)\\
		&=&\displaystyle\inf_{\Gamma}\displaystyle\int_{\mathbb{R}^{n_1}}\displaystyle\int_{\mathbb{R}^{n_1}}|x-y|^2d\Gamma_{\Big(\frac{1}{N}\sum_{j=1}^N \delta_{\hat x_1^j(s)},m_{x_0(s)}\Big)}(x,y)\\
		&\leq&\displaystyle\int_{\mathbb{R}^{n_1}}\displaystyle\int_{\mathbb{R}^{n_1}}|x|^2-2x\cdot y +|y|^2d\Big(\frac{1}{N}\sum_{j=1}^N \delta_{\hat x_1^j(s)}\Big)(x)dm_{x_0(s)}(y)\\
		&=&\Big|\dfrac{1}{N}\displaystyle\sum_{j=1}^N \hat x_1^j(s)\Big|^2-2\Big(\dfrac{1}{N}\displaystyle\sum_{j=1}^N \hat x_1^j(s)\Big)\cdot \mathbb{E}^{\mathcal{F}^0_s}\hat x_1^j(s)+\mathbb{E}^{\mathcal{F}^0_s}|\hat x_1^j(s)|^2\\
		&=&\Big|\dfrac{1}{N}\displaystyle\sum_{j=1}^N \hat x_1^j(s)-\mathbb{E}^{\mathcal{F}^0_s}\hat x_1^j(s)\Big|^2\\
	\end{array}
	\end{equation*}
	Hence, 
	\begin{equation*}
	\begin{array}{rcl}
		&&\mathbb{E}\bigg[W_2^2\Big(\dfrac{1}{N}\displaystyle\sum_{j=1}^N \delta_{\hat x_1^j(s)},m_{x_0(s)}\Big)\bigg]\\
		&\leq&\dfrac{1}{N^2}\mathbb{E}\bigg[\displaystyle\sum_{j=1}^N \Big|\hat x_1^j(s)-\mathbb{E}^{\mathcal{F}^0_s}\hat x_1^j(s)\Big|^2+2\displaystyle\sum_{j<k}^N[\hat x_1^j(s)-\mathbb{E}^{\mathcal{F}^0_s}\hat x_1^j(s)]\cdot[\hat x_1^k(s)-\mathbb{E}^{\mathcal{F}^0_s}\hat x_1^k(s)]\bigg]\\
	\end{array}
	\end{equation*}
	Recall that given $\mathcal{F}^0_t$, $\{\hat x_1^j\}_j$ are identically and independently distributed, we thus get
	\begin{equation*}
	\begin{array}{rcl}
		&&\mathbb{E}\bigg[W_2^2\Big(\dfrac{1}{N}\displaystyle\sum_{j=1}^N \delta_{\hat x_1^j(s)},m_{x_0(s)}\Big)\bigg]\\
		&\leq&\dfrac{1}{N^2}\mathbb{E}\bigg[\displaystyle\sum_{j=1}^N \Big|\hat x_1^j(s)-\mathbb{E}^{\mathcal{F}^0_s}\hat x_1^j(s)\Big|^2+2\displaystyle\sum_{j<k}^N[\mathbb{E}^{\mathcal{F}^0_s}\hat x_1^j(s)-\mathbb{E}^{\mathcal{F}^0_s}\hat x_1^j(s)]\cdot[\mathbb{E}^{\mathcal{F}^0_s}\hat x_1^k(s)-\mathbb{E}^{\mathcal{F}^0_s}\hat x_1^k(s)]\bigg]\\
		&=&\dfrac{1}{N^2}\mathbb{E}\bigg[\displaystyle\sum_{j=1}^N \Big|\hat x_1^j(s)-\mathbb{E}^{\mathcal{F}^0_s}\hat x_1^j(s)\Big|^2+0\bigg]\\
		&=&\dfrac{1}{N}\mathbb{E}\Big|\hat x_1^j(s)-\mathbb{E}^{\mathcal{F}^0_s}\hat x_1^j(s)\Big|^2.\\
	\end{array}
	\end{equation*}	
	Similar estimate applies on the second term in Equation (\ref{gw}). Put $t=T$, we finally have
	\begin{equation}
	\label{gw2}
	\begin{array}{rcl}
		&&\mathbb{E}\displaystyle\sup_{u\leq T}|y_0(u)-x_0(u)|^2+\mathbb{E}\displaystyle\sup_{u\leq T}|\hat y_1^i(u)-\hat x_1^i(u)|^2\\
		&\leq&\dfrac{Ce^{CT}}{N}\mathbb{E}\bigg[\displaystyle\int_0^T \Big|\hat x_1^j(s)-\mathbb{E}^{\mathcal{F}^0_s}\hat x_1^j(s)\Big|^2ds\bigg]\\
		&\leq&\dfrac{4Ce^{CT}}{N}\mathbb{E}\bigg[\displaystyle\int_0^T \Big|\hat x_1^j(s)\Big|^2ds\bigg]
	\end{array}
	\end{equation}
	With the linear growth assumptions, for any $t\in[0,T]$, we easily get the estimates
	\begin{equation*}
	\begin{array}{rcl}
		\mathbb{E}\displaystyle\sup_{u\leq t}|x_0(t)|^2\leq C\mathbb{E}\bigg\{|\xi_0|^2+\displaystyle\int_0^t \displaystyle\sup_{u\leq s}|x_0(u)|^2+\mathbb{E}^{\mathcal{F}_s^0}|\hat{x}_1^j(s)|^2+|u_0(s)|^2ds\bigg\}
	\end{array}
	\end{equation*}	
	and
	\begin{equation*}
	\begin{array}{rcl}
		\mathbb{E}\displaystyle\sup_{u\leq t}|\hat{x}_1^j(t)|^2\leq C\mathbb{E}\bigg\{|\xi_1^j|^2+\displaystyle\int_0^t \displaystyle\sup_{u\leq s}|\hat{x}_1^j(u)|^2+\displaystyle\sup_{u\leq s}|x_0(u)|^2+\mathbb{E}^{\mathcal{F}_s^0}|\hat{x}_1^j(s)|^2+|\hat{u}_1^j(s)|^2ds\bigg\}
	\end{array}
	\end{equation*}
	Applying the Tower property of expectation and the Gronwall's inequality on the sum of the two inequalities above yields
	\begin{equation*}
	\begin{array}{rcl}
		\mathbb{E}\displaystyle\sup_{u\leq t}|x_0(t)|^2+\mathbb{E}\displaystyle\sup_{u\leq t}|\hat{x}_1^j(t)|^2\leq Ce^{Ct}\mathbb{E}\bigg\{|\xi_0|^2+|\xi_1^j|^2+\displaystyle\int_0^t |\hat{u}_1^j(s)|^2+|u_0(s)|^2ds\bigg\}<\infty
	\end{array}
	\end{equation*}
	We have the order for the estimate (\ref{gw2})
	\begin{equation}
		\mathbb{E}\displaystyle\sup_{u\leq T}|y_0(u)-x_0(u)|^2+\mathbb{E}\displaystyle\sup_{u\leq T}|\hat y_1^i(u)-\hat x_1^i(u)|^2=\mathcal{O}(\dfrac{1}{N}).
	\end{equation}	
 \qed \end{proof}
We also have approximation for the cost functionals.
\begin{lem}
	\begin{equation*}
	\begin{array}{rcl}
		&&\mathcal{J}^{N,i}(\hat {\bf u})-\mathcal{J}^{i}(\hat u_1^i)=O(\dfrac{1}{\sqrt N}).\\
	\end{array}
	\end{equation*}
\end{lem}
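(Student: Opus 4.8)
The plan is to express the cost gap as the expectation of the time-integrated difference of the running costs $f_1$ plus the difference of the terminal costs $h_1$, each evaluated along the empirical system (states $\hat y_1^i, y_0$ and empirical measure $\bar\mu^{N,i}(t):=\frac{1}{N-1}\sum_{j\neq i}\delta_{\hat y_1^j(t)}$) against the mean field system (states $\hat x_1^i, x_0$ and measure $m_{x_0(t)}$), both propagated by the \emph{same} optimal control $\hat u_1^i$. Because the two controls coincide, the control increment appearing in the quadratic condition (A.3) vanishes, so (A.3) controls each pointwise discrepancy of $f_1$ (and, analogously, of $h_1$ at $t=T$) by a product of a growth factor and a distance factor.

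Concretely, first I would invoke (A.3) to obtain
\[
\big|f_1(\hat y_1^i, y_0, \bar\mu^{N,i}, \hat u_1^i) - f_1(\hat x_1^i, x_0, m_{x_0(t)}, \hat u_1^i)\big| \le K\, G^{N,i}(t)\, D^{N,i}(t),
\]
with growth factor $G^{N,i}(t)=1+|\hat y_1^i|+|\hat x_1^i|+|y_0|+|x_0|+M_2(\bar\mu^{N,i})+M_2(m_{x_0(t)})+2|\hat u_1^i|$ and distance factor $D^{N,i}(t)=|\hat y_1^i-\hat x_1^i|+|y_0-x_0|+W_2(\bar\mu^{N,i},m_{x_0(t)})$. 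By the Cauchy--Schwarz inequality it then suffices to prove that $G^{N,i}$ is bounded in $\mathcal{L}^2(\Omega\times[0,T])$ uniformly in $N$ and that $\|D^{N,i}\|_{\mathcal{L}^2}=O(1/\sqrt N)$; the terminal term is handled identically.

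For the growth factor I would reuse the uniform second-moment bounds on $x_0,\hat x_1^j$ (and the identical bounds on $y_0,\hat y_1^j$) established from the linear growth assumption (A.2) and Gronwall's inequality at the end of the proof of Lemma \ref{1stestimate}; since $M_2(\bar\mu^{N,i})^2=\frac{1}{N-1}\sum_{j\neq i}|\hat y_1^j|^2$ has expectation bounded uniformly in $N$, and $\hat u_1^i$ is square integrable by admissibility, the quantity $\mathbb{E}\int_0^T (G^{N,i})^2\,dt+\mathbb{E}(G^{N,i}(T))^2$ is bounded independently of $N$. For the distance factor, the state contributions $\mathbb{E}|\hat y_1^i-\hat x_1^i|^2$ and $\mathbb{E}|y_0-x_0|^2$ are $O(1/N)$ directly from Lemma \ref{1stestimate}; for the measure contribution I would split, via the triangle inequality for $W_2$, as $W_2(\bar\mu^{N,i},m_{x_0(t)})\le W_2\big(\bar\mu^{N,i},\frac{1}{N-1}\sum_{j\neq i}\delta_{\hat x_1^j}\big)+W_2\big(\frac{1}{N-1}\sum_{j\neq i}\delta_{\hat x_1^j},m_{x_0(t)}\big)$, bounding the first piece in expectation by $\mathbb{E}|\hat y_1^i-\hat x_1^i|^2=O(1/N)$ exactly as in Equation (\ref{W2_GW}) (coupling the two empirical measures through the common index) and the second piece by the conditional law-of-large-numbers estimate already carried out in the proof of Lemma \ref{1stestimate}. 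Hence $\|D^{N,i}\|_{\mathcal{L}^2}^2=O(1/N)$, and Cauchy--Schwarz gives the asserted $O(1/\sqrt N)$ rate.

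The hard part will be the measure contribution $W_2(\bar\mu^{N,i},m_{x_0(t)})$: a generic empirical-measure estimate in Wasserstein distance would yield a dimension-dependent rate that is far worse than $1/\sqrt N$, so the essential point is to avoid it by conditioning on $\mathcal{F}_t^0$ and using the product-coupling bound on $W_2^2\big(\frac{1}{N-1}\sum_{j\neq i}\delta_{\hat x_1^j},m_{x_0(t)}\big)$, whose expectation collapses to order $\frac{1}{N}\,\mathbb{E}|\hat x_1^j-\mathbb{E}^{\mathcal{F}_t^0}\hat x_1^j|^2$ since the cross terms vanish by conditional independence. This is precisely the mechanism exploited in Lemma \ref{1stestimate}, which is what recovers the sharp $1/N$ order for the squared distance and hence the $O(1/\sqrt N)$ order for the cost gap.
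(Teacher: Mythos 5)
Your proposal is correct and follows essentially the same route as the paper's proof: apply the quadratic condition (A.3) with the common control $\hat u_1^i$ to get a growth-times-distance bound, use H\"older/Cauchy--Schwarz, bound the growth factor by uniform second-moment estimates, and control the distance factor (including the $W_2$ term, via the empirical-coupling and conditional law-of-large-numbers estimates) by the $O(1/N)$ bounds of Lemma \ref{1stestimate}. Your explicit triangle-inequality step for $W_2(\bar\mu^{N,i},m_{x_0(t)})$ merely makes precise a substitution the paper performs silently between its two displayed estimates.
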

\begin{proof}
	With the quadratic assumptions (\ref{A4_0}), we have
	\begin{equation*}
	\begin{array}{rcl}
		|\mathcal{J}^{N,i}(\hat {\bf u})-\mathcal{J}^{i}(\hat u^i)|&\leq&\mathbb{E}\bigg[\displaystyle\int_0^Tf_1\Big(\hat y_1^i(t),y_0(t),\dfrac{1}{N-1}\displaystyle\sum_{j=1, j \neq i}^N\delta_{\hat y_1^j(t)},\hat u^i(t)\Big)-f_1\Big(\hat x_1^i(t),x_0(t),m_{x_0(t)},\hat u_1^i(t)\Big)dt\\
		&&\qquad\qquad+h_1\Big(\hat y_1^i(T),y_0(T),\dfrac{1}{N-1}\displaystyle\sum_{j=1, j \neq i}^N\delta_{\hat y_1^j(T)}\Big)-h_1\Big(\hat x_1^i(T),\hat x_0(T),m_{x_0(T)}\Big)\bigg]\\
		&\leq&C\mathbb{E}\bigg[\displaystyle\int_0^T \Big[1+|\hat y_1^i(t)|+|\hat x_1^i(t)|+|y_0(t)|+|x_0(t)|+\Big(\dfrac{\sum_{j=1, j \neq i}^N |\hat y_1^j(t)|^2}{N-1}\Big)^{\frac{1}{2}}+\Big(\mathbb{E}^{\mathcal{F}_t^0}|\hat x_1^i(t)|^2\Big)^{\frac{1}{2}}+2|\hat u^i(t)|\Big]\\
		&&\qquad\qquad\qquad\cdot\Big[|\hat y_1^i(t)-\hat x_1^i(t)|+|y_0(t)-x_0(t)|+W_2\Big(\dfrac{1}{N-1}\displaystyle\sum_{j=1, j \neq i}^N\delta_{\hat y_1^j(t)},m_{x_0(t)}\Big)\Big]dt\\
		&&\qquad+\Big[1+|\hat y_1^i(T)|+|\hat x_1^i(T)|+|y_0(T)|+|x_0(T)|+\Big(\dfrac{\sum_{j=1, j \neq i}^N |\hat y_1^j(T)|^2}{N-1}\Big)^{\frac{1}{2}}+\Big(\mathbb{E}^{\mathcal{F}_t^0}|\hat x_1^i(T)|^2\Big)^{\frac{1}{2}}\Big]\\
		&&\qquad\qquad\qquad\cdot\Big[|\hat y_1^i(T)-\hat x_1^i(T)|+|y_0(T)-x_0(T)|+W_2\Big(\dfrac{1}{N-1}\displaystyle\sum_{j=1, j \neq i}^N\delta_{\hat y_1^j(T)},m_{x_0(T)}\Big)\Big]\bigg].\\
	\end{array}
	\end{equation*}
	An application of H\"older's inequality, and the symmetry on $\{\hat{x}_1^j\}_j$ gives 	
	\begin{equation*}
	\begin{array}{rcl}
		|\mathcal{J}^{N,i}(\hat {\bf u})-\mathcal{J}^{i}(\hat u^i)|&\leq&C\bigg\{\bigg[\mathbb{E}\displaystyle\int_0^T \Big[1+|\hat y_1^i(t)|^2+|\hat x_1^i(t)|^2+|y_0(t)|^2+|x_0(t)|^2+\dfrac{\sum_{j=1, j \neq i}^N |\hat y_1^j(t)|^2}{N-1}+\mathbb{E}^{\mathcal{F}_t^0}|\hat x_1^i(t)|^2+|\hat u^i(t)|^2\Big]dt\bigg]^{\frac{1}{2}}\\
		&&\qquad\qquad\qquad\cdot\bigg[\mathbb{E}\displaystyle\int_0^T\Big[|\hat y_1^i(t)-\hat x_1^i(t)|^2+|y_0(t)-x_0(t)|^2+W_2^2\Big(\dfrac{1}{N-1}\displaystyle\sum_{j=1, j \neq i}^N\delta_{\hat y_1^j(t)},m_{x_0(t)}\Big)\Big]dt\bigg]^{\frac{1}{2}}\\
		&&\qquad+\bigg[\mathbb{E}\Big[1+|\hat y_1^i(T)|^2+|\hat x_1^i(T)|^2+|y_0(T)|^2+|x_0(T)|^2+\dfrac{\sum_{j=1, j \neq i}^N |\hat y_1^j(T)|^2}{N-1}+\mathbb{E}^{\mathcal{F}_t^0}|\hat x_1^i(T)|^2\Big]\bigg]^{\frac{1}{2}}\\
		&&\qquad\qquad\qquad\cdot\bigg[\mathbb{E}\Big[|\hat y_1^i(T)-\hat x_1^i(T)|^2+|y_0(T)-x_0(T)|^2+W^2_2\Big(\dfrac{1}{N-1}\displaystyle\sum_{j=1, j \neq i}^N\delta_{\hat y_1^j(T)},m_{x_0(T)}\Big)\Big]\bigg]^{\frac{1}{2}}\bigg\}\\
		&=&C\bigg\{\bigg[\mathbb{E}\displaystyle\int_0^T \Big[1+|\hat y_1^i(t)|^2+|\hat x_1^i(t)|^2+|y_0(t)|^2+|x_0(t)|^2+|\hat u^i(t)|^2\Big]dt\bigg]^{\frac{1}{2}}\\
		&&\qquad\qquad\qquad\cdot\bigg[\mathbb{E}\displaystyle\int_0^T\Big[|\hat y_1^i(t)-\hat x_1^i(t)|^2+|y_0(t)-x_0(t)|^2+W_2^2\Big(\dfrac{1}{N-1}\displaystyle\sum_{j=1, j \neq i}^N\delta_{\hat x_1^j(t)},m_{x_0(t)}\Big)\Big]dt\bigg]^{\frac{1}{2}}\\
		&&\qquad+\bigg[\mathbb{E}\Big[1+|\hat y_1^i(T)|^2+|\hat x_1^i(T)|^2+|y_0(T)|^2+|x_0(T)|^2\Big]\bigg]^{\frac{1}{2}}\\
		&&\qquad\qquad\qquad\cdot\bigg[\mathbb{E}\Big[|\hat y_1^i(T)-\hat x_1^i(T)|^2+|y_0(T)-x_0(T)|^2+W^2_2\Big(\dfrac{1}{N-1}\displaystyle\sum_{j=1, j \neq i}^N\delta_{\hat x_1^j(T)},m_{x_0(T)}\Big)\Big]\bigg]^{\frac{1}{2}}\bigg\}\\
	\end{array}
	\end{equation*}
	By the linear growth assumptions on $g_0$, $\sigma_0$, $g_1$ and $\sigma_1$, it is easy to show that 
	\begin{equation*}
		\mathbb{E}\displaystyle\int_0^T \Big[1+|\hat y_1^i(t)|^2+|\hat x_1^i(t)|^2+|y_0(t)|^2+|x_0(t)|^2+|\hat u^i(t)|^2\Big]dt
	\end{equation*}
	and
	\begin{equation*}
		\mathbb{E}\Big[1+|\hat y_1^i(T)|^2+|\hat x_1^i(T)|^2+|y_0(T)|^2+|x_0(T)|^2\Big]
	\end{equation*}
	are bounded (independent of $N$). We finally arrive at the estimates
	\begin{equation*}
	\begin{array}{rcl}
		|\mathcal{J}^{N,i}(\hat {\bf u})-\mathcal{J}^{i}(\hat u^i)|&\leq&C\bigg\{\bigg[\mathbb{E}\displaystyle\int_0^T\Big[|\hat y_1^i(t)-\hat x_1^i(t)|^2+|y_0(t)-x_0(t)|^2+W_2^2\Big(\dfrac{1}{N-1}\displaystyle\sum_{j=1, j \neq i}^N\delta_{\hat x_1^j(t)},m_{x_0(t)}\Big)\Big]dt\bigg]^{\frac{1}{2}}\\
		&&\qquad+\bigg[\mathbb{E}\Big[|\hat y_1^i(T)-\hat x_1^i(T)|^2+|y_0(T)-x_0(T)|^2+W^2_2\Big(\dfrac{1}{N-1}\displaystyle\sum_{j=1, j \neq i}^N\delta_{\hat x_1^j(T)},m_{x_0(T)}\Big)\Big]\bigg]^{\frac{1}{2}}\bigg\},\\
	\end{array}
	\end{equation*}
	which goes to $0$ as $N\rightarrow \infty$, as shown in Lemma \ref{1stestimate}. Hence
	\begin{equation*}
		|\mathcal{J}^{N,i}(\hat {\bf u})-\mathcal{J}^{i}(\hat u_1^i)|=O(\dfrac{1}{\sqrt{N}}).
	\end{equation*}
 \qed \end{proof}
In the previous lemmas, we assumed that all players adopt their corresponding mean field optimal controls. By symmetry, the convergences of state dynamics and the cost functionals are then established. To show that the mean field optimal controls ${\bf u}$ indeed constitute a $\epsilon$-Nash equilibrium on the empirical system, without loss of generality, we assume that the first player did not obey the mean field optimal control. In particular, let $u_1^1$ be an arbitrary control in $\mathcal{A}_1$, define ${\bf u}:=(u_1^1,\hat u_1^2,\dots,\hat u_1^N)$. We then have the following empirical and mean field SDEs for the dominating player, the $1$-st player and the $i$-th player ($i>1$) respectively:
\begin{equation}
\left\{
\begin{array}{rcl}
	dy_0&=&g_0\Big(y_0(t),\dfrac{1}{N}\Big(\delta_{ y_1^1(t)}+\displaystyle\sum_{j=2}^N \delta_{\hat y_1^j(t)}\Big),u_0(t)\Big)dt+\sigma_0\Big(y_0(t)\Big)dW_0(t),\\
	y_0(0)&=&\xi_0.\\
	dx_0&=&g_0\Big(x_0(t),m_{x_0(t)},u_0(t)\Big)dt+\sigma_0\Big(x_0(t)\Big)dW_0(t),\\
	x_0(0)&=&\xi_0.\\	
\end{array}
\right.
\end{equation}
\begin{equation}
\left\{
\begin{array}{rcl}
	dy_1^1&=&g_1\Big(y_1^1(t),y_0(t),\dfrac{1}{N-1}\displaystyle\sum_{j=2}^N \delta_{\hat y_1^j(t)},u_1^1(t)\Big)dt+\sigma_1\Big(y_1^1(t)\Big)dW_1^1(t),\\
	y_1^1(0)&=&\xi_1^1.\\
	dx_1^1&=&g_1\Big(x_1^1(t),x_0(t),m_{x_0(t)},u_1^1(t)\Big)dt+\sigma_1\Big(x_1^1(t)\Big)dW_1^1(t),\\
	x_1^1(0)&=&\xi_1^1.\\
\end{array}
\right.
\end{equation}
\begin{equation}
\left\{
\begin{array}{rcl}
	d\hat y_1^i&=&g_1\Big(\hat y_1^i(t),y_0(t),\dfrac{1}{N-1}\Big(\delta_{ y_1^1(t)}+\displaystyle\sum_{j=2,j\neq i}^N \delta_{\hat y_1^j(t)}\Big),\hat u_1^i(t)\Big)dt+\sigma_1\Big(\hat y_1^i(t)\Big)dW_1^i(t),\\
	\hat y_1^i(0)&=&\xi_1^i.\\
	d\hat x_1^i&=&g_1\Big(\hat x_1^i(t),x_0(t),m_{x_0(t)},\hat u_1^i(t)\Big)dt+\sigma_1\Big(\hat x_1^i(t)\Big)dW_1^i(t),\\
	\hat x_1^i(0)&=&\xi_1^i.\\
\end{array}
\right.
\end{equation}
We claim that if $m_{x_0}$ is the density function of $\hat{x}_1^i$ conditioning on $\mathcal{F}^0$, then we have the convergence $y_0\rightarrow x_0$, $y_1^1\rightarrow x_1^1$ and $\hat y_1^i \rightarrow \hat x_1^i$ in the sense of the following lemma
\begin{lem}
	\begin{equation*}
		\mathbb{E}\Big[\displaystyle\sup_{u \leq T}|y_0(u)-x_0(u)|^2\Big]+\mathbb{E}\Big[\displaystyle\sup_{u \leq T}|y_1^1(u)-x_1^1(u)|^2\Big]+\mathbb{E}\Big[\displaystyle\sup_{u \leq T}|\hat{y}_1^i(u)-\hat{x}_1^i(u)|^2\Big]\rightarrow 0, \quad \text{ as } N \rightarrow \infty.\\
	\end{equation*}
\end{lem}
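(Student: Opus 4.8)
The plan is to follow the same scheme as in the proof of Lemma \ref{1stestimate}, now accounting for the broken symmetry caused by the deviating first player. First I would write down, for each of the three pairs $(y_0,x_0)$, $(y_1^1,x_1^1)$ and $(\hat y_1^i,\hat x_1^i)$ with $i>1$, the SDE satisfied by the difference, using that each empirical/mean field pair is driven by the same Brownian motion and starts from the same initial datum, so the initial and diffusion mismatches are pure Lipschitz perturbations. Applying the Lipschitz assumption (A.1), the Cauchy--Schwarz inequality on the drift and the Burkholder--Davis--Gundy inequality on the stochastic integrals, and writing $\Delta_N(t):=\mathbb{E}\sup_{u\le t}|y_0-x_0|^2+\mathbb{E}\sup_{u\le t}|y_1^1-x_1^1|^2+\mathbb{E}\sup_{u\le t}|\hat y_1^i-\hat x_1^i|^2$, one obtains an estimate of the form
\begin{equation*}
\Delta_N(t)\le C\int_0^t\Delta_N(s)\,ds+C\int_0^t\mathbb{E}\big[\mathcal{E}_N(s)\big]\,ds,
\end{equation*}
where $\mathcal{E}_N(s)$ collects the squared Wasserstein distances between the three empirical measures appearing in the dynamics stated above and the mean field measure $m_{x_0(s)}$.

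The crux is to control $\mathbb{E}[\mathcal{E}_N(s)]$. For the dominating player's measure, write $\nu_N^y:=\tfrac1N\delta_{y_1^1}+\tfrac1N\sum_{j=2}^N\delta_{\hat y_1^j}$ and insert intermediate empirical measures via the triangle inequality:
\begin{equation*}
W_2\big(\nu_N^y,m_{x_0}\big)\le W_2\big(\nu_N^y,\nu_N^x\big)+W_2\big(\nu_N^x,\bar\nu_N^x\big)+W_2\big(\bar\nu_N^x,m_{x_0}\big),
\end{equation*}
where $\nu_N^x:=\tfrac1N\delta_{x_1^1}+\tfrac1N\sum_{j=2}^N\delta_{\hat x_1^j}$ replaces the empirical $y$-states by their mean field counterparts, and $\bar\nu_N^x:=\tfrac1N\sum_{j=1}^N\delta_{\hat x_1^j}$ further replaces the deviating first particle $x_1^1$ by the non-deviating $\hat x_1^1$. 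Using the diagonal coupling, the first term is bounded by $\tfrac1N|y_1^1-x_1^1|^2+\tfrac1N\sum_{j=2}^N|\hat y_1^j-\hat x_1^j|^2$, which after expectation and the symmetry of $\{\hat y_1^j-\hat x_1^j\}_{j\ge2}$ is absorbed into the Gronwall integral $C\int_0^t\Delta_N(s)\,ds$. The second term is bounded by $\tfrac1N|x_1^1-\hat x_1^1|^2$, whose expectation is $O(1/N)$ since $x_1^1$ and $\hat x_1^1$ solve the same mean field SDE with bounded second moments, differing only in the (square-integrable) control. The third term is exactly the quantity estimated in Lemma \ref{1stestimate}, giving $\mathbb{E}W_2^2(\bar\nu_N^x,m_{x_0})=O(1/N)$ by the conditional i.i.d.\ property of $\{\hat x_1^j\}_j$. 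The $(N-1)$-averaged measures entering the equations for $y_1^1$ and $\hat y_1^i$ are treated identically, the only change being the harmless weight $1/(N-1)$ in place of $1/N$.

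The main obstacle is precisely this asymmetry: unlike in Lemma \ref{1stestimate}, the first particle is no longer exchangeable with the others, so one cannot directly invoke symmetry to collapse the empirical-to-mean-field distance. The resolution is the observation above that the deviating particle enters every relevant empirical measure with weight $O(1/N)$, so its (possibly large but finite) discrepancy from the mean field contributes only at order $1/N$. Once $\mathbb{E}[\mathcal{E}_N(s)]=O(1/N)$ is established and the coupling terms have been absorbed on the left, Gronwall's inequality yields $\Delta_N(T)\le C e^{CT}/N\to0$, which is the claimed convergence.
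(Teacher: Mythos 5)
Your proof is correct, and its overall scheme --- a Gronwall estimate on the summed differences, a Wasserstein triangle inequality at each empirical measure, symmetry to collapse the coupling terms, and the conditional-i.i.d.\ law-of-large-numbers estimate from Lemma \ref{1stestimate} for the residuals --- is the same as the paper's. Where you genuinely differ is at the crux, the treatment of the deviating first particle. The paper removes the deviator altogether: it compares the $N$-atom empirical measure $\frac{1}{N}\big(\delta_{y_1^1}+\sum_{j\geq 2}\delta_{\hat y_1^j}\big)$ with the \emph{renormalized} $(N-1)$-atom measure $\frac{1}{N-1}\sum_{j\geq 2}\delta_{\hat y_1^j}$, via an explicit non-diagonal coupling that spreads the deviator's $1/N$ mass over the other particles, paying $\frac{1}{N}\mathbb{E}|y_1^1-\hat y_1^2|^2$ (and analogously an $(N-2)$-atom renormalization for the measure seen by the $i$-th player). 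You instead keep $N$ atoms throughout and perform two purely diagonal swaps, first replacing empirical states by their mean-field counterparts and then replacing $x_1^1$ by $\hat x_1^1$, paying $\frac{1}{N}\mathbb{E}|x_1^1-\hat x_1^1|^2$. Your route has two small advantages: the residual terms involve only mean-field states, whose second-moment bounds were already established at the end of the proof of Lemma \ref{1stestimate} (the paper's residual $\frac{1}{N}\mathbb{E}|y_1^1-\hat y_1^2|^2$ involves empirical-system states and tacitly requires a uniform-in-$N$ moment bound that the paper waves away with ``clearly goes to $0$''); and you handle all three players, including the deviator $y_1^1$, in a single Gronwall loop, whereas the paper defers the first player to ``the procedure are similar''. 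As a by-product you also obtain the explicit rate $O(1/N)$, slightly stronger than the bare convergence asserted in the lemma.
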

\begin{proof}
We first show the convergence of the dominating player and the $i$-th player. Similar to the proof of Lemma \ref{1stestimate}, we first have
	\begin{equation}
	\label{2ndestimate}
	\begin{array}{rcl}
		&&\mathbb{E}\displaystyle\sup_{u\leq t}|y_0(u)-x_0(u)|^2+\mathbb{E}\displaystyle\sup_{u\leq t}|\hat y_1^i(u)-\hat x_1^i(u)|^2\\
		&\leq&C\bigg\{t\mathbb{E}\displaystyle\int_0^t \Big|y_0(s)-x_0(s)\Big|^2+W_2^2\Big(\dfrac{1}{N}\Big(\delta_{ y_1^1(s)}+\displaystyle\sum_{j=2}^N \delta_{\hat y_1^j(s)}\Big),m_{x_0(s)}\Big)ds+\mathbb{E}\displaystyle\int_0^t\Big|y_0(s)-x_0(s)\Big|^2\bigg\}ds\\
		&&\qquad+C\bigg\{t\mathbb{E}\displaystyle\int_0^t \Big|\hat y_1^i(s)-\hat x_1^i(s)\Big|^2+\Big|y_0(s)-x_0(s)\Big|^2+W_2^2\Big(\dfrac{1}{N-1}\Big(\delta_{ y_1^1(s)}+\displaystyle\sum_{j=2,j\neq i}^N \delta_{\hat y_1^j(s)}\Big),m_{x_0(s)}\Big)ds+\mathbb{E}\displaystyle\int_0^t\Big|\hat y_1^i(s)-\hat x_1^i(s)\Big|^2\bigg\}ds\\
		&\leq&C\mathbb{E}\displaystyle\int_0^t \bigg[\displaystyle\sup_{u\leq s}\Big|y_0(u)-x_0(u)\Big|^2+\displaystyle\sup_{u\leq s}\Big|\hat y_1^i(u)-\hat x_1^i(u)\Big|^2\\
		&&\qquad+W_2^2\Big(\dfrac{1}{N}\Big(\delta_{ y_1^1(s)}+\displaystyle\sum_{j=2}^N \delta_{\hat y_1^j(s)}\Big),\dfrac{1}{N-1}\displaystyle\sum_{j=2}^N \delta_{\hat y_1^j(s)}\Big)+W_2^2\Big(\dfrac{1}{N-1}\displaystyle\sum_{j=2}^N \delta_{\hat y_1^j(s)},\dfrac{1}{N-1}\displaystyle\sum_{j=2}^N \delta_{\hat x_1^j(s)}\Big)+W_2^2\Big(\dfrac{1}{N-1}\displaystyle\sum_{j=2}^N \delta_{\hat x_1^j(s)},m_{x_0(s)}\Big)\\
		&&\qquad+W_2^2\Big(\dfrac{1}{N-1}\Big(\delta_{ y_1^1(s)}+\displaystyle\sum_{j=2,j\neq i}^N \delta_{\hat y_1^j(s)}\Big),\dfrac{1}{N-2}\displaystyle\sum_{j=2,j\neq i}^N \delta_{\hat y_1^j(s)}\Big)+W_2^2(\dfrac{1}{N-2}\displaystyle\sum_{j=2,j\neq i}^N \delta_{\hat y_1^j(s)},\dfrac{1}{N-2}\displaystyle\sum_{j=2,j\neq i}^N \delta_{\hat x_1^j(s)})\\
		&&\qquad+W_2^2\Big(\dfrac{1}{N-2}\displaystyle\sum_{j=2,j\neq i}^N \delta_{\hat x_1^j(s)},m_{x_0(s)}\Big)\bigg]ds\\
	\end{array}
	\end{equation}
	By the same argument used in Equation (\ref{W2_GW}) in Lemma \ref{1stestimate}, we have
	\begin{equation*}
		\mathbb{E}\Big[W_2^2\Big(\dfrac{1}{N-1}\displaystyle\sum_{j=2}^N \delta_{\hat y_1^j(s)},\dfrac{1}{N-1}\displaystyle\sum_{j=2}^N \delta_{\hat x_1^j(s)}\Big)\Big]+\mathbb{E}\Big[W_2^2(\dfrac{1}{N-2}\displaystyle\sum_{j=2,j\neq i}^N \delta_{\hat y_1^j(s)},\dfrac{1}{N-2}\displaystyle\sum_{j=2,j\neq i}^N \delta_{\hat x_1^j(s)})\Big]\leq 2\mathbb{E}|\hat y_1^i(s)-\hat x_1^i|^2.
	\end{equation*}
	Hence, by applying Gronwall's inequality on Equation (\ref{2ndestimate}), we have 
	\begin{equation}
	\label{3rdestimate}
	\begin{array}{rcl}
		&&\mathbb{E}\displaystyle\sup_{u\leq t}|y_0(u)-x_0(u)|^2+\mathbb{E}\displaystyle\sup_{u\leq t}|\hat y_1^i(u)-\hat x_1^i(u)|^2\\
		&\leq&Ce^{Ct}\mathbb{E}\displaystyle\int_0^t \bigg[W_2^2\Big(\dfrac{1}{N}\Big(\delta_{ y_1^1(s)}+\displaystyle\sum_{j=2}^N \delta_{\hat y_1^j(s)}\Big),\dfrac{1}{N-1}\displaystyle\sum_{j=2}^N \delta_{\hat y_1^j(s)}\Big)+W_2^2\Big(\dfrac{1}{N-1}\displaystyle\sum_{j=2}^N \delta_{\hat x_1^j(s)},m_{x_0(s)}\Big)\\
		&&\qquad+W_2^2\Big(\dfrac{1}{N-1}\Big(\delta_{ y_1^1(s)}+\displaystyle\sum_{j=2,j\neq i}^N \delta_{\hat y_1^j(s)}\Big),\dfrac{1}{N-2}\displaystyle\sum_{j=2,j\neq i}^N \delta_{\hat y_1^j(s)}\Big)+W_2^2\Big(\dfrac{1}{N-2}\displaystyle\sum_{j=2,j\neq i}^N \delta_{\hat x_1^j(s)},m_{x_0(s)}\Big)\bigg]ds\\
	\end{array}
	\end{equation} 
	For the first term in (\ref{3rdestimate}), consider the following joint measure on $\mathbb{R}^{n_1}\times \mathbb{R}^{n_1}$
	\begin{equation*}
	\begin{split}
	\mu(x,y)&=\dfrac{1}{N}\displaystyle\sum_{j=2}^N\delta_{(\hat y_1^j(s),\hat y_1^j(s))}(x,y)+\dfrac{1}{N(N-1)}\displaystyle\sum_{j=2}^N\delta_{(y_1^1(s),\hat y_1^j(s))}(x,y),\\
	\end{split}
	\end{equation*}		
which has respective marginals
	\begin{equation*}
		\dfrac{1}{N}\Big(\delta_{ y_1^1(s)}+\displaystyle\sum_{j=2}^N \delta_{\hat y_1^j(s)}\Big)\quad\text{and}\quad \dfrac{1}{N-1}\displaystyle\sum_{j=2}^N \delta_{\hat y_1^j(s)}.
	\end{equation*}
By the definition of Wasserstein metric,
	\begin{equation*}
	\begin{array}{rcl}
		&&\mathbb{E}\Big[W_2^2\Big(\dfrac{1}{N}\Big(\delta_{ y_1^1(s)}+\displaystyle\sum_{j=2}^N \delta_{\hat y_1^j(s)}\Big),\dfrac{1}{N-1}\displaystyle\sum_{j=2}^N \delta_{\hat y_1^j(s)}\Big)\Big]\\
		&\leq&\mathbb{E}\Big[\displaystyle\int_{\mathbb{R}^{n_1}\times\mathbb{R}^{n_1}}|x-y|^2d\mu(x,y)\Big]\\
		&=&\mathbb{E}\Big[\dfrac{1}{N(N-1)}\displaystyle\sum_{j=2}^N|y_1^1(s)-\hat y_1^j(s)|^2\Big]\\
		&=&\dfrac{1}{N}\mathbb{E}|y_1^1(s)-\hat y_1^2(s)|^2,
	\end{array}
	\end{equation*}
	where the last equality results from symmetry on $\{y_1^j\}_j$, clearly goes to $0$ as $N\rightarrow \infty$. Similar argument applies for the third term in (\ref{3rdestimate}). For the convergence of the second and the forth term, we refer to the argument in the last part of Lemma \ref{1stestimate} and the results follow.
	
	For the convergence of the $1$-st player, the procedure are similar and we do not provide here.
\qed \end{proof}

We conclude from the similar procedures to show the convergence of the cost functional. In particular, we have
	\begin{equation*}
		|\mathcal{J}^{N,1}({\bf u})-\mathcal{J}^{1}(u_1^1)|=O(\dfrac{1}{\sqrt{N}}).
	\end{equation*}
\begin{thm}
	$\hat {\bf u}$ is an $\epsilon$-Nash equilibrium.
\end{thm}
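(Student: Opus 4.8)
The plan is to assemble the $\epsilon$-Nash property from the two cost approximations already established, together with the mean-field optimality of each $\hat u_1^i$. By definition, $\hat{\bf u}=(\hat u_1^1,\dots,\hat u_1^N)$ is an $\epsilon$-Nash equilibrium for the empirical system if for every index $i$ and every admissible deviation $u_1^i\in\mathcal{A}_1$,
\begin{equation*}
\mathcal{J}^{N,i}(\hat{\bf u})\leq \mathcal{J}^{N,i}(\hat u_1^1,\dots,u_1^i,\dots,\hat u_1^N)+\epsilon_N,
\end{equation*}
with $\epsilon_N\to 0$ as $N\to\infty$. Since the representative agents are exchangeable, it suffices to treat $i=1$, which is exactly the configuration ${\bf u}=(u_1^1,\hat u_1^2,\dots,\hat u_1^N)$ analysed in the deviation lemma above.

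First I would invoke the all-optimal approximation $\mathcal{J}^{N,1}(\hat{\bf u})=\mathcal{J}^{1}(\hat u_1^1)+O(1/\sqrt N)$. Next, observe that $\hat u_1^1$ minimises the limiting functional $u\mapsto\mathcal{J}^{1}(u)$: this is precisely Problem~\ref{P1} with the exogenous measure fixed at the equilibrium flow $m_{x_0}$ of Problem~\ref{P2}, and a single agent's deviation leaves $m_{x_0}$ unchanged in the limit. Hence $\mathcal{J}^{1}(\hat u_1^1)\leq \mathcal{J}^{1}(u_1^1)$ for every $u_1^1\in\mathcal{A}_1$. Finally the deviation approximation $|\mathcal{J}^{N,1}({\bf u})-\mathcal{J}^{1}(u_1^1)|=O(1/\sqrt N)$ returns us to the empirical functional. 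Chaining the three facts,
\begin{equation*}
\mathcal{J}^{N,1}(\hat{\bf u})=\mathcal{J}^{1}(\hat u_1^1)+O(1/\sqrt N)\leq \mathcal{J}^{1}(u_1^1)+O(1/\sqrt N)=\mathcal{J}^{N,1}({\bf u})+O(1/\sqrt N),
\end{equation*}
so one may take $\epsilon_N=O(1/\sqrt N)$, and exchangeability extends the bound to every $i$.

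The quantitatively hard work is already contained in the two preceding lemmas: the propagation-of-chaos estimate bounding $\mathbb{E}[W_2^2(\tfrac1N\sum_j\delta_{\hat x_1^j},m_{x_0})]$ by $O(1/N)$ through the conditional independence of $\{\hat x_1^j\}_j$ given $\mathcal{F}^0_t$, and the H\"older plus linear-growth bound on the cost differences. The one point demanding care is the uniformity of $\epsilon_N$ over the deviating control: the constants in the deviation estimate involve the second moments of the trajectories, which in turn depend on $\mathbb{E}\int_0^T|u_1^1|^2\,dt$. This is the main obstacle, and it is resolved by the standard device of considering only improving deviations---if $\mathcal{J}^{N,1}({\bf u})>\mathcal{J}^{N,1}(\hat{\bf u})$ the required inequality is trivial---so that $\mathcal{J}^{N,1}({\bf u})$ is bounded above by $\mathcal{J}^{N,1}(\hat{\bf u})$, and the coercivity of the cost in the control then forces a uniform $\mathcal{L}^2$-bound on $u_1^1$, hence uniform moment control on the states and a rate $\epsilon_N$ depending on $N$ alone. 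Once this uniformity is secured, the three-line chain above closes the argument.
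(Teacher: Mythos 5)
Your proof follows exactly the paper's argument: it chains the all-optimal approximation $|\mathcal{J}^{N,1}(\hat {\bf u})-\mathcal{J}^{1}(\hat u_1^1)|=O(1/\sqrt N)$, the limiting optimality $\mathcal{J}^{1}(\hat u_1^1)\leq \mathcal{J}^{1}(u_1^1)$, and the deviation approximation $|\mathcal{J}^{N,1}({\bf u})-\mathcal{J}^{1}(u_1^1)|=O(1/\sqrt N)$, then appeals to exchangeability, which is precisely how the paper concludes. Your closing remark about uniformity of $\epsilon_N$ over the deviating control (handled by restricting to improving deviations and invoking coercivity to get a uniform $\mathcal{L}^2$-bound on $u_1^1$) is a legitimate strengthening of a point the paper's proof leaves implicit, but it does not change the route.
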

\begin{proof}
	Summarizing all the obtained results in this section, we can conclude 
	\begin{equation*}
	\begin{array}{rcl}
		&&|\mathcal{J}^{N,1}(\hat {\bf u})-\mathcal{J}^{1}(\hat u_1^1)|=O(\dfrac{1}{\sqrt N});\\
		&&|\mathcal{J}^{N,1}({\bf u})-\mathcal{J}^{1}( u_1^1)|=O(\dfrac{1}{\sqrt N}).\\
	\end{array}
	\end{equation*}
	Since $\hat u_1^1$ is optimal control, we have $\mathcal{J}^{1}(\hat u_1^1) \leq \mathcal{J}^{1}( u_1^1)$. We deduce
	\begin{equation*}
		\mathcal{J}^{N,i}(\hat {\bf u}) \leq \mathcal{J}^{N,1}({\bf u})+O(\dfrac{1}{\sqrt N}).
	\end{equation*}
	Hence, $\hat {\bf u}$ is an $\epsilon$-Nash equilibrium.
 \qed \end{proof}


\begin{thebibliography}{99}
\bibitem{AB} A. Bensoussan, K.C.J. Sung, S.C.P. Yam and S.P. Yung. Linear-Quadratic Mean Field Games. \textit{Submitted}, 2011.

\bibitem{BF1} A. Bensoussan and J. Frehse. Stochastic Games for $N$ players. \textit{Journal of Optimization Theory and Applications}, \textbf{105}(3), 543-565, 2000.

\bibitem{BF2} A. Bensoussan and J. Frehse. On Diagonal Elliptic and Parabolic Systems with Super-quadratic Hamiltonians. \textit{Communications on Pure and Applied Analysis}, \textbf{8}, 83-94, 2009.

\bibitem{BFV}  A. Bensoussan, J. Frehse and J. Vogelgesang. Systems of Bellman Equations to Stochastic Differential Games with Non-compact Coupling. \textit{Discrete and Continuous Dynamical Systems}, \textbf{27}(4), 1375-1389, 2010.

\bibitem{BFY}  A. Bensoussan, J. Frehse and S.C.P. Yam. Mean Field Games and Mean Field Type Control Theory. \textit{Springer Briefs in Mathematics}, 2013.

\bibitem{BL} C. Le Bris and P.L. Lions. Existence and uniqueness of solutions to Fokker-Planck type equations with irregular coefficients. \textit{Communications in Partial Differential Equations}, \textbf{33} (7), 1272-1317, 2008.


\bibitem{CAR_PROB} R. Carmona and F. Delarue. Probabilistic Analysis of Mean-Field Games. \textit{SIAM Journal on Control and Optimization} \textbf{51} (4), 2705-2734, 2013.

\bibitem{ORIGINCRISIS} G. Cooper. The Origin of Financial Crises: Central Banks, Credit Bubbles, and the Efficient Market Fallacy. \textit{Harriman House Limited}, 2008.


\bibitem{H0} M. Huang. Large-Population LQG Games Involving a Major Player: The Nash Certainty Equivalence Principle. \textit{SIAM Journal on Control and Optimization} \textbf{48} (5) 3318-3353, 2010.

\bibitem{HCM1} M. Huang, P.E. Caines and R.P. Malham\'{e}. Individual and Mass Behaviour in Large Population Stochastic Wireless Power Control Problems: Centralized and Nash Equilibrium Solutions. \textit{Proceedings of the 42nd IEEE Conference on Decision and Control, Maui, Hawaii, December 2003}, 98 - 103, 2003.

\bibitem{HCM2} M. Huang, R.P. Malham\'{e} and P.E. Caines. Large Population Stochastic Dynamic Games: Closed-loop McKean-Vlasov Systems and the Nash Certainty Equivalence Principle. \textit{Communications in Information and Systems}, \textit{6} (3), 221-252, 2006.

\bibitem{LL1} J.M. Lasry and P.L. Lions. Jeux \'{a} champ moyen I - Le cas stationnaire. \textit{Comptes Rendus de l'Acad\'{e}mie des Sciences, Series I}, \textbf{343}, 619-625, 2006a.

\bibitem{LL2} J.M. Lasry and P.L. Lions. Jeux \'{a} champ moyen II. Horizon fini et contr\^{o}le optimal. \textit{Comptes Rendus de l'Acad\'{e}mie des Sciences, Series I}, \textbf{343}, 679-684, 2006b.

\bibitem{LL3} J.M. Lasry and P.L. Lions. Mean Field Games. \textit{Japanese Journal of Mathematics} \textbf{2}(1), 229-260, 2007.

\bibitem{FBSDE} J. MA and J. YONG. Forward-Backward Stochastic Differential Equations and Their Applications. \textit{Lecture Notes in Mathematics 1702, Springer}, 1999.

\bibitem{HN1} S.L. Nguyen and M. Huang.  Linear-Quadratic-Gaussian Mixed Games With Continuum-Parametrized Minor Players. \textit{SIAM Journal on Control and Optimization} \textbf{50} (5), 2907-2937, 2012.

\bibitem{HN2} S.L. Nguyen and M. Huang. Mean Field LQG Games with Mass Behavior Responsive to A Major Player. \textit{51st IEEE Conference on Decision and Control}, 2012.

\bibitem{NC} M. Nourian and P.E. Caines. $\epsilon$-Nash Mean Field Game Theory for Nonlinear Stochastic Dynamical Systems with Major and Minor Agents. \textit{SIAM Journal on Control and Optimization} \textbf{51} (4), 3302-3331, 2013.

\bibitem{SHJB} S. Peng. Stochastic Hamilton-Jacbi-Bellman Equations. \textit{SIAM Journal on Control and Optimization} \textbf{30} (2), 284-304, 1992.

\bibitem{PZ} S. Peng and Z. Yang. Anticipated backward stochastic differential equation. \textit{Annals of Probability},  \textbf{37}(3), 877-902, 2009.

\bibitem{villani} C. Villani. Forward-Backward Stochastic Differential Equations and Their Applications. \textit{Lecture Notes in Mathematics 1702, Springer}, 1999.




\end{thebibliography}
\end{document}